\documentclass[a4paper]{article}
\usepackage{latexsym,bm}
\usepackage{amssymb,amsmath,amsthm}
\usepackage[english,german]{babel}
\usepackage[colorlinks=true]{hyperref}
\usepackage{color}
\usepackage{times}
\allowdisplaybreaks[4]

\newtheorem{theorem}{Theorem}[section]
\newtheorem{lemma}[theorem]{Lemma}

\newtheorem{remark}[theorem]{Remark}
\newtheorem{proposition}[theorem]{Proposition}

\newtheorem{corollary}[theorem]{Corollary}
\numberwithin{equation}{section}
\hypersetup{linkcolor=blue,urlcolor=red,citecolor=red}
\title{Observability from measurable sets for strongly coupled parabolic systems via single-component observation\thanks{This work was partially supported by the National Key R\&D Program of China under grant 2024YFA1012802, the New Cornerstone Science Foundation, and by the National Natural Science Foundation of China  under grants 12471424, 12371450, 12571483.}}
\author{Xiaoyu Fu\thanks{School of Mathematics, Sichuan University, Chengdu, 610064, China (xiaoyufu@scu.edu.cn).}\and Gengsheng Wang\thanks{Center for Applied Mathematics, Tianjin University, Tianjin, 300072, China (wanggs@yeah.net).}\and Huaiqiang Yu\thanks{School of Mathematics, Tianjin University, Tianjin, 300354, China (huaiqiangyu@yeah.net).}
\and Xiaomin Zhu\thanks{Civil Aviation Flight University of China, Deyang, 618307, China (xiaominzhu123@yeah.net).}}
\date{}
\begin{document}
\selectlanguage{english}
\maketitle
\begin{abstract}

We establish an observability inequality from space-time measurable sets for a class of strongly coupled parabolic systems consisting of two equations, where the observation acts on a single-component. The model is motivated by parabolic equations with complex coefficients and serves as a prototypical example of strongly coupled systems.
The main difficulty lies in the fact that, unlike in the scalar and weakly coupled cases, pointwise-in-time interpolation observability estimates fail, as the observed component may exhibit high-frequency oscillatory cancellations induced by the coupling. To overcome this difficulty, we develop a new integral-type interpolation observability inequality based on a Remez-type inequality.
With the aid of this integral-type interpolation observability inequality and the strategy developed in \cite{PW, Luis-Wang-Zhang} for deriving observability from measurable sets, we obtain the desired observability inequality.
\end{abstract}

{\bf Keywords.}
Strongly coupled  parabolic systems,  single-component observation, observability from measurable sets,
 bang-bang property of time-optimal control.

\vskip 5pt
{\bf AMS subject classifications.} 35K40, 93B07, 93C20, 49J20

\section{Introduction}\label{yu-sect-intro}

Let  $\Omega$  be a bounded Lipschitz, locally star-shaped domain
 in $\mathbb{R}^d$ with boundary $\partial\Omega$. Let  $T>0$, and let $a, b\in \mathbb{R}$  satisfy
$a>0$ and $b\neq 0$. We consider the following strongly coupled parabolic system:
 \begin{equation}\label{yu-8-28-1}
\begin{cases}
    \partial_ty_1=a\Delta y_1-b\Delta y_2&\mbox{in}\;\;\Omega\times(0,T),\\
    \partial_ty_2=b\Delta y_1+a\Delta y_2 &\mbox{in}\;\;\Omega\times(0,T),\\
     y_1=y_2=0&\mbox{on}\;\;\partial\Omega\times(0,T).
\end{cases}
\end{equation}
    System \eqref{yu-8-28-1} admits a unique solution in $C([0,T];L^2(\Omega;\mathbb{R}^2))$ for each initial state $y_0:=(y_1(0),y_2(0))^\top\in L^2(\Omega;\mathbb{R}^2)$.
     We denote this solution by $y(\cdot,\cdot;y_0):=(y_1(\cdot,\cdot;y_0),y_2(\cdot,\cdot;y_0))^\top$, viewed  as a function from $\Omega\times[0, T]$ to $\mathbb{R}^2$, or  by  $y(\cdot;y_0):=(y_1(\cdot;y_0),y_2(\cdot;y_0))^\top$, regarded as a function from $[0, T]$ to $L^2(\Omega;\mathbb{R}^2)$.

 The aim of this paper is to establish the following observability inequality:
\begin{equation}\label{yu-8-28-2}
\|y(T;y_0)\|_{L^2(\Omega;\mathbb{R}^2)}\leq N\int_{\Omega\times(0,T)}\chi_{\mathcal{D}}(x,t)|y_1(x,t;y_0)|dxdt
\quad\text{for all } y_0\in L^2(\Omega;\mathbb{R}^2),
\end{equation}
 where $\mathcal{D}\subset\Omega\times(0,T)$ is a set of positive measure, and  $N:=N(a,b,T,\mathcal{D})>0$ is a constant.
 Inequality \eqref{yu-8-28-2} allows one to quantitatively reconstruct the full system state from a single-component observation over a space-time measurable subset.

One  motivation for studying \eqref{yu-8-28-2} arises from parabolic equations with complex coefficients.
 A typical example is
  the linearized Ginzburg-Landau equation around the steady state $0$:
\begin{equation}\label{yu-8-20-1}
\begin{cases}
     z_t=(a+\mathrm{i}b)\Delta_{\mathbb{C}} z&\mbox{in}\;\;\Omega\times(0,T),\\
    z=0&\mbox{on}\;\;\partial\Omega\times(0,T),
\end{cases}
\end{equation}
   where $\Delta_{\mathbb{C}}$ denotes the complex Laplacian
    on $L^2(\Omega;\mathbb{C})$.
    For this equation, observability inequality \eqref{yu-8-28-2} takes the following form:
\begin{equation*}\label{yu-8-20-3}
    \|z(T;z_0)\|_{L^2(\Omega;\mathbb{C})}\leq N\int_{\Omega\times(0,T)}\chi_{\mathcal{D}}|\textsf{Re}\,z(x,t;z_0)|dxdt
    \;\;\mbox{for all}\;\;z_0\in L^2(\Omega;\mathbb{C}),
\end{equation*}
where $z(\cdot;z_0)$ denotes the solution corresponding to the initial state $z_0$.
 Another important  motivation for studying (\ref{yu-8-28-2}) comes from an open problem raised in \cite{EMZ}:  \emph{can strongly coupled parabolic systems be observed from space-time
measurable sets under partial (single-component) observation?}

Observability inequalities on space-time measurable sets have been studied for scalar parabolic equations, largely motivated by time-optimal control problems (see \cite{Luis-Wang-Zhang, PW, Phung-Wang-Zhang} and references therein).

For coupled parabolic systems in which the number of observed components is less than the number of state components, existing observability results fall into two categories. The first concerns  strongly coupled systems, but the observation domains  are cylindrical sets of the form
$\omega\times(0,T)$, where  $\omega$ is an open subset of $\Omega$
 (see \cite{FGT1, Burgos-Garcia, G, Lissy-Zuazua} and references therein).  The second category deals with weakly coupled systems, i.e., systems whose principal-part operators are uncoupled (see \cite{Ammar-jmpa, AmmarSurvey, EMZ, FGT, Qin-Wang-Yu, Wang-Yan-Yu}). To the best of our knowledge,
   observability from general measurable sets for  strongly coupled parabolic systems under single-component
    observation has not been established by any existing method. The present work provides the first positive result in this setting.

The observability of system \eqref{yu-8-28-1} under single-component observation is significantly more challenging than its scalar counterpart, due to two main difficulties:
\begin{itemize}
  \item For the observability of system \eqref{yu-8-28-1} under single-component observation, the observed component may exhibit cancellations of high-frequency oscillations, which may cause the pointwise-in-time observation signal to vanish. Consequently, the single-time-point interpolation observability inequality used in \cite{Luis-Wang-Zhang, EMZ, PW, Phung-Wang-Zhang}, which is crucial for scalar parabolic equations with observations on measurable sets, does not hold, as shown in Proposition \ref{yu-proposition-9-2-1}. Even the multi-time-point interpolation observability inequality, established in \cite{Wang-Yan-Yu} for weakly coupled parabolic systems, may fail, as demonstrated in Proposition \ref{yu-proposition-4-8-1}. This difficulty stems from the intrinsically strong coupling structure of system \eqref{yu-8-28-1}.
To overcome this difficulty, we establish an integral-type interpolation observability inequality with the aid of a Remez-type inequality, rather than relying on pointwise-in-time interpolation observability inequalities.

  \item  When  $\mathcal{D}=\omega\times(0,T)$ with $\omega\subset\Omega$
 an open subset,
inequality \eqref{yu-8-28-2} was established in \cite{Lissy-Zuazua}.
Their approach combines the Lebeau-Robbiano inequality with an observability estimate for linear ODE systems, which requires the observability constant  $C(T)$
 to have a specific dependence on $T$.
However, this approach does not apply to our setting, as $\mathcal{D}\subset \Omega\times(0,T)$
 is only a measurable set of positive measure.
Instead, we derive inequality \eqref{yu-8-28-2} by combining the strategy for observability from measurable sets developed in \cite{PW, Luis-Wang-Zhang} with the integral-type interpolation observability inequality mentioned above.

 \end{itemize}

Our main result is as follows.

\begin{theorem}\label{yu-theorem-8-18-2}
Let $\mathcal{D}\subset\Omega\times(0,T)$ be a set of positive measure.
 There exists a constant
 $N=N(a, b, T, \mathcal{D})>0$ such that the observability inequality \eqref{yu-8-28-2} holds.
\end{theorem}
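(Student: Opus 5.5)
We will work with the complex reformulation. Writing $z=y_1+\mathrm{i}y_2$, system \eqref{yu-8-28-1} becomes \eqref{yu-8-20-1}, so $z(t)=\sum_j e^{-(a+\mathrm{i}b)\lambda_j t}\langle z_0,e_j\rangle e_j$, where $\{(\lambda_j,e_j)\}$ are the Dirichlet eigenpairs of $-\Delta$ on $\Omega$. The observed component is then
\[
y_1(x,t)=\textsf{Re}\,z(x,t)=\sum_j e^{-a\lambda_j t}\bigl(\alpha_j\cos(b\lambda_j t)+\beta_j\sin(b\lambda_j t)\bigr)e_j(x),
\]
which is real-analytic in $t>0$ and, after localization, in $x$. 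The proof follows the telescoping strategy of \cite{PW, Luis-Wang-Zhang}. Pointwise interpolation fails, so the key intermediate result we aim for is an \emph{integral-type} interpolation inequality. For a measurable $\mathcal{D}$, a.e. time slice $\mathcal{D}_t=\{x:(x,t)\in\mathcal{D}\}$ is measurable, and by Fubini the set $E=\{t:|\mathcal{D}_t|\ge \gamma\}$ has positive measure for some $\gamma>0$. Fix a Lebesgue density point $\ell$ of $E$ and, via the standard lemma of \cite{PW}, a decreasing sequence $l_{m}\downarrow \ell$ such that $|E\cap(l_{m+1},l_m)|\ge \rho(l_m-l_{m+1})$ and the ratios $(l_m-l_{m+1})/(l_{m+1}-l_{m+2})$ are bounded.

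\textbf{Step 1 (interpolation on a time interval).} Let $0<s<\tau$, let $I\subset(s,\tau)$ be measurable with $|I|\ge\rho(\tau-s)$, and suppose $|\mathcal{D}_t|\ge\gamma$ for $t\in I$. We aim to show
\[
\|z(\tau)\|\le \Bigl(Ne^{N/(\tau-s)}\int_I\int_{\mathcal{D}_t}|y_1|\,dx\,dt\Bigr)^{\theta}\|z(s)\|^{1-\theta}.
\]
To prove it, first reduce to a low-frequency part $\lambda_j\le\Lambda$ by decay: the high modes contribute at most $e^{-a\Lambda(\tau-s)}\|z(s)\|$. Then do two things.
\begin{enumerate}
\item[(i)] For each fixed $t$, apply the spectral inequality on measurable sets (propagation of smallness for analytic functions, as in \cite{Luis-Wang-Zhang}) to the \emph{real} finite sum $y_1^{\Lambda}(\cdot,t)$. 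This gives $\|y_1^{\Lambda}(t)\|_{L^2(\Omega)}\le Ne^{N\sqrt\Lambda}\int_{\mathcal{D}_t}|y_1^{\Lambda}(t)|$.
\item[(ii)] In time, regard $t\mapsto \|y_1^{\Lambda}(t)\|^2$, or better $t\mapsto\langle y_1^\Lambda(t),e_j\rangle$, which is an exponential polynomial with frequencies $(-a\pm \mathrm{i}b)\lambda_j$. Apply a Remez-type inequality for exponential sums (Nazarov/Turán type) on $(s,\tau)$. This bounds $\sup_{(s,\tau)}$ by $(N/\rho)^{N\Lambda(\tau-s)+\#}\int_I$.
\end{enumerate}
Combining (i) and (ii), the full $L^2$ norm over $(s,\tau)$ of $y_1^{\Lambda}$ is controlled by its integral over $I$. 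Next we recover $z^\Lambda$ from $\textsf{Re}\,z^\Lambda$ on an interval. The map from $(\alpha_j,\beta_j)$ to $e^{-a\lambda_jt}(\alpha_j\cos+\beta_j\sin)$ on an interval of length $\tau-s$ is injective, with a constant of the form $e^{N\Lambda(\tau-s)}/\min(1,(\tau-s)b\lambda_1)$. Finally, optimize $\Lambda$ to get the Hölder form.

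\textbf{Step 2 (telescoping).} Apply Step 1 on $(l_{m+1},l_m)$ with $I=E\cap(l_{m+1},l_m)$. Young's inequality gives
\[
\|z(l_m)\|\le \varepsilon\|z(l_{m+1})\|+N\varepsilon^{-\kappa}e^{N/(l_m-l_{m+1})}\int_{(l_{m+1},l_m)}\!\!\int\chi_{\mathcal{D}}|y_1|.
\]
Choose $\varepsilon=e^{-1/(l_m-l_{m+1})}$ suitably, multiply by weights, and sum the telescoping series exactly as in \cite{PW}, using the bounded-ratio property. This yields $\|z(l_1)\|\le N\int\chi_{\mathcal{D}}|y_1|$. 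Since $\|z(T)\|\le\|z(l_1)\|$ by contractivity ($a>0$), \eqref{yu-8-28-2} follows.

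\textbf{Main obstacle.} I expect the main obstacle to be Step 1(ii) together with the reconstruction. The Remez inequality must handle exponential sums with up to $2\#\{\lambda_j\le\Lambda\}\sim\Lambda^{d/2}$ terms. Its constant grows like $(N/\rho)^{\#\text{terms}}$, which is worse than $e^{N\sqrt\Lambda}$ and would spoil the optimization. To fix this, I would first try Nazarov's sharp Turán lemma. Its exponent depends on the number of terms but is multiplied by the frequency width $\max|\text{freq}|(\tau-s)\sim\Lambda(\tau-s)$. I would then check that $e^{N\Lambda(\tau-s)}\cdot (N/\rho)^{\Lambda^{d/2}}$ can still be absorbed against $e^{-a\Lambda(\tau-s)}$. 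If not, I would instead apply Remez jointly in $(x,t)$ to the analytic function $y_1$ on $\mathcal{D}$ directly, using analyticity estimates of the type in \cite{Luis-Wang-Zhang}.
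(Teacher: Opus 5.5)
Your overall architecture (localize $\mathcal{D}$, apply the spectral inequality on each slice $\mathcal{D}_t$, prove an integral-type interpolation inequality on $(\ell_{m+1},\ell_m)$, then telescope as in \cite{PW, Luis-Wang-Zhang}) is the paper's. However, Step 1(ii), where the strong coupling has to be beaten, is left unresolved, and the route you propose for it fails. A Remez/Tur\'an inequality applied to the whole exponential sum $t\mapsto y_1^{\Lambda}(t)$ (or to $t\mapsto\|y_1^{\Lambda}(t)\|^2$) costs a factor $(C/\rho)^{c\,n}$, where $n\gtrsim k_\Lambda\sim M\Lambda^{d/2}$ is the number of exponentials. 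Nazarov's lemma does not change this: its exponent is still $n-1$, and the factor $e^{|J|\max_k|\textsf{Re}\,\mu_k|}$ comes on top. For $d\ge2$ this cost is at least linear in $\Lambda$, with a coefficient that does not shrink with $\tau-s$. So it cannot be absorbed by $e^{-a\Lambda(\tau-s)}$ on the short intervals used in the telescoping, and for $d\ge3$ it cannot be absorbed at all. At best you get an interpolation inequality whose exponent degenerates as $\tau-s\to0$, whereas the telescoping needs one fixed exponent (the paper fixes $\theta=1-(1+\beta)^{-1}$ and then $\mu=\sqrt{(\beta+2)/(\beta+1)}$). Your fallback, a joint Remez inequality in $(x,t)$, is not an argument.

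The missing idea is already in your parenthetical: apply the Remez inequality one mode at a time, where it concerns a trigonometric polynomial of degree one, and recombine the modes at polynomial cost. Write $v_j(t):=\langle y_1^\Lambda(t),e_j\rangle=e^{-a\lambda_jt}r_j\sin(b\lambda_jt+\delta_j)$ with $r_j=(\alpha_j^2+\beta_j^2)^{1/2}$. For $t\in I\subset(s,\tau)$, use $e^{-a\lambda_jt}\ge e^{-a\lambda_j\tau}$, so that you bound $|z_j(\tau)|=e^{-a\lambda_j\tau}r_j$ directly; no separate reconstruction step is needed. Then rescale $\sigma=|b|\lambda_jt$. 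The rescaled set has measure $|b|\lambda_j|I|$ and meets at most $|b|\lambda_j(\tau-s)/(2\pi)+2$ periods; choose the $2\pi$-period carrying its largest part and apply Lemma \ref{a4} to $\sin$ there. This yields $\int_I|v_j|\,dt\ge c\,\lambda_j^{-1}(1+\tau-s)^{-4}|I|^4|z_j(\tau)|$ with $c=c(b,\lambda_1)>0$. Since $|v_j(t)|\le\|y_1^\Lambda(t)\|_{L^2(\Omega)}$ for every $t$, Cauchy--Schwarz over $j\le k_\Lambda$ costs only $\sqrt{k_\Lambda}\lesssim(1+\Lambda)^{d/4}$. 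This gives $\|z^\Lambda(\tau)\|\le C\Lambda(1+\Lambda)^{d/4}(1+\tau-s)^4|I|^{-4}\int_I\|y_1^\Lambda(t)\|_{L^2(\Omega)}\,dt$; combined with (i), the low-frequency cost is $e^{C\sqrt\Lambda}$, as the optimization requires.

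Two smaller points also need repair. First, you discard high frequencies only from $\|z(\tau)\|$, but the observation contains $y_1^{>\Lambda}$ as well. Bounding its contribution by $e^{-a\Lambda\delta}\|z(s)\|$ requires $I$ to stay at distance $\delta>0$ from the reference time. This fails for $I=E\cap(\ell_{m+1},\ell_m)$ with reference state $z(\ell_{m+1})$. That is why the paper uses the reference time $\ell_{m+2}$, so that $S_1=\ell_{m+1}-\ell_{m+2}$ is comparable to $\ell_m-\ell_{m+1}$, and telescopes over every other index. Second, a constant in (i) that is uniform in $t\in I$ requires first localizing $\mathcal{D}$ into $B_R^d(x_0)\times(0,T)$ with $B_{4R}^d(x_0)\subset\Omega$ before defining $E$, as in Corollary \ref{yu-corollary-8-19-1}.
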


\begin{remark}\label{yu-remark-8-28-1}
   Several comments on Theorem \ref{yu-theorem-8-18-2} are in order.
\begin{enumerate}

  \item [$(i)$] In Theorem \ref{yu-theorem-8-18-2}, we only consider the case where the observer acts on the first component of system \eqref{yu-8-28-1}. The same conclusion holds when the observer is imposed on the second component, as shown in Proposition \ref{yu-proposition-9-8-1} of Section \ref{yu-sec-4}.
  \item [$(ii)$] Two extreme cases are noteworthy:
\begin{enumerate}
  \item [$(1)$] If $a=0$ and $b\neq 0$,
  system \eqref{yu-8-28-1} reduces to the  Schr\"{o}dinger equation. Our observability problem then reduces to recovering the full state of the system by observing only the real part of the solution on a measurable space-time set.
Unfortunately, this problem remains open even when  $\mathcal{D}=\omega\times(0,T)$ with $\omega\subset\Omega$
 an open subset. For related results, we refer the reader to \cite{Araruna-Cerpa-Mercado-Santos, Wang-Wang-Zhang} and references therein.

  \item [$(2)$] If $a\neq 0$ and $b=0$,  system \eqref{yu-8-28-1} becomes decoupled.
Consequently, the state of the system cannot be recovered by observing only one component of the solution.

\end{enumerate}
      \item [$(iii)$] As an application of Theorem \ref{yu-theorem-8-18-2}, we establish the bang-bang property for the time-optimal control problem studied in Section \ref{yu-sec-5}.
While the idea of the proof originates from \cite[Theorem 1.2]{Wang2009SICON}, it involves certain technical difficulties that were not emphasized in \cite{Luis-Wang-Zhang}, where the proof was omitted.

\end{enumerate}
\end{remark}

The rest of the paper is organized as follows.
Section~\ref{sec-yu-2} collects the notation  and several preliminary results.
Section~\ref{yu-sec-3} establishes refined integral-type interpolation inequalities for strongly coupled parabolic systems.
Section~\ref{yu-sec-4} is devoted to the proof of Theorem \ref{yu-theorem-8-18-2}.
Section~\ref{yu-sec-5} applies Theorem \ref{yu-theorem-8-18-2} to derive the bang-bang property of time-optimal control problems.

 \section{Preliminaries}\label{sec-yu-2}

We collect here some notation that will be used repeatedly in the sequel. Let $\mathbb{R}^+:=(0,+\infty)$ and $\mathbb{N}^+:=\{1,2,\ldots\}$. For $d\in\mathbb{N}^+$, $r\in\mathbb{R}^+$, and $x_0\in\mathbb{R}^d$, let $B_r^d(x_0)$ denote the closed ball in $\mathbb{R}^d$ centered at $x_0$ with radius $r$. In particular, we write $B_r^d$ when $x_0=0$. For any Lebesgue measurable set $E\subset\mathbb{R}^d$, $|E|$ denotes its Lebesgue measure.
    Given a matrix $M\in\mathbb{R}^{d_1\times d_2}$ with $d_1,d_2\in\mathbb{N}^+$,
    we denote by $M^\top$ its transpose. For a Banach space $X$,
    $\|\cdot\|_X$ denotes its norm. If $X$ is a Hilbert space,  $\langle\cdot,\cdot\rangle_X$ denotes
    its inner product.
    For two Banach spaces $X_1$ and $X_2$, let $\mathcal{L}(X_1;X_2)$ denote the space of all bounded linear operators from $X_1$ to $X_2$, and write $\mathcal{L}(X_1):=\mathcal{L}(X_1;X_1)$.
        For $a_1, a_2\in\mathbb{R}$, we define $a_1\wedge a_2:=\min\{a_1, a_2\}$ and $[a_1]_{\mathbb{Z}}:=\max\{k\in\mathbb{Z}:k\leq a_1\}$.

Let $\{\lambda_i\}_{i\in\mathbb{N}^+}$, with $0<\lambda_1<\lambda_2\leq\cdots$ and $\lambda_k\to+\infty$ as $k\to+\infty$, be the eigenvalues of $-\Delta$ under the homogeneous
 Dirichlet boundary condition.  Let $\{e_i\}_{i\in\mathbb{N}^+}$ be the corresponding normalized eigenfunctions, which form an orthonormal basis of $L^2(\Omega;\mathbb{R})$. We further define
\begin{equation}\label{yu-8-19-00}
k_\lambda:=\max\{i\in\mathbb{N}^+:\lambda_i\leq \lambda\}\;\;\mbox{for any}\;\;
 \lambda > \lambda_1.
\end{equation}
    By the classical Weyl law  (see, e.g.,  \cite{YY}), there exists a constant $M=M(\Omega)>0$ such that
\begin{equation}\label{yu-nn-1-1}
    k_\lambda=M\lambda^{\frac{d}{2}}+o(\lambda^{\frac{d}{2}})\;\;\mbox{as}\;\;\lambda\to+\infty.
\end{equation}

We now recall the  $L^1$-type  Lebeau-Robbiano inequality   for measurable sets (see
   \cite[Theorems 3 and 5]{Luis-Wang-Zhang}).
\begin{lemma}\label{a1}
    Let $x_0\in \Omega$ and $R\in(0,1]$ be such that $B_{4R}^d(x_0)\subset \Omega$. For each measurable set $\omega\subset B_R^d(x_0)$ with $|\omega|>0$, there exists a constant $C(R,|\omega|/|B_R^d|)>0$ such that,
    for all $\lambda>\lambda_1$ and all $\{a_i\}_{i=1}^{k_\lambda}\subset\mathbb{R}$,
\begin{equation}\label{yu-8-19-20}
\sum_{i=1}^{k_\lambda}|a_i|^2 \leq C(R,|\omega|/|B_R^d|) e^{C(R,|\omega|/|B_R^d|) \sqrt{\lambda}} \Big(\Big\|\chi_\omega\sum_{i=1}^{k_\lambda} a_i e_i\Big\|_{L^1(\Omega;\mathbb{R})}\Big)^2.
\end{equation}
\end{lemma}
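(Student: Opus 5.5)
The plan is to lift the finite eigenfunction sum to a harmonic function in one extra variable, use real analyticity of this lift to pass from an open ball to the measurable set $\omega$, and use a quantitative unique continuation estimate to pass from the ball to all of $\Omega$. Each step should cost at most a factor $e^{C\sqrt{\lambda}}$.

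\textbf{Harmonic lift.} Set $u:=\sum_{i=1}^{k_\lambda}a_ie_i$ and, for $(x,s)\in\Omega\times(-1,1)$,
\[
F(x,s):=\sum_{i=1}^{k_\lambda}a_i\cosh\big(\sqrt{\lambda_i}\,s\big)e_i(x).
\]
Then $\Delta_xF+\partial_s^2F=0$, $F=0$ on $\partial\Omega\times(-1,1)$, and $F(\cdot,0)=u$. Since $\lambda_i\le\lambda$ and the $e_i$ are orthonormal,
\[
\|F\|_{L^2(\Omega\times(-1,1))}\le e^{\sqrt{\lambda}}\Big(\sum_{i=1}^{k_\lambda}|a_i|^2\Big)^{1/2}=e^{\sqrt\lambda}\|u\|_{L^2(\Omega)}.
\]

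\textbf{Step 1: observation from a ball.} I would show that
\begin{equation*}
\|u\|_{L^2(\Omega)}\le Ce^{C\sqrt\lambda}\|u\|_{L^2(B_R^d(x_0))}.
\end{equation*}
This is the classical Lebeau--Robbiano spectral inequality. It follows from interpolation (three-ball) inequalities for the harmonic function $F$, chained from $B_R^d(x_0)\times(-R,R)$ out to all of $\Omega\times(-1/2,1/2)$. Near $\partial\Omega$ one needs a boundary version of the three-ball inequality. Because $\Omega$ is only Lipschitz, odd reflection is not available, and I would instead use the local star-shapedness of $\Omega$ with a Rellich/frequency-function argument. The resulting interpolation exponent is then absorbed via the $e^{\sqrt\lambda}$ upper bound on $\|F\|$. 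Finally, a trace-type estimate, obtained by comparing the $L^2$ norm of $F$ near $\{s=0\}$ with that of $u$, brings the estimate back to $s=0$.

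\textbf{Step 2: analyticity bounds.} Since $B_{4R}^d(x_0)\subset\Omega$, interior Cauchy estimates for harmonic functions give, on $B_{2R}^d(x_0)$,
\[
|\partial_x^\alpha u(x)|\le \frac{M\,|\alpha|!}{(\rho R)^{|\alpha|}},\qquad M:=Ce^{C\sqrt\lambda}\|u\|_{L^2(\Omega)},
\]
for some universal $\rho>0$. Hence $u$ is real analytic on $B_{2R}^d(x_0)$, with a radius of analyticity comparable to $R$ and a bound $M$.

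\textbf{Step 3: from the ball to $\omega$.} I would apply the Nadirashvili--Vessella propagation of smallness for analytic functions from measurable sets. For $f$ satisfying the bounds of Step 2 and measurable $\omega\subset B_R^d(x_0)$, it gives
\[
\sup_{B_R^d(x_0)}|f|\le C\,M^{1-\theta}\Big(\frac{1}{|\omega|}\int_\omega|f|\,dx\Big)^{\theta},
\]
with $C$ and $\theta\in(0,1)$ depending only on $R$ and $|\omega|/|B_R^d|$. An $L^1$ version follows from the sup version by restricting to a subset of $\omega$ of half its measure on which $|f|$ is at most twice its average; this is the Remez-type step.

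\textbf{Combining.} Combining Steps 1–3,
\[
\|u\|_{L^2}\le Ce^{C\sqrt\lambda}\sup_{B_R}|u|\le Ce^{C\sqrt\lambda}\big(e^{C\sqrt\lambda}\|u\|_{L^2}\big)^{1-\theta}\|\chi_\omega u\|_{L^1}^{\theta}.
\]
Dividing by $\|u\|_{L^2}^{1-\theta}$ and raising to the power $1/\theta$ gives $\|u\|_{L^2}\le Ce^{C\sqrt\lambda}\|\chi_\omega u\|_{L^1}$. Squaring yields \eqref{yu-8-19-20}.

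\textbf{Main obstacle.} The hardest step is the boundary part of Step 1 for a merely Lipschitz, locally star-shaped domain. I would follow the approach of \cite{Luis-Wang-Zhang} there, since Step 3 is standard once the analyticity bounds are in place.
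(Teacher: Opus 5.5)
Your outline is essentially the argument of \cite{Luis-Wang-Zhang}: harmonic lift, boundary interpolation on locally star-shaped Lipschitz domains, interior analyticity bounds, and propagation of smallness from measurable sets for real-analytic functions. The paper does not reprove this lemma but imports it directly from \cite[Theorems 3 and 5]{Luis-Wang-Zhang}.

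One point to fix in Step 1 concerns the return from $\|F\|_{L^2(B_R^d(x_0)\times(-R,R))}$ to $u$ on $B_R^d(x_0)$. This is not a trace-type comparison: the bound $\|F\|_{L^2(\Omega\times(-1/2,1/2))}\geq\|u\|_{L^2(\Omega)}$ uses orthogonality of the $e_i$ over all of $\Omega$ and has no local analogue on $B_R^d(x_0)$. What is needed is a conditional-stability estimate for the Cauchy problem with data $F=u$, $\partial_sF=0$ on $B_{2R}^d(x_0)\times\{0\}$. Alternatively, as in Lebeau--Robbiano, you can use the $\sinh$-lift together with a Carleman estimate carrying boundary terms at $s=0$. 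Either way, the resulting H\"older exponent is absorbed into $e^{C\sqrt{\lambda}}$ like the others.
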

As a corollary of Lemma \ref{a1}, we obtain the following result, which will be used in the sequel.
\begin{corollary}\label{yu-corollary-8-29-3}
    Suppose that $B^d_{4R}(x_0)\subset \Omega$ for some $R\in\mathbb{R}^+$ and $x_0\in\Omega$. Let
    $D\in(0, |B^d_{R}|)$.
    Then there exists a constant $C(R,D)>0$ such that for every measurable set $\omega\subset B^d_R(x_0)$
    satisfying $|\omega|\geq D$,
       inequality \eqref{yu-8-19-20} holds
            with $C(R,|\omega|/|B_R^d|)$ replaced by $C(R,D)$.
\end{corollary}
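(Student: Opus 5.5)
The claim is that the constant in Lemma \ref{a1} can be made uniform over all measurable $\omega\subset B^d_R(x_0)$ with $|\omega|\ge D$. We only need to know that in Lemma \ref{a1} the constant depends on $\omega$ solely through $R$ and the density $|\omega|/|B^d_R|$, and can be taken nonincreasing in this density. Monotonicity is not automatic from the statement, so we will enforce it by a short reduction argument.

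We reduce to sets of fixed measure. Fix $R$ and $D$, and set $\theta:=D/|B^d_R|\in(0,1)$. Let $\omega\subset B^d_R(x_0)$ be measurable with $|\omega|\ge D$. The map $r\mapsto|\omega\cap B^d_r(x_0)|$ is continuous, vanishes at $r=0$, and is at least $D$ at $r=R$. By the intermediate value theorem we can choose $r\in(0,R]$ with $|\omega\cap B^d_r(x_0)|=D$, and we set $\omega':=\omega\cap B^d_r(x_0)$. Then $\omega'\subset\omega\subset B^d_R(x_0)$, $|\omega'|=D$, and $|\omega'|/|B^d_R|=\theta$.

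Now apply Lemma \ref{a1} to $\omega'$. Since $B^d_{4R}(x_0)\subset\Omega$, the hypothesis holds, at least when $R\le 1$. For $\lambda>\lambda_1$ and $\{a_i\}$ we get
\[
\sum_{i=1}^{k_\lambda}|a_i|^2\le C(R,\theta)e^{C(R,\theta)\sqrt\lambda}\Big(\Big\|\chi_{\omega'}\sum_{i=1}^{k_\lambda}a_ie_i\Big\|_{L^1(\Omega;\mathbb{R})}\Big)^2 .
\]
Because $\chi_{\omega'}\le\chi_\omega$, the $L^1$ norm on the right is bounded by the one with $\chi_\omega$. Hence \eqref{yu-8-19-20} holds for $\omega$ with the constant $C(R,D):=C(R,D/|B^d_R|)$, which does not depend on $\omega$.

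The one technical point is the restriction $R\in(0,1]$ in Lemma \ref{a1}, since the corollary allows any $R>0$. If $R>1$, we cover $B^d_R(x_0)$ by finitely many balls $B^d_{1}(x_j)$ with centers $x_j\in B^d_R(x_0)$. Each satisfies $B^d_4(x_j)\subset B^d_{4R}(x_0)$: indeed $|x_j-x_0|+4\le R+4\le 4R$ when $R\ge 4/3$. For smaller $R$ we use balls of radius $R/4$ instead. The number of balls $m$ depends only on $R$ and $d$. At least one ball satisfies $|\omega\cap B_{\rho}(x_j)|\ge D/m$. We apply the reduction above in that ball, with density $(D/m)/|B_\rho|$. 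Since there are only finitely many balls and each gives a uniform constant, taking the maximum over them yields $C(R,D)$.
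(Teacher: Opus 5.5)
Your reduction is exactly the paper's proof: you choose $r\in(0,R]$ with $|\omega\cap B^d_r(x_0)|=D$ by continuity, apply Lemma \ref{a1} to $\omega\cap B^d_r(x_0)$, whose density is exactly $D/|B^d_R|$, and conclude using $\chi_{\omega\cap B^d_r(x_0)}\le\chi_\omega$. Your covering argument for $R>1$ is also correct, and it fills a point the paper skips: the paper applies Lemma \ref{a1}, which requires $R\le 1$, without comment. The omission is harmless there, since in the paper's later use $R$ comes from Lemma \ref{yu-lemma-9-3-1} and can always be taken $\le 1$.
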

\begin{proof}
  Fix an arbitrary  subset $\omega\subset B^d_R(x_0)$ with $|\omega|\geq D$.
    Since $|\omega|\geq D$ and $D\in(0, |B^d_{R}|)$,
    the absolute continuity of the Lebesgue integral ensures the existence of a radius
             $r^*\in (0,R]$ such that
\begin{equation*}
    |B_{r^*}^d(x_0)\cap\omega|=\int_{B^d_{r^*}(x_0)}\chi_{\omega}(x)dx=D.
\end{equation*}
   Let  $\omega_{r^*}:=B_{r^*}^d(x_0)\cap\omega\subset B^d_{R}(x_0)$.
   From the equality above, it follows that $|\omega_{r^*}|/|B^d_R|=D/|B_R^d|$.
   By virtue of this fact, we  apply Lemma \ref{a1} with $\omega$ replaced by $\omega_{r^*}$
    to obtain  a  constant
    $C(R,|\omega_{r^*}|/|B_R^d|)=C(R,D/|B_R^d|)>0$ such that  for all $\lambda> \lambda_1$  and all $\{a_i\}_{i=1}^{k_\lambda}\subset\mathbb{R}$,
    \begin{equation*}
\sum_{i=1}^{k_\lambda}|a_i|^2 \leq C(R,D/|B_R^d|)e^{C(R,D/|B_R^d|)\sqrt{\lambda}} \Big(\Big\|\chi_{\omega_{r^*}}\sum_{i=1}^{k_\lambda} a_i e_i\Big\|_{L^1(\Omega;\mathbb{R})}\Big)^2.
\end{equation*}
   Since $\omega_{r^*}\subset \omega$, the above inequality yields   \eqref{yu-8-19-20}
            with $C(R,|\omega|/|B_R^d|)$ replaced by $C(R,D/|B_R^d|)$.

            Redefine $C(R,D):=C(R,D/|B_R^d|)$. This is well defined since,  for fixed dimension $d$,
            the quantity $D/|B_R^d|$ depends only on $R$ and $D$. Consequently, \eqref{yu-8-19-20}
            holds
            with $C(R,|\omega|/|B_R^d|)$ replaced by $C(R,D)$. This completes the proof.
\end{proof}
The following lemma deals with the partition of measurable sets and will be used repeatedly in the sequel.
\begin{lemma}\label{yu-lemma-9-3-1}
    Let $\mathcal{O}$ be an open set of  $\mathbb{R}^l$ with $l\in\mathbb{N}^+$, and let $Q\subset \mathcal{O}$ be a set of positive Lebesgue measure.
    Then there exist $q\in Q$ and $R>0$ such that, for every $r\in(0,R]$, $B_{4r}^{l}(q)\subset \mathcal{O}$ and $|B_r^l(q)\cap Q|\geq \frac{1}{2}|B_r^l|$.
\end{lemma}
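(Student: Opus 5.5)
The plan is to use the Lebesgue density theorem. Since $Q$ has positive measure, we may first replace $Q$ by a bounded measurable subset $Q'\subset Q$ of positive measure, for instance $Q\cap B^l_n(0)$ for $n$ large. By the Lebesgue differentiation theorem applied to $\chi_{Q'}\in L^1_{\mathrm{loc}}(\mathbb{R}^l)$, almost every point of $Q'$ is a point of density one:
\begin{equation*}
\lim_{r\to0^+}\frac{|B_r^l(q)\cap Q'|}{|B_r^l|}=1 .
\end{equation*}
Because $|Q'|>0$, the set of such density points is nonempty. We pick one and call it $q\in Q'\subset Q$.

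Next, we choose $R_1>0$ such that for all $r\in(0,R_1]$,
\begin{equation*}
\frac{|B_r^l(q)\cap Q|}{|B_r^l|}\ \geq\ \frac{|B_r^l(q)\cap Q'|}{|B_r^l|}\ \geq\ \frac12 .
\end{equation*}
This is possible directly from the definition of the limit. Since $q\in Q\subset\mathcal{O}$ and $\mathcal{O}$ is open, there is $\rho>0$ with $B^l_\rho(q)\subset\mathcal{O}$. Note that the balls here are closed, so we shrink $\rho$ slightly if needed so that the closed ball is contained in $\mathcal{O}$. We then set $R:=\min\{R_1,\rho/4\}$. For every $r\in(0,R]$ this gives $B^l_{4r}(q)\subset B^l_{\rho}(q)\subset\mathcal{O}$ together with the density bound, which is exactly the claim.

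There is no real obstacle here. The only points needing care are that the density theorem holds at almost every point, so a density point inside $Q$ itself exists, and the minor issue that the balls are closed, which is handled by shrinking $\rho$.
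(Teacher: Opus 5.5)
Your proposal is correct and follows essentially the same route as the paper: pick a Lebesgue density point $q\in Q$, get $R_1$ from the limit of the density ratio, get $R_2$ from the openness of $\mathcal{O}$, and take $R:=R_1\wedge R_2$. Your extra steps (passing to a bounded $Q'\subset Q$ and shrinking $\rho$ because the balls are closed) are harmless refinements of the same argument; the first is not needed, since $\chi_Q$ is already locally integrable.
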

\begin{proof}
    Let
     $q\in Q$ be a Lebesgue point of $\chi_{Q}$. Then $\lim_{r\to 0}\frac{|B_r^l(q)\cap Q|}{|B_r^l|}=1$,
     so
    there exists $R_1>0$ such that
\begin{equation}\label{yu-b-9-3-1}
    |B^l_{r}(q)\cap Q|\geq
    \frac{1}{2}|B^l_{r}|\;\;\mbox{for all}\;\;r\in(0,R_1].
\end{equation}
   Moreover, since $\mathcal{O}$ is open and $q\in Q$, there  exists $R_2>0$ such that
    $B_{4r}^l(q)\subset \mathcal{O}$ for each $r\in(0,R_2]$.
    Combining this with \eqref{yu-b-9-3-1} implies that for
     all $r\in(0,R_1\wedge R_2]$,
    $B_{4r}^{l}(q)\subset \mathcal{O}$ and $|B_r^l(q)\cap Q|\geq \frac{1}{2}|B_r^l|$. Setting $R:=R_1\wedge R_2$ completes the proof.
\end{proof}

 The following is a standard result. We include its proof for the sake of completeness.
\begin{lemma}\label{yu-lemma-9-4-3}
    Let $H$ be a Hilbert space and let $F_i(\cdot)\in C(H; [0, +\infty))$ for $i=1,2,3$. Assume that
     $F_1(h)\leq F_3(h)$ for all $h\in H$. Then for each $\theta\in(0,1)$, the following statements are equivalent:
     \begin{enumerate}
  \item [$(i)$] There exists $\Pi_1\geq 1$ such that $F_1(h)\leq \Pi_1(\varepsilon^{-\gamma}F_2(h)+\varepsilon F_3(h))$
   for all $h\in H$ and all $\varepsilon\in(0,1)$, where $\gamma:=\theta(1-\theta)^{-1}$.
  \item [$(ii)$] There exists $\Pi_2\geq 1$ such that $F_1(h)\leq \Pi_2 (F_2(h))^{1-\theta}(F_3(h))^{\theta}$ for all $h\in H$.
\end{enumerate}
  Moreover, if $(i)$ holds with constant $\Pi_1$, then $(ii)$ holds with $\Pi_2=2\Pi_1$.
\end{lemma}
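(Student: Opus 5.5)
The plan is to prove the two implications separately, fixing $h\in H$ throughout and using only the elementary properties of $F_1,F_2,F_3$ together with $F_1\le F_3$. In both directions the relation $\gamma=\theta(1-\theta)^{-1}$ will produce the exponents $1-\theta$ and $\theta$, because $\varepsilon^{-\gamma}=\varepsilon^{-\theta/(1-\theta)}$.

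\emph{From $(ii)$ to $(i)$.} We apply Young's inequality with conjugate exponents $p=1/(1-\theta)$ and $q=1/\theta$ to the product $(\varepsilon^{-\gamma}F_2)^{1-\theta}(\varepsilon F_3)^{\theta}$. The powers of $\varepsilon$ cancel, since $-\gamma(1-\theta)+\theta=0$. This gives
\begin{equation*}
F_2^{1-\theta}F_3^{\theta}\le (1-\theta)\varepsilon^{-\gamma}F_2+\theta\varepsilon F_3\le \varepsilon^{-\gamma}F_2+\varepsilon F_3,
\end{equation*}
and hence $(i)$ holds with $\Pi_1=\Pi_2$.

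\emph{From $(i)$ to $(ii)$, with $\Pi_2=2\Pi_1$.} We choose $\varepsilon$ to balance the two terms, namely $\varepsilon^{-\gamma}F_2=\varepsilon F_3$. This means $\varepsilon^{1+\gamma}=F_2/F_3$, i.e. $\varepsilon_*=(F_2/F_3)^{1-\theta}$, because $1+\gamma=(1-\theta)^{-1}$. At this choice $\varepsilon_* F_3=F_2^{1-\theta}F_3^{\theta}$, so $(i)$ yields $F_1\le 2\Pi_1F_2^{1-\theta}F_3^{\theta}$. This is legitimate only when $\varepsilon_*\in(0,1)$, that is, when $0<F_2<F_3$. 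The remaining cases are handled as follows.
\begin{itemize}
\item If $F_3(h)=0$, then $F_1(h)\le F_3(h)=0$ and $(ii)$ is trivial.
\item If $F_2(h)\ge F_3(h)>0$, then $F_2^{1-\theta}F_3^{\theta}\ge F_3\ge F_1$. The bound holds with constant $1\le 2\Pi_1$, and this is exactly where the hypothesis $F_1\le F_3$ is used.
\item If $F_2(h)=0<F_3(h)$, then $(i)$ gives $F_1\le \Pi_1\varepsilon F_3$ for every $\varepsilon\in(0,1)$. Letting $\varepsilon\to0$ gives $F_1=0$, so $(ii)$ holds.
\end{itemize}

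The main point to check is the balancing choice of $\varepsilon$ together with this case split. The continuity assumptions are not needed. The only delicate step is making sure that $\varepsilon_*$ lies in the admissible range $(0,1)$, and the case split above covers exactly the situations where it does not.
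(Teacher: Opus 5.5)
Your proposal is correct and follows essentially the same route as the paper: Young's inequality with exponents $1/(1-\theta)$ and $1/\theta$ for $(ii)\Rightarrow(i)$, and for $(i)\Rightarrow(ii)$ the balancing choice $\varepsilon_*=(F_2/F_3)^{1/(1+\gamma)}$ when $0<F_2<F_3$, the hypothesis $F_1\le F_3$ when $F_3\le F_2$, and the limit $\varepsilon\to0^+$ when $F_2=0<F_3$. The only cosmetic difference is that the paper absorbs your case $F_3(h)=0$ into its case $F_3(h)\le F_2(h)$.
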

\begin{proof}
    We arbitrarily fix $\theta\in (0,1)$.
    The implication $(ii) \Rightarrow (i)$  follows immediately from   Young's inequality.

    We now prove  $(i)\Rightarrow(ii)$. Assume that $(i)$ holds with some constant $\Pi_1\ge 1$.
It suffices to prove that
\begin{equation}\label{yu-9-11-30}
    F_1(h)\leq 2\Pi_1 (F_2(h))^{1-\theta}(F_3(h))^{\theta}\;\;\mbox{for all}\;\;h\in H.
\end{equation}
    To this end, we  fix an arbitrary $h\in H$.
    In the case $F_3(h)\leq F_2(h)$,
    inequality \eqref{yu-9-11-30}
         follows directly from the assumption $F_1(h)\leq F_3(h)$. We now consider the case $F_3(h)>F_2(h)$. If $F_2(h)=0$, then from $(i)$, $F_1(h)\le \varepsilon\Pi_1 F_3(h)$ for all $\varepsilon\in(0,1)$. Letting $\varepsilon\to 0^+$, we obtain $F_1(h)=0$, and hence (\ref{yu-9-11-30}) follows. If $F_2(h)>0$, define  $\varepsilon_*:=\big(F_2(h)/F_3(h)\big)^{\frac{1}{\gamma+1}}$, where  $\gamma:=\theta(1-\theta)^{-1}$.
   Then $\varepsilon_*\in(0,1)$ and $(\varepsilon_*)^{-\gamma}F_2(h)=\varepsilon_* F_3(h)$.
   Combining these with  $(i)$ yields \eqref{yu-9-11-30}. Moreover, from \eqref{yu-9-11-30}, one can see that if $(i)$ is true  with the constant $\Pi_1$, then the constant $\Pi_2$ in $(ii)$ can be chosen to be $2\Pi_1$. This completes the proof.
\end{proof}

The following lemma  will be used in the proof of
Theorem~\ref{yu-theorem-8-18-2}.

\begin{lemma}\label{yu-lemma-8-19-1}
    Suppose that $T>0$ and $B^d_R(x_0)\subset\Omega$ for some $R>0$ and $x_0\in\Omega$. Let $\mathcal{D}\subset B^d_R(x_0)\times(0,T)$ be a measurable set with $|\mathcal{D}|>0$. Set
\begin{equation}\label{yu-8-19-23}
    \mathcal{D}_t:=\{x\in\Omega:(x,t)\in \mathcal{D}\}\;\;\mbox{for}\;\;t\in(0,T)\;\;\mbox{and}\;\;
    E:=\Big\{t\in(0,T):|\mathcal{D}_t|\geq \frac{|\mathcal{D}|}{2T}\Big\}.
\end{equation}
    Then, the following  statements hold:
\begin{enumerate}
  \item [$(i)$] $\mathcal{D}_t$ is a measurable subset of $\Omega$ for a.e. $t\in(0,T)$;
  \item[$(ii)$]  for any $f\in L^1(\Omega\times(0,T))$,
      the function $t\mapsto \int_{\Omega}\chi_{\mathcal{D}_t}(x)|f(x,t)|dx$ is integrable over $(0,T)$;
  \item [$(iii)$] $E$ is a measurable subset of $(0,T)$  and $|E|\geq \frac{|\mathcal{D}|}{2|B^d_R|}$;
 \item [$(iv)$] $\chi_{E}\chi_{\mathcal{D}_t}\leq \chi_{\mathcal{D}}$  in $\Omega\times (0,T)$ in the sense
 that $\int_0^T\int_\Omega \chi_{E}\chi_{\mathcal{D}_t}|f|dxdt\leq \int_0^T\int_\Omega \chi_{\mathcal{D}}|f|dxdt$
  for all $f\in L^1(\Omega\times(0,T))$ (here, we note that, by $(i)$ and $(ii)$, the integral on the left-hand side of this inequality is well defined).
\end{enumerate}
\end{lemma}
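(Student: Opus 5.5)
The plan is to derive all four statements from Fubini--Tonelli applied to the indicator $\chi_{\mathcal{D}}$ on the product space $\Omega\times(0,T)$. Since $\mathcal{D}$ is Lebesgue measurable in $\mathbb{R}^{d+1}$, Tonelli's theorem for the completed product measure gives two facts. For a.e.\ $t\in(0,T)$, the slice $x\mapsto\chi_{\mathcal{D}}(x,t)=\chi_{\mathcal{D}_t}(x)$ is Lebesgue measurable. Moreover, $t\mapsto|\mathcal{D}_t|$ is measurable with $\int_0^T|\mathcal{D}_t|\,dt=|\mathcal{D}|$. The first fact is exactly $(i)$. For $(ii)$, I will apply Fubini to the integrable function $\chi_{\mathcal{D}}|f|\in L^1(\Omega\times(0,T))$. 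This shows that $t\mapsto\int_\Omega\chi_{\mathcal{D}}(x,t)|f(x,t)|\,dx=\int_\Omega\chi_{\mathcal{D}_t}(x)|f(x,t)|\,dx$ is defined a.e., is measurable, and has integral $\int_{\Omega\times(0,T)}\chi_{\mathcal{D}}|f|\,dxdt<\infty$.

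For $(iii)$, measurability of $E$ follows because $E$ is a superlevel set of the measurable function $t\mapsto|\mathcal{D}_t|$. The key observation is that $\mathcal{D}_t\subset B^d_R(x_0)$, so $|\mathcal{D}_t|\le|B^d_R|$ for every $t$. I then split the time integral:
\begin{equation*}
|\mathcal{D}|=\int_0^T|\mathcal{D}_t|\,dt=\int_E|\mathcal{D}_t|\,dt+\int_{(0,T)\setminus E}|\mathcal{D}_t|\,dt\le |E|\,|B^d_R|+T\cdot\frac{|\mathcal{D}|}{2T}.
\end{equation*}
Rearranging gives $|E|\ge|\mathcal{D}|/(2|B^d_R|)$.

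For $(iv)$, I use $\chi_E(t)\chi_{\mathcal{D}_t}(x)\le\chi_{\mathcal{D}_t}(x)=\chi_{\mathcal{D}}(x,t)$ pointwise, valid for a.e.\ $t$ and all $x$. Integrating first in $x$, then in $t$, and using the Fubini identity from $(ii)$ yields the stated inequality.

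None of these steps is a genuine obstacle. The only point requiring care is the measurability of the slices: $\mathcal{D}$ is merely Lebesgue (not Borel) measurable, so the slices are measurable only for a.e.\ $t$. This is handled by invoking the complete-measure version of Fubini--Tonelli, or equivalently by replacing $\mathcal{D}$ with a Borel set differing from it by a null set.
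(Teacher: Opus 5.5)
Your proposal is correct and follows the same route as the paper. The paper proves $(ii)$ exactly as you do, by Fubini applied to $\chi_{\mathcal{D}}|f|$ with $\chi_{\mathcal{D}_t}(x)=\chi_{\mathcal{D}}(x,t)$, and defers $(i)$, $(iii)$, $(iv)$ to \cite[Lemma 1]{Luis-Wang-Zhang}, whose argument is the same slicing-plus-averaging estimate $|\mathcal{D}|=\int_0^T|\mathcal{D}_t|\,dt\le |E|\,|B^d_R|+\tfrac12|\mathcal{D}|$ that you write out. Your version simply makes the lemma self-contained, and handling the only technical point through the complete-measure form of Fubini--Tonelli is the right call: slices are measurable only for a.e.\ $t$, and $E$ is then measurable by completeness of Lebesgue measure on $(0,T)$.
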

\begin{proof}
    Statements $(i)$, $(iii)$, and $(iv)$ are proved in \cite[Lemma 1]{Luis-Wang-Zhang}, so it remains to verify $(ii)$, which follows from Fubini’s theorem since $\mathcal{D}_t$ is a slice of $\mathcal{D}$ for each $t \in (0,T)$ and hence $\chi_{\mathcal{D}_t}(x) = \chi_{\mathcal{D}}(x,t)$ for all $(x,t) \in \Omega \times (0,T)$.
\end{proof}
  The following result is a  consequence of  Corollary~\ref{yu-corollary-8-29-3}, which will be used to prove Theorem~\ref{yu-theorem-8-18-2}.

\begin{corollary}\label{yu-corollary-8-19-1}
     Suppose that  $T>0$ and $B^d_{4R}(x_0)\subset\Omega$ for some $R>0$ and $x_0\in\Omega$. Let $\mathcal{D}\subset B^d_R(x_0)\times(0,T)$ be a set of positive measure. With the notation of Lemma \ref{yu-lemma-8-19-1},  there exists a constant $C:=C(R,|\mathcal{D}|/T)>0$ such that for
      all $t\in E$,
\begin{equation}\label{yu-8-19-25}
\sum_{i=1}^{k_\lambda}|a_i|^2 \leq C e^{C\sqrt{\lambda}} \Big(\Big\|\chi_{\mathcal{D}_t}\sum_{i=1}^{k_\lambda} a_i e_i\Big\|_{L^1(\Omega;\mathbb{R})}\Big)^2\;\;\mbox{for all}\;\; \lambda> \lambda_1\;\;\mbox{and all}\;\;
\{a_i\}_{i=1}^{k_\lambda}\subset\mathbb{R},
\end{equation}
    where $\{\mathcal{D}_t\}_{t\in[0,T]}$ and $E$ are given by \eqref{yu-8-19-23}.
\end{corollary}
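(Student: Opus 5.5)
The plan is to apply Corollary \ref{yu-corollary-8-29-3} slice by slice, with the lower bound $D$ on $|\mathcal{D}_t|$ chosen uniformly over $t\in E$. By Lemma \ref{yu-lemma-8-19-1}, for $t\in E$ the set $\mathcal{D}_t$ is measurable and $|\mathcal{D}_t|\geq |\mathcal{D}|/(2T)$. Since $\mathcal{D}\subset B^d_R(x_0)\times(0,T)$, we also have $\mathcal{D}_t\subset B^d_R(x_0)$ for every $t$. The task is therefore to find a single $D\in(0,|B^d_R|)$, depending only on $R$ and $|\mathcal{D}|/T$, with $|\mathcal{D}_t|\geq D$ for all $t\in E$.

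I will set $D:=\min\{|\mathcal{D}|/(2T),\,|B^d_R|/2\}$. The hypothesis of Corollary \ref{yu-corollary-8-29-3} requires $D<|B^d_R|$ strictly. This is why I do not simply take $D=|\mathcal{D}|/(2T)$: that choice could fail when $|\mathcal{D}_t|$ is close to the full ball. With the minimum, $D$ is positive and $D\le |B^d_R|/2<|B^d_R|$. In fact $|\mathcal{D}|\le T|B^d_R|$, so $|\mathcal{D}|/(2T)\le |B^d_R|/2$ and the minimum equals $|\mathcal{D}|/(2T)$ anyway. Writing it as a minimum just makes the strict inequality evident. Either way, $D$ depends only on $R$ and $|\mathcal{D}|/T$.

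Now fix $t\in E$. Then $\omega:=\mathcal{D}_t$ is a measurable subset of $B^d_R(x_0)$ with $|\omega|\geq D$, and $B^d_{4R}(x_0)\subset\Omega$ by hypothesis. Corollary \ref{yu-corollary-8-29-3} gives a constant $C(R,D)$, independent of $\omega$ and hence of $t$, such that \eqref{yu-8-19-20} holds with $\omega=\mathcal{D}_t$ for all $\lambda>\lambda_1$ and all coefficients $\{a_i\}_{i=1}^{k_\lambda}\subset\mathbb{R}$. Setting $C:=C(R,D)$ and recalling that $D$ is a function of $(R,|\mathcal{D}|/T)$ yields \eqref{yu-8-19-25}.

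One technical point needs care. Lemma \ref{yu-lemma-8-19-1}$(i)$ gives measurability of $\mathcal{D}_t$ only for a.e. $t$, and $E$ is defined through $|\mathcal{D}_t|$. Since $\mathcal{D}_t$ is a section of a measurable set, I will handle this either by noting that $E$ implicitly contains only those $t$ for which $\mathcal{D}_t$ is measurable, or by discarding a null set from $E$, which is harmless for the later use. Beyond this, the proof is a direct bookkeeping argument, and I do not expect a genuine obstacle.
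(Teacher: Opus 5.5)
Your proposal is correct and is the paper's argument: fix $t\in E$ and apply Corollary~\ref{yu-corollary-8-29-3} with $\omega=\mathcal{D}_t\subset B^d_R(x_0)$ and $D=|\mathcal{D}|/(2T)$. Two details you add are left implicit in the paper. First, you check that $|\mathcal{D}|/(2T)\le |B^d_R|/2<|B^d_R|$, which follows from $\mathcal{D}\subset B^d_R(x_0)\times(0,T)$; this makes the $\min$ in your choice of $D$ harmless but unnecessary. Second, you address measurability of the slices, where the paper simply writes ``for a.e.\ $t\in E$''.
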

\begin{proof}
    It follows from \eqref{yu-8-19-23} that
    $\mathcal{D}_t\subset B_R^d(x_0)$ for each $t\in(0,T)$, and
    $|\mathcal{D}_t|\geq \frac{|\mathcal{D}|}{2T}$ for a.e. $t\in E$.
    Now we fix an arbitrary  $t\in E$. By   Corollary \ref{yu-corollary-8-29-3} with $\omega=\mathcal{D}_t$
     and $D=\frac{|\mathcal{D}|}{2T}$,
    we obtain \eqref{yu-8-19-25}. This completes  the proof.
\end{proof}

The following result is taken from \cite[Proposition 2.1]{PW} and will be used in the proof of
Theorem~\ref{yu-theorem-8-18-2}.

\begin{lemma}\label{a2}    Let $E\subset(0,T)$ be a measurable set of positive measure. Let $\ell$ be a density point of $E$. Then for each $\mu>1$, there exists an $\ell_1\in(\ell,T)$ such that the sequence $\{\ell_m\}_{m\geq1}$, given by
\begin{equation}\label{b1}
\ell_{m+1}=\ell+\frac{1}{\mu^m}(\ell_1-\ell),
\end{equation}
satisfies
$\ell_m-\ell_{m+1}\leq3|E\cap(\ell_{m+1},\ell_m)|$ for each $m\in\mathbb{N}^+$.
\end{lemma}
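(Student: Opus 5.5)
This is Lemma \ref{a2}, quoted from \cite[Proposition 2.1]{PW}. The plan is to use the density-point property directly. Since $\ell$ is a density point of $E$, we have
\[
\frac{|E\cap(\ell,\ell+h)|}{h}\to 1 \quad \text{as } h\to 0^+ .
\]
Fix $\mu>1$. For any $\ell_1\in(\ell,T)$, the interval $(\ell_{m+1},\ell_m)$ equals $(\ell+\mu^{-m}(\ell_1-\ell),\,\ell+\mu^{-m+1}(\ell_1-\ell))$. Write $h_m:=\mu^{-m+1}(\ell_1-\ell)$, so that
\[
\ell_m=\ell+h_m,\qquad \ell_{m+1}=\ell+h_m/\mu,\qquad \ell_m-\ell_{m+1}=h_m(1-\mu^{-1}).
\]
The goal is therefore to choose $\ell_1$ close enough to $\ell$ that, for every $h\in(0,\ell_1-\ell]$,
\[
|E\cap(\ell+h/\mu,\ell+h)|\geq \tfrac13\,(1-\mu^{-1})\,h .
\]

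To get this, estimate the complement. We have
\[
|(\ell+h/\mu,\ell+h)\setminus E|\leq |(\ell,\ell+h)\setminus E|\leq \delta h
\]
as soon as $h\le h_0(\delta)$, by the density property. It follows that
\[
|E\cap(\ell+h/\mu,\ell+h)|\geq (1-\mu^{-1})h-\delta h .
\]
Choose $\delta:=\tfrac23(1-\mu^{-1})>0$, which depends only on $\mu$. Then the right-hand side equals $\tfrac13(1-\mu^{-1})h=\tfrac13(\ell_m-\ell_{m+1})$.

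Now take $h_0$ with $|(\ell,\ell+h)\setminus E|\le\delta h$ for all $h\in(0,h_0]$, and set $\ell_1:=\ell+\min\{h_0,(T-\ell)/2\}$. Then $\ell_1\in(\ell,T)$ and $h_m\le \ell_1-\ell\le h_0$ for every $m\ge1$. This yields $\ell_m-\ell_{m+1}\le 3|E\cap(\ell_{m+1},\ell_m)|$ for all $m\in\mathbb{N}^+$.

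The only delicate point is that we need a one-sided (right) density at $\ell$, whereas a density point is usually defined with symmetric intervals. The fix is the bound $|(\ell,\ell+h)\setminus E|\le |(\ell-h,\ell+h)\setminus E|=o(2h)$, so after halving $\delta$ the argument goes through unchanged. We also need $\ell<T$, which holds because $\ell\in(0,T)$ up to a null set of density points.
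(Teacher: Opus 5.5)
Your argument is correct and is essentially the proof of \cite[Proposition 2.1]{PW}, which the paper cites rather than reproves. There, the density of $E$ at $\ell$ gives $|E\cap(\ell,\ell+h)|\ge\frac{\mu+2}{3\mu}h$ for $h\in(0,\ell_1-\ell]$, which is exactly your choice $\delta=\frac23(1-\mu^{-1})$, and subtracting $\ell_{m+1}-\ell$ yields the factor $3$. One small point to tidy: your justification of $\ell<T$ needs no ``null set'' caveat, because any point $\ell\ge T$ has density at most $\frac12$ (and right density $0$) with respect to $E\subset(0,T)$, so every density point of $E$ satisfies $\ell<T$.
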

 We conclude this section with the following Remez-type inequality, which plays an important role in the
 proof of the integral-type interpolation observability inequality in Theorem~\ref{yu-theorem-8-18-1}--one of the key steps
  toward establishing
  Theorem~\ref{yu-theorem-8-18-2}.

\begin{lemma}\label{a4}
  For  $n\in\mathbb{N}^+$,  let $\mathcal{T}_n$ denote the space of all trigonometric polynomials of degree at most $n$ with real coefficients, i.e.,
\begin{equation*}
\mathcal{T}_n:=\Big\{f\in C([-\pi,\pi];\mathbb{R}):  f(\theta)=\sum_{k=0}^n\Big(a_k \sin (k \theta)+b_k \cos (k \theta)\Big), \theta \in(-\pi, \pi), \{(a_k,b_k)^\top\}_{k=0}^n \subset \mathbb{R}^2\Big\}.
\end{equation*}
Let $f\in\mathcal{T}_n$ and let $E\subset(-\pi, \pi)$ be a set of positive measure.  Then for every $p\in[1, \infty)$,
\begin{equation}\label{yu-8-22-7}
\|f\|_{L^p(-\pi, \pi;\mathbb{R})} \leq\Big(\frac{64}{\sin \frac{|E|}{4}}\Big)^{2(n+\frac{1}{p})}\|\chi_Ef\|_{L^p(-\pi, \pi;\mathbb{R})}.
\end{equation}
\end{lemma}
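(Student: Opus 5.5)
The plan is to reduce Lemma~\ref{a4} to the classical Remez inequality for algebraic polynomials via the substitution $z=e^{\mathrm{i}\theta}$, and then to handle general $p$ through a distribution-function argument.

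\textbf{Step 1 (the case $p=\infty$).} Write $f(\theta)=\textsf{Re}\,\big(e^{-\mathrm{i}n\theta}P(e^{\mathrm{i}\theta})\big)$, or more conveniently $|f(\theta)|=|Q(e^{\mathrm{i}\theta})|$ for the polynomial $Q(z)=z^nf$, which has degree $\le 2n$. Since $f$ is real, $Q$ is a genuine algebraic polynomial in $z$ of degree $2n$. We then apply the Remez inequality for polynomials on the unit circle (the Erd\'elyi form):
\begin{equation*}
\max_{\mathbb{T}}|Q|\le \Big(\frac{c}{\sin(|E|/4)}\Big)^{2n}\sup_{E}|Q|.
\end{equation*}
Alternatively, one can obtain it directly. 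Cartan's lemma shows that $|Q(z)|\ge (\delta/e)^{2n}\|Q\|_\infty$ outside a union of discs whose radii sum to at most $2\delta$. Choosing $\delta\sim\sin(|E|/4)$ so that these discs cannot cover $E$, we get $\|f\|_\infty\le (c_0/\sin\frac{|E|}{4})^{2n}\sup_E|f|$ with some absolute $c_0\le 64$.

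\textbf{Step 2 (upgrading to $L^p$).} This is the main obstacle, since the exponent $2(n+1/p)$ must appear. Let $\|f\|_\infty=|f(\theta_0)|$. The pointwise Remez bound applies on every measurable set, so it applies in particular to the set $E_1=E\setminus\{|f|<\varepsilon\|f\|_\infty\}$. The first task is to show that the sublevel set $\{|f|<\varepsilon\|f\|_\infty\}$ has measure at most $4\arcsin\big((c_0^{-1}\varepsilon)^{1/(2n)}\ldots\big)$, roughly comparable to $\varepsilon^{1/(2n)}$. This follows by applying Step 1 to that sublevel set itself.

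We then choose $\varepsilon$ so that this sublevel set has measure $\le |E|/2$. On $E\setminus\{|f|<\varepsilon\|f\|_\infty\}$, a set of measure $\ge|E|/2$, we have $|f|\ge\varepsilon\|f\|_\infty$. Hence
\begin{equation*}
\|\chi_Ef\|_p\ge \varepsilon\|f\|_\infty(|E|/2)^{1/p}.
\end{equation*}
On the other side, $\|f\|_p\le(2\pi)^{1/p}\|f\|_\infty$.

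Combining the two bounds gives $\|f\|_p\le \varepsilon^{-1}(4\pi/|E|)^{1/p}\|\chi_Ef\|_p$. With $\varepsilon\approx(\sin(|E|/8)/c_0)^{2n}$, we then absorb $4\pi/|E|\le 64^2/\sin^2(|E|/4)$ and the half-angle losses into the constant $64$.

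\textbf{Step 3 (constants).} Finally, we check the elementary bounds $\sin(x/2)\ge \tfrac12\sin x$ for $x\in(0,\pi/2]$ and $\pi/|E|\le 1/\sin(|E|/4)\cdot(\text{const})$. These ensure that all constants fit into $(64/\sin\frac{|E|}{4})^{2(n+1/p)}$. If the constant bookkeeping proves delicate, the result can also be cited directly as the Nazarov/Erd\'elyi $L^p$ Remez inequality for trigonometric polynomials, which is the form stated.
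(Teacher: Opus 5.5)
Your proposal takes essentially the same route as the paper: a pointwise Remez inequality (the paper cites Ganzburg's bound $(2/\sin\frac{|\hat E|}{4})^{2n}$, i.e.\ your Erd\'elyi form with $c=2$) is applied to the sublevel set of $|f|$ to bound its measure, that set is removed from $E$, and $\|\chi_E f\|_p$ is compared with $(2\pi)^{1/p}\|f\|_\infty$; the paper removes a set of measure at most $\frac14\sin\frac{|E|}{4}$ rather than $|E|/2$, which is immaterial. Your bookkeeping reaches the constant $64$ only if the pointwise constant is $c_0\le 32$ (halving $|E|$ doubles the base), so rely on the cited $c_0=2$ rather than the Cartan sketch with merely $c_0\le 64$; also, the sublevel-set bound should read $\sin(\mu/4)\le c_0\,\varepsilon^{1/(2n)}$, not $(c_0^{-1}\varepsilon)^{1/(2n)}$.
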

\begin{proof}
Fix $f\in\mathcal{T}_n$ arbitrarily. It follows from \cite[Theorem 1]{Ganzburg} that
for any set $\hat E \subset(-\pi, \pi)$ of positive measure,
\begin{equation}\label{yu-8-19-16}
    \|f\|_{C([-\pi,\pi];\mathbb{R})}\leq \Big(\frac{2}{\sin \frac{|\hat E|}{4}}\Big)^{2n}
    \sup_{t\in \hat E}|f(t)|.
\end{equation}
Let  $\varepsilon>0$. Define
\begin{equation}\label{yu-8-22-5-bbb}
    E_{f,\varepsilon}:=\Big\{\theta\in(-\pi,\pi):|f(\theta)|\leq \Big(\frac{1}{2}\sin\frac{\varepsilon}{4}\Big)^{2n}\|f\|_{C([-\pi,\pi];\mathbb{R})}\Big\}.
\end{equation}
   We claim  that
\begin{equation}\label{yu-4-2-1}
    |E_{f,\varepsilon}|\leq \varepsilon.
\end{equation}
  We prove it by contradiction.  Assume that \eqref{yu-4-2-1} does not hold, i.e.,  $|E_{f,\varepsilon}|>\varepsilon$. Then by \eqref{yu-8-19-16}, with $\hat E=E_{f,\varepsilon}$ (defined in  \eqref{yu-8-22-5-bbb}),  we have
\begin{eqnarray*}
    \|f\|_{C([-\pi,\pi];\mathbb{R})}&\leq& \Big(\frac{1}{2}\sin\frac{|E_{f,\varepsilon}|}{4}\Big)^{-2n}\sup_{\theta\in E_{f,\varepsilon}}|f(\theta)|\nonumber\\
    &\leq&\Big(\frac{1}{2}\sin\frac{|E_{f,\varepsilon}|}{4}\Big)^{-2n}
    \Big(\frac{1}{2}\sin\frac{\varepsilon}{4}\Big)^{2n}
    \|f\|_{C([-\pi,\pi];\mathbb{R})}
    <\|f\|_{C([-\pi,\pi];\mathbb{R})},
\end{eqnarray*}
which yields  a contradiction.

Now, we set  $\varepsilon:=\frac{1}{4}\sin\frac{|E|}{4}$. By \eqref{yu-4-2-1}, we have that $\Big|E_{f,\frac{1}{4}\sin\frac{|E|}{4}}\Big|\leq \frac{1}{4}\sin\frac{|E|}{4}$, which implies
$$
    \Big|E\cap \Big(E_{f,\frac{1}{4}\sin\frac{|E|}{4}}\Big)^c\Big|\geq |E|-\Big|E_{f,\frac{1}{4}\sin\frac{|E|}{4}}\Big|\geq |E|-\frac{1}{4}\sin\frac{|E|}{4}
    \geq \frac{1}{4}\sin\frac{|E|}{4}.
$$
  Since $\sin x-\frac{1}{2}x\geq 0$ for all $x\in[0,\frac{1}{16}]$, the above yields
\begin{eqnarray*}
    &\;&\|\chi_Ef\|_{L^p(-\pi,\pi;\mathbb{R})}^p\nonumber\\
    &\geq&\int_{-\pi}^\pi\chi_E(\theta)\chi_{\Big(E_{f,\frac{1}{4}\sin\frac{|E|}{4}}\Big)^c}(\theta)|f(\theta)|^pd\theta
    \geq\Big|E\cap \Big(E_{f,\frac{1}{4}\sin\frac{|E|}{4}}\Big)^c\Big|\Big(\frac{1}{2}
    \sin\Big(\frac{1}{16}\sin\frac{|E|}{4}\Big)\Big)^{2np}
    \|f\|^p_{C([-\pi,\pi];\mathbb{R})}\nonumber\\
    &\geq&\frac{1}{4}\sin\frac{|E|}{4}\Big(\frac{1}{64}\sin\frac{|E|}{4}\Big)^{2np}
    \|f\|^p_{C([-\pi,\pi];
    \mathbb{R})}\geq\Big(\frac{1}{64}\sin\frac{|E|}{4}\Big)^{2np+2}\int_{-\pi}^{\pi}|f(\theta)|^pd\theta.
\end{eqnarray*}
  Thus, inequality  \eqref{yu-8-22-7} holds. This completes the proof.
\end{proof} \color{black}

 \section{Integral-type interpolation observability inequality}\label{yu-sec-3}
    This section presents  a new integral-type interpolation observability  inequality, which plays an important  role in the proof of Theorem \ref{yu-theorem-8-18-2}.
    Let
\begin{equation}\label{yu-9-2-10}
    A:=\left(
        \begin{array}{cc}
          a & -b \\
          b & a \\
        \end{array}
      \right),\;\;\mathcal{A}:=\left(
                                 \begin{array}{cc}
                                   a\Delta & -b\Delta \\
                                   b\Delta & a\Delta \\
                                 \end{array}
                               \right)
\end{equation}
and
\begin{equation}\label{yu-9-2-11}
    B=(1,0).
\end{equation}
    It is clear that $\mathcal{A}$, with domain $H_0^1(\Omega;\mathbb{R}^2)\cap H^2(\Omega;\mathbb{R}^2)$, generates an
    analytic semigroup $\{e^{\mathcal{A}t}\}_{t\geq 0}$ on $L^2(\Omega;\mathbb{R}^2)$.
    For each $z\in L^2(\Omega;\mathbb{R}^2)$, let $\{v_j\}_{j\in\mathbb{N}^+}:=\{(v_{1,j},v_{2,j})^\top\}_{j\in\mathbb{N}^+}\subset\mathbb{R}^2$ denote the Fourier coefficients of $z$ with respect to the orthonormal basis $\{e_j\}_{j\in\mathbb{N}^+}$. Then
\begin{eqnarray}\label{yu-9-2-12}
    e^{\mathcal{A}t}z=\sum_{j=1}^{+\infty}e^{-A\lambda_j t} v_je_j
    =\sum_{j=1}^{+\infty}e^{-a\lambda_jt}\left(\begin{array}{rr}
\cos(\lambda_jbt) & \sin(\lambda_jbt) \\
-\sin(\lambda_jbt) & \cos(\lambda_jbt)
\end{array}\right)v_je_j.
\end{eqnarray}
\par
     The main result of this section is the following integral-type interpolation observability inequality.
\begin{theorem}\label{yu-theorem-8-18-1}
Let $E\subset (0,T)$ be a set of positive measure, and let
$\{\mathcal{D}_t\}_{t\in(0,T)}$ be a family of subsets of $\Omega$. Suppose that
\begin{enumerate}
\item[$(a)$] for a.e. $t\in (0,T)$, $\mathcal{D}_t$ is measurable;
\item[$(b)$] for any $f\in L^1(\Omega\times(0,T))$,
      the function $t\mapsto \int_{\Omega}\chi_{\mathcal{D}_t}(x)|f(x,t)|dx$ is integrable over $(0,T)$;
\item[$(c)$] there exists a constant $D>0$, independent of $E$, such that for a.e. $t\in E$,
\begin{equation*}
\sum_{i=1}^{k_\lambda}|a_i|^2
\leq D e^{D\sqrt{\lambda}} \Big(\|\chi_{\mathcal{D}_t}\textstyle\sum_{i=1}^{k_\lambda} a_i e_i\|_{L^1(\Omega;\mathbb{R})}\Big)^2
\;\;
\mbox{for all}\;\;\lambda>\lambda_1\;\;\mbox{and all}\;\;
\{a_i\}_{i=1}^{k_\lambda}\subset\mathbb{R}.
\end{equation*}
\end{enumerate}
Then there exists a constant $M=M(a,b,D)>0$ such that for all $\theta\in(0,1)$, all $0<S_1<S_2\leq T$ satisfying $|E\cap [S_1,S_2]|>0$, and all $z\in L^2(\Omega;\mathbb{R}^2)$,
\begin{equation}\label{yu-8-18-1}
\|e^{\mathcal{A}S_2}z\|_{L^2(\Omega;\mathbb{R}^2)}
\leq
K(E,\theta,S_1,S_2)\Big(\frac{1}{|E\cap [S_1,S_2]|}\int_{S_1}^{S_2}\chi_E(t)\| \chi_{\mathcal{D}_t} B e^{\mathcal{A}t}z\|_{L^1(\Omega;\mathbb{R})}dt\Big)^{1-\theta}\|z\|_{L^2(\Omega;\mathbb{R}^2)}^{\theta},
\end{equation}
where
\begin{equation*}
K(E,\theta,S_1,S_2):=\frac{M e^{M\big(\frac{S_2}{1-\theta}+\frac{1}{\theta S_1}\big)}}{|E\cap [S_1,S_2]|^{3}}.
\end{equation*}
\end{theorem}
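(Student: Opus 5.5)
Fix $\theta\in(0,1)$, $0<S_1<S_2\le T$ with $|E\cap[S_1,S_2]|>0$, and $z\in L^2(\Omega;\mathbb{R}^2)$ with Fourier coefficients $v_j=(v_{1,j},v_{2,j})^\top$. The plan is to prove form $(i)$ of Lemma~\ref{yu-lemma-9-4-3}, that is, an estimate $\|e^{\mathcal{A}S_2}z\|\le \Pi(\varepsilon^{-\gamma}F_2+\varepsilon\|z\|)$ with $F_2$ the averaged observation. Then Lemma~\ref{yu-lemma-9-4-3} converts it into the multiplicative form \eqref{yu-8-18-1}. The estimate comes from a low/high frequency splitting at a level $\lambda>\lambda_1$. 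The high-frequency part is controlled by dissipation:
\[
\Big\|\sum_{\lambda_j>\lambda}e^{-A\lambda_jS_2}v_je_j\Big\|\le e^{-a\lambda S_2}\|z\|.
\]
The low-frequency part $P_\lambda e^{\mathcal{A}S_2}z$ must be recovered from the time-integrated observation of the first component. The key point is that pointwise-in-time observation fails, and the substitute is the Remez inequality, Lemma~\ref{a4}, applied in the time variable.

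\textbf{Step 1 (spatial observability at fixed time).} For a.e.\ $t\in E\cap[S_1,S_2]$, apply hypothesis $(c)$ to the first component of $P_\lambda e^{\mathcal{A}t}z$. By \eqref{yu-9-2-12} its coefficients are
\[
c_j(t)=e^{-a\lambda_jt}\big(\cos(b\lambda_jt)v_{1,j}+\sin(b\lambda_jt)v_{2,j}\big).
\]
This gives
\[
\sum_{j\le k_\lambda}|c_j(t)|^2\le De^{D\sqrt\lambda}\|\chi_{\mathcal{D}_t}BP_\lambda e^{\mathcal{A}t}z\|_{L^1}^2.
\]
The truncation error $\|\chi_{\mathcal{D}_t}B(I-P_\lambda)e^{\mathcal{A}t}z\|_{L^1}\le|\Omega|^{1/2}e^{-a\lambda S_1}\|z\|$ is added back. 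Taking square roots and integrating over $E\cap[S_1,S_2]$ yields
\[
\int_{S_1}^{S_2}\chi_E\Big(\sum_{j\le k_\lambda}|c_j(t)|^2\Big)^{1/2}dt\le \sqrt De^{D\sqrt\lambda/2}\Big(\int_{S_1}^{S_2}\chi_E\|\chi_{\mathcal{D}_t}Be^{\mathcal{A}t}z\|_{L^1}dt+Ce^{-a\lambda S_1}|E\cap[S_1,S_2]|\,\|z\|\Big).
\]

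\textbf{Step 2 (recovering both components through the time oscillation).} The left-hand side dominates $\int\chi_E|c_j(t)|dt$ for each $j$. The target is to recover $|v_j|e^{-a\lambda_jS_2}$ for each $j\le k_\lambda$. Write
\[
c_j(t)=e^{-a\lambda_jt}|v_j|\cos(b\lambda_jt-\phi_j).
\]
For fixed $j$ we compare the weight $e^{-a\lambda_jt}\ge e^{-a\lambda_jS_2}$ on $[S_1,S_2]$ with the remaining factor $|v_j|\cos(b\lambda_jt-\phi_j)$. After rescaling time by $b\lambda_j$, this factor is a trigonometric polynomial of degree $1$ in the variable $\tau=b\lambda_jt$ modulo $2\pi$. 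Since $\lambda_j$ is large, the interval $[S_1,S_2]$ covers many periods.

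The main obstacle is applying Remez uniformly in $j$. The set $E$ rescaled to $b\lambda_jE\bmod 2\pi$ may concentrate near the zeros of the cosine, because only its total measure is controlled. I would handle this with a sublevel-set argument directly on $[S_1,S_2]$. For $g(t)=\cos(\beta t-\phi)$, the measure of $\{t\in[S_1,S_2]:|g|\le\delta\}$ is at most $C\delta(S_2-S_1+1/\beta)$ uniformly in $\beta\ge\lambda_1|b|$. This is the content of the $n=1$ case of \eqref{yu-8-22-7}, applied period by period; the period-fragment bookkeeping is where care is needed. Choosing $\delta\sim|E\cap[S_1,S_2]|/(S_2+1)$ gives
\[
\int\chi_E|g|\,dt\ge c\,|E\cap[S_1,S_2]|^2.
\]
Summing over $j\le k_\lambda$ then costs a factor $k_\lambda^{1/2}\lesssim\lambda^{d/4}\le Ce^{\sqrt\lambda}$ by \eqref{yu-nn-1-1}. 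Thus, with $F_2$ the normalized average appearing in \eqref{yu-8-18-1},
\[
\|P_\lambda e^{\mathcal{A}S_2}z\|\le \frac{Ce^{C\sqrt\lambda}}{|E\cap[S_1,S_2]|^{2}}\Big(|E\cap[S_1,S_2]|\,F_2+e^{-a\lambda S_1}\|z\|\Big).
\]
The extra powers of $|E\cap[S_1,S_2]|$ in $K$ come from this step and the normalization. A fallback would be to group several eigenvalues into one trigonometric polynomial of higher degree and use \eqref{yu-8-22-7}, but the per-mode approach seems cleaner.

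\textbf{Step 3 (optimizing $\lambda$).} Combining the two frequency ranges gives
\[
\|e^{\mathcal{A}S_2}z\|\le \frac{Ce^{C\sqrt\lambda}}{|E\cap[S_1,S_2]|^3}F_2+\frac{Ce^{C\sqrt\lambda-a\lambda S_1}}{|E\cap[S_1,S_2]|^{3}}\|z\|.
\]
By $C\sqrt\lambda\le a\lambda S_1/2+C'/S_1$, the second coefficient is at most $Ce^{C'/S_1}e^{-a\lambda S_1/2}$. Given $\varepsilon\in(0,1)$, choose $\lambda$ with $e^{-a\lambda S_1/2}=\varepsilon$; the case $\lambda\le\lambda_1$ is handled trivially. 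Then $e^{C\sqrt\lambda}\le e^{C''/(\theta S_1)}\varepsilon^{-\gamma}$ with $\gamma=\theta/(1-\theta)$, using Young's inequality once more. The factor $e^{MS_2/(1-\theta)}$ absorbs $\|e^{\mathcal{A}S_2}z\|\le\|z\|$-type normalization and the time-length factors. Finally, Lemma~\ref{yu-lemma-9-4-3} with $F_1=\|e^{\mathcal{A}S_2}z\|\le F_3=\|z\|$ gives \eqref{yu-8-18-1}.
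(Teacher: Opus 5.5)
Your proposal is correct and follows the paper's own architecture: low/high frequency splitting, hypothesis $(c)$ applied pointwise for $t\in E$ to the first component of the low-frequency part, a per-mode lower bound for $\int\chi_E|c_j(t)|\,dt$, summation at the cost of $k_\lambda^{1/2}$, optimization in $\lambda$, and Lemma~\ref{yu-lemma-9-4-3}. The one difference is the per-mode bound. The paper rescales, pigeonholes into a single period and a single eighth of $[-\pi,\pi]$, and applies Lemma~\ref{a4}, paying with the fourth power of $|E\cap[S_1,S_2]|$ and a polynomial factor $\lambda$; your elementary sublevel count (the set $\{|\cos(\beta t-\phi)|\le\delta\}$ has measure $4|\beta|^{-1}\arcsin\delta\le 2\pi\delta/|\beta|$ per period, and $[S_1,S_2]$ meets at most $|\beta|(S_2-S_1)/(2\pi)+2$ periods) gives $\int_{S_1}^{S_2}\chi_E|\cos(\beta t-\phi)|\,dt\ge c\,|E\cap[S_1,S_2]|^2/(S_2+1)$ with no loss in $\lambda_j$. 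You dropped the factor $(S_2+1)^{-1}$ there, but like the stray powers of $|E\cap[S_1,S_2]|\le S_2$, it is harmlessly absorbed into $e^{MS_2/(1-\theta)}$.
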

\begin{proof}
First, we emphasize that, under Assumptions $(a)$ and $(b)$, the integral on the right-hand side of inequality \eqref{yu-8-18-1} is well defined for any $z\in L^2(\Omega;\mathbb{R}^2)$.

Our strategy is as follows: We first decompose the semigroup $\{e^{\mathcal{A}t}\}_{t\geq0}$
 into high- and low-frequency components. Next, we establish a low-frequency observability inequality for the observation operator $\chi_E\chi_{\mathcal{D}_t}B$ $(t\in (0,T))$,  with an explicit observability constant depending on the measure of the observation set.
 Finally,  we combine this inequality with the decay property of the high-frequency component to derive the desired result.
    The Remez-type inequality in Lemma~\ref{a4} plays a crucial role in establishing the low-frequency observability inequality.

The proof proceeds in four steps. First,  {\it we arbitrarily fix $S_1,S_2\in (0,T]$ with
$S_1<S_2$ and $|E\cap [S_1,S_2]|>0$, as well as $\theta \in(0,1)$.}  Set
      \begin{equation}\label{yu-8-20-10}
    E_{S_1,S_2}:=E\cap [S_1,S_2],\;\;\widehat{E}_{S_1,S_2}:=E_{S_1,S_2}-S_1\;\;\mbox{and}\;\;S:=S_2-S_1.
\end{equation}
    It is clear that $\widehat{E}_{S_1,S_2}\subset [0,S]$ and
\begin{equation}\label{yu-8-20-10-b}
    |\widehat{E}_{S_1,S_2}|=|E_{S_1,S_2}|>0.
\end{equation}
For each $\lambda>\lambda_1$, we define the following low-frequency  subspace:
$$
    H_{\lambda}:=\Big\{f=\sum_{i=1}^{k_\lambda}a_ie_i
    :\{a_i\}_{i=1}^{k_\lambda}\subset\mathbb{R}^2\Big\},
$$
where $k_\lambda$ is defined by \eqref{yu-8-19-00}.
\vskip 5pt
\noindent  \emph{Step 1: We prove that there exists a constant $N_1=N_1(b)>0$ such that for all $\lambda> \lambda_1$,
\begin{equation}\label{yu-8-20-11}
    \|e^{\mathcal{A}S}\varphi\|_{L^2(\Omega;\mathbb{R}^2)}\leq \frac{N_1(1+S)^4(1+\lambda)^{\frac{d}{4}}\lambda}{|\widehat{E}_{S_1,S_2}|^4}
    \int_{0}^{S}\chi_{\widehat{E}_{S_1,S_2}}(t)\|B e^{\mathcal{A}t}\varphi\|_{L^2(\Omega;\mathbb{R})}dt\;\;\mbox{for any}\;\;\varphi\in H_\lambda.
\end{equation}}
\par
   To this end, {\it we arbitrarily fix $\lambda>\lambda_1$ and $\varphi\in H_\lambda$.}
We write
    \begin{equation}\label{yu-2-26-1}\varphi:=\Big(
\sum_{j=1}^{k_\lambda}\varphi_{1,j}e_j,
\sum_{j=1}^{k_\lambda}\varphi_{2,j}e_j
\Big)^\top,
\end{equation}
where
    $\{(\varphi_{1,j},\varphi_{2,j})^\top\}_{j=1}^{k_\lambda}\subset \mathbb{R}^2$.
Then, by \eqref{yu-9-2-11} and \eqref{yu-9-2-12}, we have
\begin{equation}\label{a3}
B e^{\mathcal{A}t}\varphi=\sum_{j=1}^{k_\lambda}v_j(t)e_j,
\end{equation}
where
\begin{equation}\label{yu-8-9-1}
v_j(t):=e^{-a\lambda_j t}\Big(\varphi_{1,j}\cos(\lambda_jbt)
+\varphi_{2,j}\sin(\lambda_jbt)\Big),\quad t\in[0,T].
\end{equation}
Now,  {\it we arbitrarily fix $j\in\{1,2,\ldots,k_\lambda\}$.}
  By \eqref{yu-8-20-10-b} and \eqref{yu-8-9-1}, we obtain
\begin{eqnarray}\label{yu-8-11-1-bb}
    &\;&\int_0^S\chi_{\widehat{E}_{S_1,S_2}}(t)|v_j(t)|dt\nonumber\\
    &=&(\lambda_jb)^{-1}\int_0^{\lambda_jbS}\chi_{\widehat{E}^j_{S_1,S_2}}(\sigma)e^{-ab^{-1}\sigma}|\varphi_{1,j}\cos \sigma+\varphi_{2,j}\sin\sigma|d\sigma\nonumber\\
    &=&
\begin{cases}
    (\lambda_jb)^{-1}\int_0^{\lambda_jbS}\chi_{\widehat{E}^j_{S_1,S_2}}(\sigma)e^{-ab^{-1}\sigma}|\varphi_{1,j}\cos \sigma+\varphi_{2,j}\sin\sigma|d\sigma&\mbox{if}\;\;b>0,\\
    (\lambda_j|b|)^{-1}\int_0^{\lambda_j|b|S}\chi_{\widehat{E}^j_{S_1,S_2}}(\sigma)e^{-a|b|^{-1}\sigma}|\varphi_{1,j}\cos \sigma-\varphi_{2,j}\sin\sigma|d\sigma&\mbox{if}\;\;b<0,
\end{cases}
\end{eqnarray}
    where
$$\widehat{E}_{S_1,S_2}^j:= \Big\{\vartheta=\lambda_j|b|t:t\in \widehat{E}_{S_1,S_2}\Big\}.
$$
   We  use \eqref{yu-8-11-1-bb} to prove \eqref{yu-8-20-11}. First, we note that it suffices to consider
    \eqref{yu-8-11-1-bb} for the case  $b>0$.  Indeed, by
    \eqref{yu-9-2-12}, we have
   $$\|e^{\mathcal{A} t}(\varphi_1,\varphi_2)^\top\|_{L^2(\Omega;\mathbb{R}^2)}=\|e^{\mathcal{A} t}(\varphi_1,-\varphi_2)^\top\|_{L^2(\Omega;\mathbb{R}^2)}\;\;\mbox{for any}\;\;(\varphi_1,\varphi_2)^\top \in L^2(\Omega;\mathbb{R}^2).
$$
 Thus, via the transformation: $(\varphi_1,\varphi_2)^\top\rightarrow (\varphi_1,-\varphi_2)^\top$,  the case
 $b<0$ can be reduced to the case  $b>0$.
 Therefore, in what follows, we  assume  that $b>0$.
\par
    From \eqref{yu-8-11-1-bb}, we have
\begin{equation}\label{yu-8-11-1}
    \int_0^S\chi_{\widehat{E}_{S_1,S_2}}(t)|v_j(t)|dt\geq(\lambda_jb)^{-1}\big((\varphi_{1,j})^2+(\varphi_{2,j})^2\big)^{\frac{1}{2}}
    e^{-a\lambda_jS}\int_0^{\lambda_jbS}\chi_{\widehat{E}^j_{S_1,S_2}}(\sigma)|\sin(\sigma+\delta_j)|d\sigma,
\end{equation}
    where $\delta_j\in[-\frac{\pi}{2},\frac{\pi}{2}]$ is  defined by
\begin{equation*}\label{yu-8-11-2}
    \delta_j:=
\begin{cases}
   \mbox{arctan}\big(\varphi_{1,j}(\varphi_{2,j})^{-1}\big)&\mbox{if}\;\;\varphi_{2,j}\neq 0,\\
   \frac{\pi}{2}&\mbox{if}\;\;\varphi_{2,j}=0,\;\;\varphi_{1,j}>0,\\
    -\frac{\pi}{2}&\mbox{if}\;\;\varphi_{2,j}=0,\;\;\varphi_{1,j}<0,\\
    0&\mbox{if}\;\;\varphi_{1,j}=\varphi_{2,j}=0.
\end{cases}
\end{equation*}
We next estimate the integral on the right-hand side of \eqref{yu-8-11-1}.
Let $\widehat{F}^j_{S_1,S_2}:=\widehat{E}^j_{S_1,S_2}+\delta_j$. Then
\begin{equation}\label{yu-8-11-3}
    \int_0^{\lambda_jbS}\chi_{\widehat{E}^j_{S_1,S_2}}(\sigma)|\sin(\sigma+\delta_j)|d\sigma
    =\int_{\delta_j}^{\lambda_jbS+\delta_j}
    \chi_{\widehat{F}^j_{S_1,S_2}}(\eta)|\sin \eta|d\eta.
\end{equation}
\par
  A key step in estimating the integral on the right-hand side of \eqref{yu-8-11-1} is to prove that
\begin{equation}\label{yu-8-11-4}
    2^{-50}\Big(\lambda_jbS+\frac{\pi}{2}\Big)^{-4}|\widehat{F}^j_{S_1,S_2}|^4\leq
     \int_{\delta_j}^{\lambda_jbS+\delta_j}
    \chi_{\widehat{F}^j_{S_1,S_2}}(\eta)|\sin \eta|d\eta.
\end{equation}
This is achieved by considering the following two cases:

   \vskip 5pt
    \emph{In the case that
    $\lambda_jbS+\delta_j\leq \pi$,}
      we use the fact that $\delta_j\in[-\frac{\pi}{2},\frac{\pi}{2}]$ to obtain
  \begin{equation}\label{yu-8-12-1}
    \int_{\delta_j}^{\lambda_jbS+\delta_j}
    \chi_{\widehat{F}^j_{S_1,S_2}}(\eta)|\sin \eta|d\eta=\int_{-\pi}^{\pi} \chi_{\widehat{F}^j_{S_1,S_2}}(\eta)|\sin \eta|d\eta.
\end{equation}
   Let
\begin{equation}\label{yu-8-12-1-bb}
    I_i:=\Big[\big(-1+\frac{i}{4}\Big)\pi,\Big(-1+\frac{i+1}{4}\Big)\pi\Big],\;\;i\in\{0,1,\ldots, 7\}.
\end{equation}
    Then, $[-\pi,\pi]=\cup_{i=0}^7 I_i$. Choose
        $i^*\in \{0,1,\ldots, 7\}$ such that
$|\widehat{F}^j_{S_1,S_2}\cap I_{i^*}|=\max_{0\leq i\leq 7}|\widehat{F}^j_{S_1,S_2}\cap I_{i}|$.
      It is clear that
\begin{equation}\label{yu-8-12-2}
    |\widehat{F}^j_{S_1,S_2}\cap I_{i^*}|<\frac{\pi}{3}\;\;\mbox{and}\;\;|\widehat{F}^j_{S_1,S_2}\cap I_{i^*}|\geq \frac{1}{8}|\widehat{F}^j_{S_1,S_2}|.
\end{equation}
  Since  $\int_{-\pi}^{\pi} |\sin \eta|d\eta=4$ and  $\sin(\frac{x}{4})-\frac{x}{16}>0$ for each $x\in (0,\frac{\pi}{3})$, it follows from \eqref{yu-8-12-1},
      Lemma \ref{a4} and \eqref{yu-8-12-2} that
\begin{eqnarray*}\label{yu-8-12-3}
     \int_{\delta_j}^{\lambda_jbS+\delta_j}
    \chi_{\widehat{F}^j_{S_1,S_2}}(\eta)|\sin \eta|d\eta
    &=& \sum_{i=0}^7\int_{-\pi}^{\pi}\chi_{\widehat{F}^j_{S_1,S_2}\cap I_{i}}(\eta)|\sin \eta|d\eta\geq \int_{-\pi}^{\pi} \chi_{\widehat{F}^j_{S_1,S_2}\cap I_{i^*}}(\eta)|\sin \eta|d\eta\nonumber\\
    &\geq& 2^{-24}\Big(\sin\Big(\frac{1}{4}|\widehat{F}^j_{S_1,S_2}\cap I_{i^*}|\Big)\Big)^4\int_{-\pi}^{\pi} |\sin \eta|d\eta
    \geq 2^{-50}|\widehat{F}^j_{S_1,S_2}|^4
   \nonumber\\
    &\geq&2^{-50}\Big(\lambda_jbS+\frac{\pi}{2}\Big)^{-4}|\widehat{F}^j_{S_1,S_2}|^4.
\end{eqnarray*}
   Hence, \eqref{yu-8-11-4} holds in this case.

  \vskip 5pt
\emph{In the case that
    $\lambda_jbS+\delta_j> \pi$,} we set
$$
    m_j:=\Big[\frac{\lambda_jbS+\delta_j-\pi}{2\pi}\Big]_{\mathbb{Z}}.
$$
    Then
    $m_j\geq 0$, and
    \begin{equation}\label{yu-8-13-1}
    (2m_j+1)\pi\leq \lambda_jbS+\delta_j<(2m_j+3)\pi.
\end{equation}
    Moreover,
\begin{equation}\label{yu-8-13-2}
    \int_{\delta_j}^{\lambda_jbS+\delta_j}
    \chi_{\widehat{F}^j_{S_1,S_2}}(\eta)|\sin \eta|d\eta=\sum_{k=-1}^{m_j}\int_{(2k+1)\pi}^{(2k+3)\pi}
    \chi_{\widehat{F}^{j,k}_{S_1,S_2}}(\eta)|\sin\eta| d\eta,
\end{equation}
    where
$$
    \widehat{F}^{j,k}_{S_1,S_2}:=\widehat{F}^j_{S_1,S_2}\cap [(2k+1)\pi, (2k+3)\pi],\;\;k\in\{-1,0,\ldots,m_j\}.
$$
  Choose $k^*\in\{-1,0,\ldots,m_j\}$ such that
$|\widehat{F}^{j,k^*}_{S_1,S_2}|=\max_{-1\leq k\leq m_j}|\widehat{F}^{j,k}_{S_1,S_2}|$. Then
    \begin{equation}\label{yu-8-13-4}
     |\widehat{F}^{j,k^*}_{S_1,S_2}|\geq (m_j+2)^{-1}|\widehat{F}^j_{S_1,S_2}|.
\end{equation}
   We now  claim that
\begin{equation}\label{yu-8-13-5}
\int_{(2k^*+1)\pi}^{(2k^*+3)\pi}
    \chi_{\widehat{F}^{j,k^*}_{S_1,S_2}}(\eta)|\sin\eta| d\eta
    \geq 2^{-50}|\widehat{F}^{j,k^*}_{S_1,S_2}|^4.
\end{equation}
    To this end, we define
$$
    G^{j, k^*}:=\widehat{F}^{j,k^*}_{S_1,S_2}-2(k^*+1)\pi.
$$
      Then $|G^{j,k^*}|=|\widehat{F}^{j,k^*}_{S_1,S_2}|$,
     $G^{j,k^*}\subset[-\pi,\pi]$, and
\begin{equation}\label{yu-8-13-9}
    \int_{(2k^*+1)\pi}^{(2k^*+3)\pi}
    \chi_{\widehat{F}^{j,k^*}_{S_1,S_2}}(\eta)|\sin\eta| d\eta=\int_{-\pi}^{\pi}
    \chi_{G^{j,k^*}}(\eta)|\sin\eta| d\eta.
\end{equation}
    Let
    $i^*\in\{0,\ldots,7\}$ be such that
$|G^{j,k^*}\cap I_{i^*}|
    =\max_{0\leq i\leq 7}|G^{j,k^*}\cap I_{i}|$,
     where $I_i$ for $i\in\{0,\ldots,7\}$ is given in \eqref{yu-8-12-1-bb}. Then
\begin{equation}\label{yu-8-13-7}
    |G^{j,k^*}\cap I_{i^*}|<\frac{\pi}{3}\;\;\mbox{and}\;\;|G^{j,k^*}\cap I_{i^*}|\geq \frac{1}{8}|G^{j,k^*}|.
\end{equation}
    It follows from  \eqref{yu-8-13-9},
     Lemma \ref{a4} and \eqref{yu-8-13-7} that
\begin{eqnarray*}\label{yu-8-13-8}
    \int_{(2k^*+1)\pi}^{(2k^*+3)\pi}
    \chi_{\widehat{F}^{j,k^*}_{S_1,S_2}}(\eta)|\sin\eta| d\eta
    &=&
    \int_{-\pi}^{\pi}\chi_{G^{j,k^*}\cap I_{i^*}}(\eta)|\sin\eta|d\eta\geq \nonumber\\
    &\geq& 2^{-22}\Big(\sin\Big(\frac{1}{4}|G^{j,k^*}\cap I_{i^*}|\Big)\Big)^4
    \geq 2^{-50}|G^{j,k^*}|^4=2^{-50}|\widehat{F}^{j,k^*}_{S_1,S_2}|^4,
\end{eqnarray*}
i.e.,     \eqref{yu-8-13-5} holds.

 Now, it follows from \eqref{yu-8-13-1}, \eqref{yu-8-13-2}, \eqref{yu-8-13-4}
    and \eqref{yu-8-13-5} that
\begin{eqnarray*}\label{yu-8-13-11}
     \int_{\delta_j}^{\lambda_jbS+\delta_j}
    \chi_{\widehat{F}^j_{S_1,S_2}}(\eta)|\sin \eta|d\eta\geq \int_{(2k^*+1)\pi}^{(2k^*+3)\pi}
    \chi_{\widehat{F}^{j,k^*}_{S_1,S_2}}(\eta)|\sin\eta| d\eta
    \geq 2^{-50}(m_j+2)^{-4}|\widehat{F}^j_{S_1,S_2}|^4\nonumber\\
    \geq 2^{-50}
    \Big(\lambda_jbS+\frac{\pi}{2}\Big)^{-4}|\widehat{F}^j_{S_1,S_2}|^4,
\end{eqnarray*}
  which yields  \eqref{yu-8-11-4} in this case. Therefore, combining the two cases, \eqref{yu-8-11-4} holds.
\par
   Meanwhile, it follows from the definitions of $\widehat{F}^j_{S_1,S_2}$ and $\widehat{E}^j_{S_1,S_2}$ that
$$|\widehat{F}^j_{S_1,S_2}|=|\widehat{E}^j_{S_1,S_2}|=\lambda_jb|\widehat{E}_{S_1,S_2}|.$$
    Hence,
\begin{equation*}\label{yu-8-13-13}
    \Big(\lambda_jbS+\frac{\pi}{2}\Big)^{-1}|\widehat{F}^j_{S_1,S_2}|=
    \Big(S+\frac{\pi}{2\lambda_jb}\Big)^{-1}|\widehat{E}_{S_1,S_2}|
    \geq \Big(S+\frac{\pi}{2\lambda_1b}\Big)^{-1}|\widehat{E}_{S_1,S_2}|.
\end{equation*}
    This, together with \eqref{yu-8-11-4}, implies
\begin{equation}\label{yu-8-13-14}
    2^{-50} \Big(S+\frac{\pi}{2\lambda_1b}\Big)^{-4}|\widehat{E}_{S_1,S_2}|^4\leq \int_{\delta_j}^{\lambda_jbS+\delta_j}
    \chi_{\widehat{F}^j_{S_1,S_2}}(\eta)|\sin\eta|d\eta.
\end{equation}

    Now,  by \eqref{yu-9-2-12}, \eqref{yu-8-11-1}, \eqref{yu-8-11-3} and \eqref{yu-8-13-14}, we have
\begin{eqnarray}\label{yu-8-13-15}
    &\;&k_\lambda\Big(\int_0^S\chi_{\widehat{E}_{S_1,S_2}}(t)\Big(\sum_{j=1}^{k_\lambda}|v_j(t)|^2\Big)^{\frac{1}{2}}
    dt\Big)^2\nonumber\\
    &\geq& \sum_{j=1}^{k_\lambda}\Big(\int_0^S\chi_{\widehat{E}_{S_1,S_2}}(t)|v_j(t)|dt\Big)^2\nonumber\\
    &\geq&2^{-100}(\lambda b)^{-2}\Big(S+\frac{\pi}{2\lambda_1b}\Big)^{-8}|\widehat{E}_{S_1,S_2}|^8
    \sum_{j=1}^{k_\lambda}((\varphi_{1,j})^2+(\varphi_{2,j})^2)e^{-2\lambda_jaS}\nonumber\\
    &=&2^{-100}(\lambda b)^{-2}\Big(S+\frac{\pi}{2\lambda_1b}\Big)^{-8}|\widehat{E}_{S_1,S_2}|^8
    \|e^{\mathcal{A}S}\varphi\|^2_{L^2(\Omega;\mathbb{R}^2)}.
\end{eqnarray}
However, it follows from \eqref{yu-nn-1-1} that there exists
a constant  $C_1>0$ (which only depends on $d$ and $\Omega$) such that
$k_\lambda\leq C_1(1+\lambda^{\frac{d}{2}})$ for all $\lambda>0$.
    This, together with \eqref{yu-8-13-15}, yields  a  constant $C_2=C_2(b)>0$ such that
\begin{eqnarray*}\label{yu-8-14-2}
    \|e^{\mathcal{A}S}\varphi\|_{L^2(\Omega;\mathbb{R}^2)}\leq \frac{C_2(1+S)^4(1+\lambda)^{\frac{d}{4}}\lambda}{|\widehat{E}_{S_1,S_2}|^4}
    \int_0^S\chi_{\widehat{E}_{S_1,S_2}}(t)\Big(\sum_{j=1}^{k_\lambda}|v_j(t)|^2\Big)^{\frac{1}{2}}dt.
\end{eqnarray*}
 The above, combined  with  \eqref{a3}, yields \eqref{yu-8-20-11} with $N_1=C_2$.

\vskip 5pt
 \noindent  \emph{Step 2: We show that there exists a constant $N_2=N_2(b,D)>0$ such that for all $\lambda> \lambda_1$,
\begin{equation}\label{yu-8-14-00}
\|e^{\mathcal{A}S_2}\varphi\|_{L^2(\Omega;\mathbb{R}^2)}
\leq \frac{N_2(1+S_2-S_1)^4\lambda e^{N_2\sqrt{\lambda}}}{|E_{S_1,S_2}|^4}
\int_{S_1}^{S_2}\chi_E(t)\| \chi_{\mathcal{D}_t} B e^{\mathcal{A}t}\varphi\|_{L^1(\Omega;\mathbb{R})}dt
\;\;\mbox{for all}\;\;\varphi\in H_\lambda.
\end{equation}}

{\it We arbitrarily fix $\lambda> \lambda_1$ and $\varphi\in H_\lambda$, which is given by \eqref{yu-2-26-1}. Therefore,  by \eqref{yu-9-2-11} and \eqref{yu-9-2-12}, we have \eqref{a3}.}
\par
    We first show that
\begin{equation}\label{yu-8-9-3}
    \int_{S_1}^{S_2}\chi_E(t)\|B e^{\mathcal{A}t}\varphi\|_{L^2(\Omega;\mathbb{R})}dt
    \leq De^{D\sqrt{\lambda}}\int_{S_1}^{S_2}\chi_E(t)\|\chi_{\mathcal{D}_t} B e^{\mathcal{A}t}\varphi\|_{L^1(\Omega;\mathbb{R})}dt.
\end{equation}
Indeed, by Assumption $(c)$ and \eqref{a3}, we have that for a.e. $t\in E$,
\begin{eqnarray*}\label{yu-8-9-2}
&\;&\chi_{E}(t)\|B e^{\mathcal{A}t}\varphi\|^2_{L^2(\Omega;\mathbb{R})}=\chi_{E}(t)
\Big(\sum_{j=1}^{k_\lambda}|v_j(t)|^2\Big)\nonumber\\
&\leq& D^2e^{2D\sqrt{\lambda}}\chi_{E}(t)\Big\|\chi_{\mathcal{D}_t}
\sum_{j=1}^{k_\lambda}v_j(t)e_j\Big\|^2_{L^1(\Omega;\mathbb{R})}
=D^2e^{2D\sqrt{\lambda}}\chi_{E}(t)\|\chi_{\mathcal{D}_t} B e^{\mathcal{A}t}\varphi\|^2_{L^1(\Omega;\mathbb{R})},
\end{eqnarray*}
 which yields \eqref{yu-8-9-3}.
\par
    We now prove \eqref{yu-8-14-00}.
  Replacing $\varphi$  with $e^{\mathcal{A}S_1}\varphi$ in  \eqref{yu-8-20-11}, we obtain
$$
    \|e^{\mathcal{A}(S+S_1)}\varphi\|_{L^2(\Omega;\mathbb{R}^2)}\leq \frac{N_1(1+S)^4(1+\lambda)^{\frac{d}{4}}\lambda}{|\widehat{E}_{S_1,S_2}|^4}
    \int_{0}^{S}\chi_{\widehat{E}_{S_1,S_2}}(t)\|B e^{\mathcal{A}(t+S_1)}\varphi\|_{L^2(\Omega;\mathbb{R})}dt.
$$
 Combining this with \eqref{yu-8-20-10}, \eqref{yu-8-20-10-b} and the change of variable  $t\to t-S_1$, we get
\begin{equation}\label{yu-8-9-4}
   \|e^{\mathcal{A}S_2}\varphi\|_{L^2(\Omega;\mathbb{R}^2)}\leq \frac{N_1(1+S_2-S_1)^4(1+\lambda)^{\frac{d}{4}}\lambda}{|E_{S_1,S_2}|^4}
    \int_{S_1}^{S_2}\chi_E(t)\|B e^{\mathcal{A}t}\varphi\|_{L^2(\Omega;\mathbb{R})}dt.
\end{equation}
Now, \eqref{yu-8-14-00} follows immediately from
 \eqref{yu-8-9-3} and \eqref{yu-8-9-4} with $N_2:=(N_1+1)D$.

\vskip 5pt
\noindent \emph{Step 3: We show that there exist two positive constants $C_3=C_3(b,D)$ and $C_4=C_4(a,b,D)$
such that for all $\lambda> \lambda_1$ and $z\in L^2(\Omega;\mathbb{R}^2)$,
\begin{eqnarray}\label{yu-8-15-8}
    \|e^{\mathcal{A}S_2}z\|_{L^2(\Omega;\mathbb{R}^2)}
     \leq \frac{N_3}{|E_{S_1,S_2}|^3}
   \Big[\frac{1}{\varepsilon(\lambda)^\gamma}\Big(\frac{1}{|E_{S_1,S_2}|}\int_{S_1}^{S_2}
    \chi_E(t)\|\chi_{\mathcal{D}_t} B e^{\mathcal{A}t}z\|_{L^1(\Omega;\mathbb{R})}dt\Big)
    +\varepsilon(\lambda)\|z\|_{L^2(\Omega;\mathbb{R}^2)}\Big],
\end{eqnarray}
    where $\varepsilon(\lambda):=e^{-\frac{1}{2}a(\lambda-\lambda_1) S_1}$,  $\gamma=\theta(1-\theta)^{-1}$,
    and
    \begin{eqnarray}\label{yu-2-26-2}
    N_3:= C_4(1+S_2)^4e^{\big(\frac{a\lambda_1S_2}{2(1-\theta)}+\frac{(C_3)^2}{a\theta S_1}\big)}.
    \end{eqnarray}}

 To this end, {\it we arbitrarily fix  $z\in L^2(\Omega;\mathbb{R}^2)$ and $\lambda>\lambda_1$.}
    We write
\begin{equation}\label{yu-8-14-3}
   z=z^1+z^2:=\sum_{j=1}^{k_\lambda}z_je_j +\sum_{j=k_\lambda+1}^{+\infty}z_je_j,
\end{equation}
    where $\{z_j\}_{j\in\mathbb{N}^+}\subset\mathbb{R}^2$ are the Fourier coefficients of $z$.
   We state two facts: First,  by
       \eqref{yu-9-2-12},
\begin{equation*}\label{yu-8-15-1}
   \|e^{\mathcal{A}t}z^2\|_{L^2(\Omega;\mathbb{R}^2)}\leq e^{-a\lambda t}\|z^2\|_{L^2(\Omega;\mathbb{R}^2)} \;\;\mbox{for all}\;\;t\in[0,T];
\end{equation*}
    Second, applying  \eqref{yu-8-14-00} to $z^1$ (i.e., letting $\varphi=z^1$  in \eqref{yu-8-14-00}) gives
\begin{eqnarray*}\label{yu-8-15-2}
    \|e^{\mathcal{A}S_2}z^1\|_{L^2(\Omega;\mathbb{R}^2)}
       \leq\frac{N_2(1+S_2-S_1)^4\lambda e^{N_2\sqrt{\lambda}}}{|E_{S_1,S_2}|^4}\int_{S_1}^{S_2}\chi_E(t)\|\chi_{\mathcal{D}_t} B e^{\mathcal{A}t}z^1\|_{L^1(\Omega;\mathbb{R})}dt.
\end{eqnarray*}
   These facts, together with \eqref{yu-8-14-3}, imply
\begin{eqnarray}\label{yu-8-15-3}
    \|e^{\mathcal{A}S_2}z\|_{L^2(\Omega;\mathbb{R}^2)}
    &\leq&\|e^{\mathcal{A}S_2}z^1\|_{L^2(\Omega;\mathbb{R}^2)}+e^{-a\lambda S_2}\|z^2\|_{L^2(\Omega;\mathbb{R}^2)}\nonumber\\
    &\leq&\frac{N_2(1+S_2-S_1)^4\lambda e^{N_2\sqrt{\lambda}}}{|E_{S_1,S_2}|^4}\biggl(\int_{S_1}^{S_2}\chi_E(t)\|\chi_{\mathcal{D}_t} B e^{\mathcal{A}t}z\|_{L^1(\Omega;\mathbb{R})}dt\nonumber\\
    &\;&+\int_{S_1}^{S_2}\chi_E(t)\|\chi_{\mathcal{D}_t} B e^{\mathcal{A}t}z^2\|_{L^1(\Omega;\mathbb{R})}dt\biggl)+e^{-a\lambda S_2}\|z^2\|_{L^2(\Omega;\mathbb{R}^2)}\nonumber\\
    &\leq& \frac{N_2(1+S_2-S_1)^4\lambda e^{N_2\sqrt{\lambda} }}{|E_{S_1,S_2}|^4}\int_{S_1}^{S_2}\chi_E(t)\|\chi_{\mathcal{D}_t} B e^{\mathcal{A}t}z\|_{L^1(\Omega;\mathbb{R})}dt  \nonumber\\
    &\;&  +\Big(\frac{N_2|\Omega|^{\frac{1}{2}}(1+S_2-S_1)^4\lambda e^{N_2\sqrt{\lambda}-a\lambda S_1}}{|E_{S_1,S_2}|^3}
    +e^{-a\lambda S_2}\Big)\|z\|_{L^2(\Omega;\mathbb{R}^2)}.
\end{eqnarray}
  However, it is easy to verify that there exists a constant $C_3=C_3(b,D)>0$ such that
\begin{equation*}\label{yu-8-15-4}
\begin{cases}
    N_2\lambda e^{N_2\sqrt{\lambda}}\leq C_3e^{C_3\sqrt{\lambda}},\\
    N_2|\Omega|^{\frac{1}{2}}(1+S_2-S_1)^4\lambda e^{N_2\sqrt{\lambda}-a\lambda S_1}+|E_{S_1,S_2}|^3e^{-a\lambda S_2}
    \leq
    C_3(1+S_2)^4e^{C_3\sqrt{\lambda}-a\lambda S_1}.
\end{cases}
\end{equation*}
  This, along with  \eqref{yu-8-15-3}, yields
\begin{eqnarray}\label{yu-8-15-5}
    &\;&\|e^{\mathcal{A}S_2}z\|_{L^2(\Omega;\mathbb{R}^2)}\nonumber\\
    &\leq& \frac{C_3(1+S_2)^4}{|E_{S_1,S_2}|^3}
\Big(\frac{e^{C_3\sqrt{\lambda}}}{|E_{S_1,S_2}|}\int_{S_1}^{S_2}\chi_E(t)\|\chi_{\mathcal{D}_t} B e^{\mathcal{A}t}z\|_{L^1(\Omega;\mathbb{R})}dt
    +e^{C_3\sqrt{\lambda}-a\lambda S_1}\|z\|_{L^2(\Omega;\mathbb{R}^2)}\Big).
\end{eqnarray}
   Meanwhile, since  $\gamma=\theta(1-\theta)^{-1}$,  after some elementary calculations, we have
\begin{equation*}\label{yu-8-15-7}
    C_3\sqrt{\lambda}-\frac{1}{2}a\lambda S_1\leq \frac{(C_3)^2}{aS_1}\leq \frac{(C_3)^2}{a\theta S_1}
    \;\;\mbox{and}\;\;C_3\sqrt{\lambda}\leq \frac{1}{2}a\gamma \lambda S_1+\frac{(C_3)^2}{2a\gamma S_1}
    \leq \frac{1}{2}a\gamma \lambda S_1+\frac{(C_3)^2}{a\theta S_1}.
\end{equation*}
   Combining these estimates with  \eqref{yu-8-15-5} yields \eqref{yu-8-15-8} for some constant $C_4=C_4(a,b,D)>0$.

    \vskip 5pt
   \noindent \emph{Step 4: We  show  inequality \eqref{yu-8-18-1}.}

   We now use Lemma \ref{yu-lemma-9-4-3} to prove inequality \eqref{yu-8-18-1},
where $$H=L^2(\Omega;\mathbb{R}^2),\quad F_1(\cdot)=\|e^{\mathcal{A}S_2}\cdot\|_{L^2(\Omega;\mathbb{R}^2)},$$
$$
F_2(\cdot)=\frac{1}{|E_{S_1,S_2}|}\int_{S_1}^{S_2}\chi_E(t)\|\chi_{\mathcal{D}_t} B e^{\mathcal{A}t}\cdot\|_{L^1(\Omega;\mathbb{R})}dt\;\;\mbox{and}\;\;
 F_3(\cdot)=\|\cdot\|_{L^2(\Omega;\mathbb{R}^2)}.$$
\par
Since the range of the function $\lambda\mapsto \varepsilon(\lambda):=e^{-\frac{1}{2}a(\lambda-\lambda_1) S_1}$ for $\lambda>\lambda_1$ is $(0,1)$, it follows from \eqref{yu-8-15-8} that the statement $(i)$ in Lemma \ref{yu-lemma-9-4-3} holds with
     $\Pi_1=N_3/|E_{S_1,S_2}|^3$.
By the equivalence stated in Lemma \ref{yu-lemma-9-4-3}, we have that for all $z\in L^2(\Omega;\mathbb{R}^2)$,
$$
    \|e^{\mathcal{A}S_2}z\|_{L^2(\Omega;\mathbb{R}^2)}
\leq\frac{2N_3}{|E_{S_1,S_2}|^3}\Big(\frac{1}{|E\cap [S_1,S_2]|}\int_{S_1}^{S_2}\chi_E(t)\| \chi_{\mathcal{D}_t} B e^{\mathcal{A}t}z\|_{L^1(\Omega;\mathbb{R})}dt\Big)^{1-\theta}\|z\|_{L^2(\Omega;\mathbb{R}^2)}^{\theta}.
$$
  From \eqref{yu-2-26-2},  there exists a positive constant $\widehat{M}$ such that
   $2N_3\leq \widehat{M} e^{\widehat{M}\left(\frac{S_2}{1-\theta}+\frac{1}{\theta S_1}\right)}$.
   This, along with the definition of $E_{S_1,S_2}$ (see \eqref{yu-8-20-10}), yields inequality \eqref{yu-8-18-1} with $M=\widehat{M}$.
   We complete the proof.
       \end{proof}

\begin{remark}\label{yu-remark-9-2-1}
     Theorem \ref{yu-theorem-8-18-1} establishes an integral-type interpolation observability inequality for the strongly coupled parabolic system \eqref{yu-8-28-1} with single-component observation over measurable sets.
For weakly coupled parabolic systems, inequalities of this type over open subsets have been established in \cite[Lemma 2.6]{Qin-Wang-Yu}.
By contrast, to the best of our knowledge, the validity of such inequalities for strongly coupled parabolic systems has not been addressed in the existing literature.
Against this background, the following two issues are of particular interest:
   \begin{enumerate}
  \item [$(P1)$] Does the following pointwise-in-time interpolation observability inequality hold?
\begin{equation}\label{yu-9-2-1}
    \|e^{\mathcal{A}T}z\|_{L^2(\Omega;\mathbb{R}^2)}\leq C
    \left(\sum_{i=1}^m\|\chi_{\omega_i}B e^{\mathcal{A}S_i}z\|_{L^1(\Omega;\mathbb{R})}\right)^{1-\theta}
    \|z\|^\theta_{L^2(\Omega;\mathbb{R}^2)}\;\;\mbox{for all}\;\;z\in L^2(\Omega;\mathbb{R}^2),
\end{equation}
      where $m\in\mathbb{N}^+$,  $\omega_i$ ($i=1,2,\ldots,m$) are measurable subsets of $\Omega$ with positive measure, $\{S_i\}_{i=1}^m\subset(0,T)$, $\theta\in(0,1)$, and $C>0$.
\par
    When $m=1$, inequality \eqref{yu-9-2-1} is referred to as the single-time-point interpolation observability inequality; when $m>1$, it is referred to as the multi-time-point interpolation observability inequality.
\par
    When $\omega_i=\omega$ for each $i\in\{1,2,\ldots,m\}$, where $\omega$ is an open subset of $\Omega$, the $L^2$-type multi-time-point interpolation observability inequality (i.e., the observation term is measured in the $L^2$-norm) for weakly coupled parabolic systems was  established in \cite[Proposition 1.3 and Lemma 5.1]{Wang-Yan-Yu}, under the assumptions that the pair formed by the coupling and observation matrices satisfies the Kalman rank condition and that sufficiently many observation time points are available. Therefore, it remains of interest to investigate whether such an inequality holds for strongly coupled systems.
 \item [$(P2)$] Do interpolation observability inequalities, either pointwise-in-time or of integral-type, remain valid for the following observation operators?
  \begin{equation}\label{yu-9-2-2}
    \widehat{B}:=(\mu_1,\mu_2),\;\mbox{with}\;\mu_1,\mu_2\in\mathbb{R}\;\mbox{and}\;|\mu_1|+|\mu_2|\neq 0;
      \;\;\mbox{and}\;\;\widetilde{B}:=I:=\mbox{diag}\{1,1\},
\end{equation}
  where $\widehat{B}$ corresponds to observation along the direction $(\mu_1,\mu_2)$ in $\mathbb{R}^2$,
   while $\widetilde{B}$ corresponds to observation along directions $(1,0)$ and $(0,1)$ in $\mathbb{R}^2$.
         These cases are closely related
     to the  null controllability of the controlled parabolic equation with complex coefficients from  measurable set:
\begin{equation}\label{yu-4-3-1}
\begin{cases}
    w_t=(a+\mathrm{i}b)\Delta_{\mathbb{C}} w+\chi_{\mathcal{D}}u&\mbox{in}\;\;\Omega\times(0,T),\\
    w=0&\mbox{on}\;\;\partial\Omega\times (0,T),\\
    w(0)=w_0&\mbox{in}\;\;\Omega,
\end{cases}
\end{equation}
     where $u\in L^\infty(0,T;L^2(\Omega;\mathbb{C}))$, $w_0\in L^2(\Omega;\mathbb{C})$, $\mathcal{D}$ is
     a measurable subset of $\Omega\times(0,T)$ with $|\mathcal{D}|>0$ and $\chi_{\mathcal{D}}$ is the characteristic function of $\mathcal{D}$.
      Indeed, the null controllability of system \eqref{yu-4-3-1} is equivalent to the observability (with observation set $\Omega$) of system
      \eqref{yu-8-20-1} (see, for instance,  \cite[Lemma 5.1]{Wang-Wang-Zhang} for the abstract setting).
    The choice  $B=(1,0)$ corresponds to the null controllability of \eqref{yu-4-3-1} by a real-valued control acting on its real part.  The choice $\widehat{B}$ with $\mu_2\neq 0$ represents a  real-valued control acting through a genuinely complex direction. The case $\widetilde{B}$ corresponds to a fully complex-valued control acting on both components.

\end{enumerate}
\end{remark}

The following propositions address the issues posed in Remark \ref{yu-remark-9-2-1}. The first proposition shows that the single-time-point interpolation observability inequality (i.e., inequality \eqref{yu-9-2-1} with $m=1$) does not hold.

\begin{proposition}\label{yu-proposition-9-2-1}
      For each $S\in(0,T)$, there exists
    $z\in L^2(\Omega;\mathbb{R}^2)\setminus\{0\}$ such that
\begin{equation}\label{yu-9-2-3}
    Be^{\mathcal{A}S}z=0.
\end{equation}
\end{proposition}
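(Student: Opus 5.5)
The plan is to construct $z$ explicitly as a single-mode initial datum, using the formula \eqref{yu-9-2-12} for $e^{\mathcal{A}t}$. Fix $S\in(0,T)$ and take $z=v\,e_1$, where $v=(v_1,v_2)^\top\in\mathbb{R}^2\setminus\{0\}$ is still to be chosen. Any fixed index $j$ would work equally well. By \eqref{yu-9-2-12} and $B=(1,0)$ we get
\begin{equation*}
Be^{\mathcal{A}S}z=e^{-a\lambda_1 S}\big(v_1\cos(\lambda_1 bS)+v_2\sin(\lambda_1 bS)\big)e_1 .
\end{equation*}
It therefore suffices to pick a nonzero $v$ orthogonal to the vector $(\cos(\lambda_1 bS),\sin(\lambda_1 bS))^\top$. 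We take
\begin{equation*}
v:=\big(-\sin(\lambda_1 bS),\cos(\lambda_1 bS)\big)^\top .
\end{equation*}
This vector has Euclidean norm $1$, so $z=v e_1\neq 0$ in $L^2(\Omega;\mathbb{R}^2)$, since $\|e_1\|_{L^2(\Omega;\mathbb{R})}=1$.

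Substituting this $v$, the bracket becomes
\begin{equation*}
-\sin(\lambda_1 bS)\cos(\lambda_1 bS)+\cos(\lambda_1 bS)\sin(\lambda_1 bS)=0 .
\end{equation*}
This vanishes identically on $\Omega$, which gives \eqref{yu-9-2-3}. The same construction works for any eigenfunction $e_j$, so there are infinitely many such $z$.

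I do not expect any real obstacle. The only point to check is the form of the rotation matrix in \eqref{yu-9-2-12}, namely that the first row of $e^{-A\lambda_j S}$ is $e^{-a\lambda_j S}(\cos(\lambda_j bS),\sin(\lambda_j bS))$. The sign conventions for $b$ do not matter, because $v$ is simply chosen orthogonal to whatever that first row turns out to be.

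The construction also explains the phenomenon named in the introduction. At any single time, the rotation induced by $b\neq0$ can move the state of one mode into the kernel of $B$. This is exactly why the paper resorts to the time-integrated observation of Theorem~\ref{yu-theorem-8-18-1}.
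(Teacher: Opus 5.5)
Your proposal is correct and is essentially the paper's proof: the paper also takes a single mode $z=(v_1,v_2)^\top e_j$ with $(v_1,v_2)=(-\sin(\lambda_j bS),\cos(\lambda_j bS))$ and reads off $Be^{\mathcal{A}S}z=0$ from \eqref{yu-9-2-12}. The only difference is that you fix $j=1$ while the paper keeps $j$ arbitrary, which you note yourself.
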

\begin{proof}
    We arbitrarily fix  $S\in (0,T)$ and $j\in \mathbb{N}^+$.
  We define $(v_1,v_2)^\top \in \mathbb{R}^2$ by
\begin{equation*}\label{yu-4-8-4}
    (v_1,v_2):=
    (-\sin (\lambda_jbS),\cos(\lambda_jbS)).
\end{equation*}
  Then   $(v_1)^2+(v_2)^2=1$, and
     \begin{equation}\label{yu-1-29-3}
   v_1\cos(\lambda_jbS)+v_2\sin(\lambda_jbS)=0.
\end{equation}
    Now, set $z:=(v_1,v_2)^\top e_j\in L^2(\Omega;\mathbb{R}^2)$. By \eqref{yu-9-2-12},  we have
\begin{eqnarray*}\label{yu-9-2-13}
    Be^{\mathcal{A}S}z= e^{-a\lambda_j S}(v_1\cos(\lambda_j bS)+v_2\sin(\lambda_jbS))e_j.
\end{eqnarray*}
   Combining this with \eqref{yu-1-29-3}, we obtain   \eqref{yu-9-2-3}.
    This completes the proof.
\end{proof}
The second proposition indicates that, regardless of the number of observation points, the multi-time-point interpolation observability inequality (i.e., inequality \eqref{yu-9-2-1} with $m>1$) may fail to hold.

\begin{proposition}\label{yu-proposition-4-8-1}
   For each $m\in \mathbb{N}^+$, there exist $\{S_i\}_{i=1}^m\subset(0,T)$  and
     $z\in  L^2(\Omega;\mathbb{R}^2)\setminus\{0\}$ such that
\begin{equation}\label{yu-4-8-1}
    Be^{\mathcal{A}S_i}z=0\;\;\mbox{for all}\;\;i\in\{1,2,\ldots,m\}.
\end{equation}
\end{proposition}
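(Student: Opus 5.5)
The idea is to build $z$ from finitely many eigenmodes. Each mode contributes a two-dimensional real coefficient vector but imposes only a single scalar constraint per observation time. In addition, the $e_j$ are orthonormal, so the constraints decouple mode by mode. By \eqref{yu-9-2-12}, for $z=\sum_{j} (v_{1,j},v_{2,j})^\top e_j$ we have
\begin{equation*}
Be^{\mathcal{A}S}z=\sum_j e^{-a\lambda_j S}\big(v_{1,j}\cos(\lambda_j bS)+v_{2,j}\sin(\lambda_j bS)\big)e_j .
\end{equation*}
Hence $Be^{\mathcal{A}S_i}z=0$ holds if and only if, for every $j$,
\begin{equation*}
v_{1,j}\cos(\lambda_j bS_i)+v_{2,j}\sin(\lambda_j bS_i)=0 .
\end{equation*}
So it suffices to exhibit a single index $j$ and a nonzero vector $(v_1,v_2)$ orthogonal to $(\cos(\lambda_jbS_i),\sin(\lambda_jbS_i))$ for all $i=1,\dots,m$ simultaneously.

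The key step is to choose the times so that all these direction vectors are parallel. For this, fix $j=1$, say, and any $S_1\in(0,T)$. Then define
\begin{equation*}
S_i:=S_1+\frac{(i-1)\pi}{\lambda_1|b|\,K},\qquad i=2,\dots,m,
\end{equation*}
where $K\in\mathbb{N}^+$ is taken large enough that $S_m<T$. With this choice $\lambda_1 b(S_i-S_1)$ is an integer multiple of $\pi/K$. That is not yet enough, so instead take the spacing to be exactly $\pi/(\lambda_1|b|)$ times an integer. Since the spacing cannot be shrunk that way, I would rather use a different mode: choose $j$ large, so that $\pi/(\lambda_j|b|)$ is small enough that $S_i:=S_1+(i-1)\pi/(\lambda_j|b|)$ all lie in $(0,T)$. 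This is possible since $\lambda_j\to+\infty$. Then $\lambda_jbS_i=\lambda_jbS_1\pm(i-1)\pi$, so each vector $(\cos(\lambda_jbS_i),\sin(\lambda_jbS_i))$ equals $\pm(\cos(\lambda_jbS_1),\sin(\lambda_jbS_1))$.

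Taking $(v_1,v_2)=(-\sin(\lambda_jbS_1),\cos(\lambda_jbS_1))$ as in the proof of Proposition \ref{yu-proposition-9-2-1} gives a unit vector annihilating all $m$ constraints. Then $z:=(v_1,v_2)^\top e_j\neq0$ satisfies \eqref{yu-4-8-1}.

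The only point needing care is arranging all $S_i$ inside $(0,T)$, which the choice of a high frequency $\lambda_j$ handles. Everything else is the same computation as in Proposition \ref{yu-proposition-9-2-1}.
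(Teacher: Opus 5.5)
Your argument is correct and is essentially the paper's proof: a single high-frequency mode $e_j$, observation times spaced so that the phases $\lambda_j bS_i$ agree modulo $\pi$ (the paper uses spacing $2\pi/(|b|\lambda_n)$, so that the vectors $(\cos(\lambda_n bS_i),\sin(\lambda_n bS_i))$ coincide exactly rather than up to sign), and the annihilating unit vector $(-\sin(\lambda_jbS_1),\cos(\lambda_jbS_1))$. In a final write-up, simply delete the abandoned attempt with $j=1$ and the parameter $K$, since it plays no role in the argument.
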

\begin{proof}
Fix $m\in \mathbb{N}^+$ arbitrarily. Since $\lambda_n\to +\infty$ as $n\to\infty$, we can choose $n\in\mathbb{N}^+$ sufficiently large such that
\begin{equation}\label{yu-4-8-2-new}
    \frac{2\pi}{|b|\lambda_n}\le \frac{T}{m+1}.
\end{equation}
Define
\begin{equation}\label{yu-4-8-3-new}
S_i:=
\begin{cases}
\dfrac{2i\pi}{b\lambda_n} & \mbox{if } b>0,\\[2mm]
T+\dfrac{2i\pi}{b\lambda_n} & \mbox{if } b<0,
\end{cases}
\qquad i=1,2,\ldots,m.
\end{equation}
By \eqref{yu-4-8-2-new}, it is straightforward to verify that $\{S_i\}_{i=1}^m \subset (0,T)$.

For each $i=1,\dots,m$, it follows from \eqref{yu-4-8-3-new} that
\begin{equation*}\label{yu-4-8-4-new}
\lambda_n b S_i=
\begin{cases}
2i\pi & \mbox{if } b>0,\\
\lambda_n b T+2i\pi & \mbox{if } b<0.
\end{cases}
\end{equation*}
In particular,
\begin{equation*}\label{yu-4-8-5-new}
\lambda_n b(S_i-S_1)\in 2\pi\mathbb{Z}
\qquad \mbox{for all } i=1,\dots,m.
\end{equation*}
Hence,
\begin{equation}\label{yu-4-8-6-new}
\cos(\lambda_n bS_i)=\cos(\lambda_n bS_1),
\;\;
\sin(\lambda_n bS_i)=\sin(\lambda_n bS_1),
\qquad i=1,\dots,m.
\end{equation}

Now we choose $(v_1,v_2)\in\mathbb{R}^2$ such that
\begin{equation}\label{yu-4-8-7-new}
v_1^2+v_2^2=1,
\;\;
v_1\cos(\lambda_n bS_1)+v_2\sin(\lambda_n bS_1)=0.
\end{equation}
For instance, one may take
$$
(v_1,v_2)=(-\sin(\lambda_n bS_1),\,\cos(\lambda_n bS_1)).
$$
Define
$$
z^*:=(v_1,v_2)^\top e_n.
$$
Then $z^*\in L^2(\Omega;\mathbb{R}^2)\setminus\{0\}$. By \eqref{yu-9-2-12}, \eqref{yu-4-8-6-new}, and \eqref{yu-4-8-7-new}, we obtain that for every $i=1,\dots,m$,
\begin{eqnarray*}
Be^{\mathcal{A}S_i}z^*
&=&
e^{-a\lambda_n S_i}
\Bigl(v_1\cos(\lambda_n bS_i)+v_2\sin(\lambda_n bS_i)\Bigr)e_n \nonumber\\
&=&
e^{-a\lambda_n S_i}
\Bigl(v_1\cos(\lambda_n bS_1)+v_2\sin(\lambda_n bS_1)\Bigr)e_n =0.
\end{eqnarray*}
This proves \eqref{yu-4-8-1} with $z=z^*$ and $\{S_i\}_{i=1}^m$ defined by \eqref{yu-4-8-3-new}. The proof is complete.
\end{proof}
\begin{remark}
    Two remarks on Proposition \ref{yu-proposition-4-8-1} are given.
\begin{enumerate}
  \item [$(i)$] In Proposition \ref{yu-proposition-4-8-1}, we show that, regardless of how many observation time points are used, the pointwise-in-time interpolation observability inequality \eqref{yu-9-2-1}, as well as the $L^2$-type multi-time-point interpolation observability inequality, may fail to hold for system \eqref{yu-8-28-1}. This stands in sharp contrast to the case of weakly coupled parabolic systems studied in \cite{Wang-Yan-Yu}. The failure is due to the strong coupling, which can induce cancellations of high-frequency oscillations in the observed component.
  \item [$(ii)$] Proposition \ref{yu-proposition-4-8-1} only demonstrates that, for any number of observation time-points, their locations can be chosen so that inequality \eqref{yu-9-2-1} fails. When the number of observation points is fixed, whether there exist alternative locations for which inequality \eqref{yu-9-2-1} is satisfied remains an open problem.
\end{enumerate}
\end{remark}

The next proposition establishes that the integral-type interpolation observability inequality remains valid for the first observation operator in \eqref{yu-9-2-2}.

\begin{proposition}\label{yu-proposition-9-2-2}
     Let $\widehat{B}$ be given in \eqref{yu-9-2-2}. Under the notation and assumptions of Theorem \ref{yu-theorem-8-18-1}, there exists a constant $M=M(a,b,\mu_1,\mu_2,D)>0$
     such that for all $\theta\in(0,1)$ and all $0<S_1<S_2\leq T$ with  $|E\cap [S_1,S_2]|\neq 0$, the following interpolation observability  inequality holds for every $z\in L^2(\Omega;\mathbb{R}^2)$:
\begin{eqnarray}\label{yu-9-3-1}
     &\;&\|e^{\mathcal{A}S_2}z\|_{L^2(\Omega;\mathbb{R}^2)}\nonumber\\
     &\leq& \frac{M e^{M\big(\frac{S_2}{1-\theta}+\frac{1}{\theta S_1}\big)}}{|E\cap [S_1,S_2]|^3}
     \Big(\frac{1}{|E\cap [S_1,S_2]|}\int_{S_1}^{S_2}\chi_E(t)\| \chi_{\mathcal{D}_t} \widehat{B} e^{\mathcal{A}t}z\|_{L^1(\Omega;\mathbb{R})}dt\Big)^{1-\theta}\|z\|_{L^2(\Omega;\mathbb{R}^2)}^{\theta}.
\end{eqnarray}
    \end{proposition}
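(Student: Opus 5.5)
The plan is to reduce Proposition \ref{yu-proposition-9-2-2} to the argument of Theorem \ref{yu-theorem-8-18-1}. The only place where $B=(1,0)$ enters is Step~1, the low-frequency estimate \eqref{yu-8-20-11}; Steps~2--4 use only the structure $\widehat{B}e^{\mathcal{A}t}\varphi=\sum_{j\le k_\lambda}\hat v_j(t)e_j$ with real scalar coefficients. They also use the trivial bound $\|\chi_{\mathcal{D}_t}\widehat{B}e^{\mathcal{A}t}z^2\|_{L^1}\le |\Omega|^{1/2}(|\mu_1|+|\mu_2|)\|e^{\mathcal{A}t}z^2\|_{L^2}$, which only alters constants by a factor depending on $\mu_1,\mu_2$. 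So it suffices to re-establish \eqref{yu-8-20-11} with $B$ replaced by $\widehat{B}$ and with $N_1$ depending on $b,\mu_1,\mu_2$.

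For Step~1, I compute from \eqref{yu-9-2-12}:
\[
\hat v_j(t)=e^{-a\lambda_jt}\big(\mu_1(\varphi_{1,j}\cos(\lambda_jbt)+\varphi_{2,j}\sin(\lambda_jbt))+\mu_2(-\varphi_{1,j}\sin(\lambda_jbt)+\varphi_{2,j}\cos(\lambda_jbt))\big).
\]
This equals $e^{-a\lambda_jt}\big(\psi_{1,j}\cos(\lambda_jbt)+\psi_{2,j}\sin(\lambda_jbt)\big)$ with
\[
\psi_{1,j}=\mu_1\varphi_{1,j}+\mu_2\varphi_{2,j},\qquad \psi_{2,j}=\mu_1\varphi_{2,j}-\mu_2\varphi_{1,j}.
\]
Equivalently, $(\psi_{1,j},\psi_{2,j})^\top=R\,(\varphi_{1,j},\varphi_{2,j})^\top$, where $R=\begin{pmatrix}\mu_1&\mu_2\\-\mu_2&\mu_1\end{pmatrix}$ is $(\mu_1^2+\mu_2^2)^{1/2}$ times a rotation. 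Hence
\[
\psi_{1,j}^2+\psi_{2,j}^2=(\mu_1^2+\mu_2^2)\big(\varphi_{1,j}^2+\varphi_{2,j}^2\big),
\]
and this is nonzero as a multiplier since $|\mu_1|+|\mu_2|\ne0$. In other words, $\widehat{B}e^{\mathcal{A}t}\varphi=Be^{\mathcal{A}t}\tilde\varphi$ with $\tilde\varphi:=R\varphi$ applied coefficientwise. Equivalently, $\widehat{B}=(\mu_1,\mu_2)$ and $\widehat B e^{\mathcal A t}=B R' e^{\mathcal A t}=Be^{\mathcal A t}R'$ for a matrix $R'$ commuting with $A$, since every matrix of the form $\alpha I+\beta J$ commutes with $A$.

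The key point is that $\|e^{\mathcal{A}S}\tilde\varphi\|=(\mu_1^2+\mu_2^2)^{1/2}\|e^{\mathcal{A}S}\varphi\|$, because $e^{\mathcal{A}S}$ acts on each mode by a scaled rotation and commutes with $R$. Thus the whole Step~1 computation, i.e.\ the lower bound \eqref{yu-8-11-1} via $\delta_j$ and the Remez-based estimate \eqref{yu-8-11-4}, applies verbatim to $\tilde\varphi$. Then \eqref{yu-8-20-11} holds for $\widehat B$ with $N_1$ replaced by $N_1(\mu_1^2+\mu_2^2)^{-1/2}$. Steps~2 through 4 then go through unchanged: assumption $(c)$ applied to the coefficients $\hat v_j(t)$, the high/low frequency splitting, and Lemma~\ref{yu-lemma-9-4-3}. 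This yields \eqref{yu-9-3-1} with $M$ depending also on $\mu_1,\mu_2$.

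The expected main obstacle is merely verifying that $R$ commutes with each modal propagator $e^{-A\lambda_jt}$. This holds because both matrices lie in the commutative algebra $\{\alpha I+\beta J\}\cong\mathbb{C}$. Once that is checked, everything else is bookkeeping of constants.

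The reduction can even be done globally rather than only in Step~1. Define $\mathcal R z:=Rz$ pointwise, a bounded invertible operator on $L^2(\Omega;\mathbb{R}^2)$ that commutes with $e^{\mathcal{A}t}$ and satisfies $\|\mathcal R z\|=(\mu_1^2+\mu_2^2)^{1/2}\|z\|$. Apply Theorem~\ref{yu-theorem-8-18-1} directly to $\mathcal R z$ and use $Be^{\mathcal{A}t}\mathcal R z=\widehat Be^{\mathcal{A}t}z$. This gives \eqref{yu-9-3-1} immediately, with the constant multiplied by a power of $(\mu_1^2+\mu_2^2)^{\pm1/2}$ absorbed into $M$.
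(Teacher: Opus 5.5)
Your proposal is correct, and its final paragraph is exactly the paper's proof: the paper sets $\varphi:=\sum_j(\mu_1z_{1,j}+\mu_2z_{2,j},\,\mu_1z_{2,j}-\mu_2z_{1,j})^\top e_j$ (your $\mathcal R z$), checks $\widehat Be^{\mathcal A t}z=Be^{\mathcal A t}\varphi$ and $\|e^{\mathcal A t}\varphi\|_{L^2(\Omega;\mathbb{R}^2)}=(\mu_1^2+\mu_2^2)^{1/2}\|e^{\mathcal A t}z\|_{L^2(\Omega;\mathbb{R}^2)}$, and applies Theorem~\ref{yu-theorem-8-18-1} to $\varphi$. The leftover factor $(\mu_1^2+\mu_2^2)^{(\theta-1)/2}\le\big(1+(\mu_1^2+\mu_2^2)^{-1}\big)^{1/2}$ is bounded uniformly in $\theta$, so it is absorbed into $M$; redoing Step~1 with $\widehat B$, as in your main plan, is also valid but unnecessary once this global reduction is available.
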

\begin{proof}
   We arbitrarily fix $\theta\in(0,1)$,  $0<S_1<S_2\leq T$ with  $|E\cap [S_1,S_2]|\neq 0$,
   and
    $z\in L^2(\Omega;\mathbb{R}^2)$. Set
$z=\sum_{j=1}^{+\infty}(z_{1,j},z_{2,j})^\top e_j$,
    where $\{(z_{1,j},z_{2,j})^\top\}_{j=1}^{+\infty}\subset\mathbb{R}^2$.
   From \eqref{yu-9-2-12}, we have
   \begin{eqnarray}\label{yu-9-3-2}
    \widehat{B}e^{\mathcal{A}t}z=
    \sum_{j=1}^{+\infty}e^{-a\lambda_jt}
    \left[(\mu_1z_{1,j}+\mu_2z_{2,j})\cos(\lambda_jbt)+(\mu_1z_{2,j}-\mu_2z_{1,j})\sin(\lambda_jbt)\right]e_j
    \;\;\mbox{for all}\;\;t\in(0,T).
    \end{eqnarray}
    Set
$$
    \varphi:=\sum_{j=1}^{+\infty}(\mu_1z_{1,j}+\mu_2z_{2,j}, \mu_1z_{2,j}-\mu_2z_{1,j})^\top e_j.
$$
 A straightforward computation shows that
\begin{equation}\label{yu-9-3-4}
    \|\varphi\|^2_{L^2(\Omega;\mathbb{R}^2)}=\big((\mu_1)^2+(\mu_2)^2\big)\|z\|^2_{L^2(\Omega;\mathbb{R}^2)}.
\end{equation}
Two facts hold: First, combining \eqref{yu-9-2-12} and \eqref{yu-9-3-2}, we obtain
\begin{equation*}\label{yu-1-29-4}
    \widehat{B}e^{\mathcal{A}t}z=Be^{\mathcal{A}t}\varphi\;\;\mbox{for all}\;\;t\in[0,T].
\end{equation*}
    Second, by \eqref{yu-9-2-12}, we have that for every $t\in(0,T)$,
\begin{eqnarray*}\label{yu-9-3-5}
    \|e^{\mathcal{A}t}\varphi\|^2_{L^2(\Omega;\mathbb{R}^2)}&=&
    \sum_{j=1}^{+\infty}e^{-2\lambda_jat}\left|\left(
                                                 \begin{array}{cc}
                                                   \cos(\lambda_jbt) & \sin(\lambda_jbt) \\
                                                   -\sin(\lambda_jbt) & \cos(\lambda_jbt) \\
                                                 \end{array}
                                               \right)\left(
                                                        \begin{array}{c}
                                                          \mu_1z_{1,j}+\mu_2z_{2,j} \\
                                                           \mu_1z_{2,j}-\mu_2z_{1,j} \\
                                                        \end{array}
                                                      \right)\right|^2\nonumber\\
    &=&\big((\mu_1)^2+(\mu_2)^2\big)\sum_{j=1}^{+\infty}
    e^{-2\lambda_jat}\big((z_{1,j})^2+(z_{2,j})^2\big)=\big((\mu_1)^2+(\mu_2)^2\big)\|e^{\mathcal{A}t}z\|^2_{L^2(\Omega;\mathbb{R}^2)}.
\end{eqnarray*}
Combining these two facts with \eqref{yu-9-3-2} and \eqref{yu-9-3-4}, and applying Theorem \ref{yu-theorem-8-18-1} to $\varphi$, we can find  a constant $C_5=C_5(a,b,D)>0$ such that
\begin{eqnarray*}\label{yu-9-3-6}
&\;&\|e^{\mathcal{A}S_2}z\|_{L^2(\Omega;\mathbb{R}^2)}\nonumber\\
     &\leq&\frac{C_5 C_6e^{C_5\big(\frac{S_2}{1-\theta}+\frac{1}{\theta S_1}\big)}}{|E\cap [S_1,S_2]|^3}
     \Big(\frac{1}{|E\cap [S_1,S_2]|}\int_{S_1}^{S_2}\chi_E(t)\| \chi_{\mathcal{D}_t} \widehat{B} e^{\mathcal{A}t}z\|_{L^1(\Omega;\mathbb{R})}dt\Big)^{1-\theta}\|z\|_{L^2(\Omega;\mathbb{R}^2)}^{\theta},
\end{eqnarray*}
where $C_6:=\Big(1+\big((\mu_1)^2+(\mu_2)^2\big)^{-1}\Big)^{\frac{1}{2}}$, which yields   \eqref{yu-9-3-1} with $M:=C_5C_6$. This completes the proof.
\end{proof}

    The following result  establishes  the single-time-point interpolation observability inequality
    \eqref{yu-9-2-1} with $B$  replaced by  $\widetilde{B}$.

\begin{proposition}\label{yu-proposition-9-3-1}
Under the notation and assumptions of Theorem \ref{yu-theorem-8-18-1}, there exists a constant
$M=M(a,b,D)>0$ such that for each $\theta\in(0,1)$ and for a.e. $t\in E\setminus\{0\}$, the following inequality holds for any $z\in L^2(\Omega;\mathbb{R}^2)$:
\begin{equation}\label{yu-9-4-20}
\|e^{\mathcal{A} t}z\|_{L^2(\Omega;\mathbb{R}^2)}
\leq
M e^{M\big(\frac{t}{1-\theta}+\frac{1}{\theta t}\big)}
\big( \|\chi_{\mathcal{D}_t}e^{\mathcal{A} t}z\|_{L^1(\Omega;\mathbb{R}^2)}\big)^{1-\theta}
\|z\|^{\theta}_{L^2(\Omega;\mathbb{R}^2)}.
\end{equation}
\end{proposition}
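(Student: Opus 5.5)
The plan is to follow the scalar argument of \cite{PW, Luis-Wang-Zhang}: split into low and high frequencies, use assumption $(c)$ of Theorem \ref{yu-theorem-8-18-1} componentwise at the fixed time $t$, and then interpolate with Lemma \ref{yu-lemma-9-4-3}. With $\widetilde{B}=I$ both components are observed, so the rotation in \eqref{yu-9-2-12} is harmless and no Remez argument is needed. Fix $\theta\in(0,1)$ and a time $t\in E\setminus\{0\}$ at which $(c)$ holds; such $t$ form a full-measure subset of $E$. Fix also $z\in L^2(\Omega;\mathbb{R}^2)$ and $\lambda>\lambda_1$, and split $z=z^1+z^2$ as in \eqref{yu-8-14-3}.

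\emph{Low frequencies.} Write $e^{\mathcal{A}t}z^1=\sum_{j\le k_\lambda}w_je_j$ with $w_j=(w_{1,j},w_{2,j})^\top\in\mathbb{R}^2$. Apply $(c)$ separately to the real coefficient families $\{w_{1,j}\}$ and $\{w_{2,j}\}$ and add the two results. This gives
\[
\|e^{\mathcal{A}t}z^1\|_{L^2(\Omega;\mathbb{R}^2)}\le \sqrt{2D}\,e^{D\sqrt{\lambda}/2}\,\|\chi_{\mathcal{D}_t}e^{\mathcal{A}t}z^1\|_{L^1(\Omega;\mathbb{R}^2)},
\]
where the $L^1$-norm of an $\mathbb{R}^2$-valued function is taken as the sum of the component norms. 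The rotation matrix in \eqref{yu-9-2-12} preserves norms, so Fourier coefficients and norms behave consistently.

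\emph{High frequencies.} From \eqref{yu-9-2-12}, $\|e^{\mathcal{A}t}z^2\|_{L^2}\le e^{-a\lambda t}\|z\|_{L^2}$. Next,
\[
\|\chi_{\mathcal{D}_t}e^{\mathcal{A}t}z^1\|_{L^1}\le \|\chi_{\mathcal{D}_t}e^{\mathcal{A}t}z\|_{L^1}+\sqrt{2}\,|\Omega|^{1/2}e^{-a\lambda t}\|z\|_{L^2}.
\]
Combining the two estimates yields
\[
\|e^{\mathcal{A}t}z\|_{L^2}\le Ce^{C\sqrt\lambda}\|\chi_{\mathcal{D}_t}e^{\mathcal{A}t}z\|_{L^1}+Ce^{C\sqrt\lambda-a\lambda t}\|z\|_{L^2},
\]
with $C=C(D,\Omega)$.

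\emph{Interpolation.} Set $\varepsilon(\lambda):=e^{-\frac12 a(\lambda-\lambda_1)t}$, which ranges over $(0,1)$ as $\lambda$ ranges over $(\lambda_1,\infty)$. The elementary bounds used in Step 3 of the proof of Theorem \ref{yu-theorem-8-18-1} (with $S_1$ replaced by $t$) are
\[
C\sqrt\lambda-\tfrac12 a\lambda t\le \frac{C^2}{a\theta t}\quad\text{and}\quad C\sqrt\lambda\le \tfrac12 a\gamma\lambda t+\frac{C^2}{a\theta t},\qquad \gamma=\theta(1-\theta)^{-1}.
\]
With these, the last inequality becomes statement $(i)$ of Lemma \ref{yu-lemma-9-4-3}. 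The constant there is $\Pi_1=C'e^{\frac{a\lambda_1t}{2(1-\theta)}+\frac{C^2}{a\theta t}}$, and the functionals are
\[
F_1=\|e^{\mathcal{A}t}\cdot\|_{L^2},\qquad F_2=\|\chi_{\mathcal{D}_t}e^{\mathcal{A}t}\cdot\|_{L^1},\qquad F_3=\|\cdot\|_{L^2}.
\]
The hypothesis $F_1\le F_3$ holds by contractivity of the semigroup, and $F_2$ is continuous because $\chi_{\mathcal{D}_t}$ is a bounded multiplier for the fixed $t$. Lemma \ref{yu-lemma-9-4-3} then gives \eqref{yu-9-4-20} with $2\Pi_1\le Me^{M(\frac{t}{1-\theta}+\frac1{\theta t})}$.

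The only delicate step is bookkeeping. The low-frequency estimate must use $(c)$ for the real coefficient sequences of each component at the single time $t$, and the resulting constant must be independent of $t$, depending only on $D$ (and on $a,b$ through the interpolation step). Everything else repeats Steps 3 and 4 of the proof of Theorem \ref{yu-theorem-8-18-1}.
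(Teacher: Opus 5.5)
Your proposal is correct and follows essentially the same route as the paper. Both arguments apply assumption $(c)$ componentwise to the low-frequency part $e^{\mathcal{A}t}z^1$ at the fixed time $t$, use the decay $e^{-a\lambda t}$ of the high-frequency part, and interpolate via Lemma~\ref{yu-lemma-9-4-3} with $\varepsilon(\lambda)=e^{-\frac12 a(\lambda-\lambda_1)t}$. The remaining differences are cosmetic and are absorbed into $M$: your constant $\sqrt{2D}\,e^{D\sqrt{\lambda}/2}$ versus the paper's $2De^{D\sqrt{\lambda}}$, and your sum-of-components convention for the $L^1(\Omega;\mathbb{R}^2)$ norm.
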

\begin{proof}
    Let $z=\sum_{j=1}^{+\infty}(z_j^1,z_j^2)^\top e_j\in L^2(\Omega;\mathbb{R}^2)$ be fixed arbitrarily. For each $\lambda> \lambda_1$, we let $\varphi_1:=(\varphi_1^1,\varphi_1^2)=\sum_{j=1}^{k_\lambda}(z_j^1,z_j^2)^\top e_j$
    and $\varphi_2:=(\varphi_2^1,\varphi_2^2)=\sum_{j=k_\lambda+1}^{+\infty}(z_j^1,z_j^2)^\top e_j$.

    We first show that for a.e. $t\in E$,
    \begin{eqnarray}\label{yu-9-4-22}
    \|e^{\mathcal{A}t}\varphi_1\|_{L^2(\Omega;\mathbb{R}^2)}
    \leq 2De^{D\sqrt{\lambda}}\|\chi_{\mathcal{D}_t}e^{\mathcal{A} t}\varphi_1\|_{L^1(\Omega;\mathbb{R}^2)}.
\end{eqnarray}
Let $w(t):=e^{\mathcal{A}t}\varphi_1$. By \eqref{yu-9-2-12}, we have
$e^{\mathcal{A}t}\varphi_1=(w^1(t),w^2(t))^\top$,
where
\begin{equation}\nonumber
\begin{cases}
    w^1(t)=\sum_{j=1}^{k_\lambda}e^{-a\lambda_jt}
\big(\cos(\lambda_jbt)z_j^1+\sin(\lambda_jbt)z_j^2\big)e_j,\\
w^2(t)=\sum_{j=1}^{k_\lambda}e^{-a\lambda_jt}
\big(-\sin(\lambda_jbt)z_j^1+\cos(\lambda_jbt)z_j^2\big)e_j.
\end{cases}
\end{equation}
Since $w^1(t), w^2(t) \in \mbox{span}\{e_1,\ldots,e_{k_\lambda}\}$ for each $t\in\mathbb{R}^+$, Assumption $(c)$ in Theorem \ref{yu-theorem-8-18-1} yields that for a.e. $t\in E$,
\begin{equation*}
    \|w^i(t)\|_{L^2(\Omega;\mathbb{R})}\leq
    De^{D\sqrt{\lambda}}\|\chi_{\mathcal{D}_t}w^i(t)\|_{L^1(\Omega;\mathbb{R})},\quad\quad i=1,2.
\end{equation*}
Hence
\begin{eqnarray*}
    \|e^{\mathcal{A}t}\varphi_1\|_{L^2(\Omega;\mathbb{R}^2)}
    &=& \big(\|w^1(t)\|^2_{L^2(\Omega;\mathbb{R})}
    +\|w^2(t)\|^2_{L^2(\Omega;\mathbb{R})}\big)^\frac{1}{2}\nonumber\\
    &\leq&\|w^1(t)\|_{L^2(\Omega;\mathbb{R})}+\|w^2(t)\|_{L^2(\Omega;\mathbb{R})}\nonumber\\
    &\leq&De^{D\sqrt{\lambda}}
    \left(\|\chi_{\mathcal{D}_t}w^1(t)\|_{L^1(\Omega;\mathbb{R})}
    +\|\chi_{\mathcal{D}_t}w^2(t)\|_{L^1(\Omega;\mathbb{R})}\right)\nonumber\\
    &\leq&2De^{D\sqrt{\lambda}}\|\chi_{\mathcal{D}_t}w(t)\|_{L^1(\Omega;\mathbb{R}^2)}
    =2De^{D\sqrt{\lambda}}\|\chi_{\mathcal{D}_t}e^{\mathcal{A} t}\varphi_1\|_{L^1(\Omega;\mathbb{R}^2)}.
\end{eqnarray*}
This proves \eqref{yu-9-4-22}.

    Meanwhile, it is straightforward to check that  $\|e^{\mathcal{A}t}\varphi_2\|_{L^2(\Omega;\mathbb{R}^2)}\leq e^{-a\lambda t}\|z\|_{L^2(\Omega;\mathbb{R}^2)}$ for each $t\in\mathbb{R}^+$.
    This, along with  \eqref{yu-9-4-22} and  H\"{o}lder's inequality, implies for a.e. $t\in E$,
   \begin{eqnarray}\label{yu-9-4-6}
    \|e^{\mathcal{A} t}z\|_{L^2(\Omega;\mathbb{R}^2)}
    &\leq& \|e^{\mathcal{A} t}\varphi_1\|_{L^2(\Omega;\mathbb{R}^2)}+\|e^{\mathcal{A} t}\varphi_2\|_{L^2(\Omega;\mathbb{R}^2)}\nonumber\\
    &\leq& 2De^{D\sqrt{\lambda}}\|\chi_{\mathcal{D}_t}e^{\mathcal{A} t}z\|_{L^1(\Omega;\mathbb{R}^2)}
    +(2D|\Omega|^{\frac{1}{2}}+1)e^{D\sqrt{\lambda}-a\lambda t}\|z\|_{L^2(\Omega;\mathbb{R}^2)}.
\end{eqnarray}

   Now,  fix  an arbitrary    $\theta\in (0,1)$ and  set $\gamma:=\theta(1-\theta)^{-1}$.
   One can easily verify  that for each $t>0$,
\begin{equation*}\label{yu-9-4-7}
    D\sqrt{\lambda}-\frac{1}{2}a\lambda t\leq \frac{D^2}{a\theta t},\;\;
    D\sqrt{\lambda}
    \leq \frac{1}{2}\gamma a\lambda t+\frac{D^2}{a\theta t}\;\;\mbox{and}\;\;e^{\frac{1}{2}\gamma \lambda_1t}\leq
    e^{\frac{\lambda_1t}{1-\theta}}.
\end{equation*}
   Combining these with  \eqref{yu-9-4-6} implies that for a.e. $t\in E\setminus\{0\}$ and all $\lambda> \lambda_1$,
\begin{eqnarray}\label{yu-9-4-8}
    &\;&\|e^{\mathcal{A}t}z\|_{L^2(\Omega;\mathbb{R}^2)}\nonumber\\
    &\leq& (2D+1)(|\Omega|^{\frac{1}{2}}+1)
    e^{(a^{-1}D^2+a\lambda_1)\big(\frac{t}{1-\theta}+\frac{1}{\theta t}\big)}
    \Big(\varepsilon(\lambda)^{-\gamma}\|\chi_{\mathcal{D}_t}e^{\mathcal{A} t}z\|_{L^1(\Omega;\mathbb{R}^2)}
    +\varepsilon(\lambda)\|z\|_{L^2(\Omega;\mathbb{R}^2)}\Big),
\end{eqnarray}
    where $\varepsilon(\lambda):=e^{-\frac{1}{2}a(\lambda-\lambda_1)t}$.
 \par
    We now use Lemma \ref{yu-lemma-9-4-3} to show \eqref{yu-9-4-20}, where $$H:=L^2(\Omega;\mathbb{R}^2),
  \;\;F_1(\cdot):=\|e^{\mathcal{A} t}\cdot\|_{L^2(\Omega;\mathbb{R}^2)},$$
    $$F_2(\cdot):=\|\chi_{D_t}e^{\mathcal{A} t}\cdot\|_{L^1(\Omega;\mathbb{R}^2)}\;\;\mbox{and}\;\; F_3(\cdot):=\|\cdot\|_{L^2(\Omega;\mathbb{R}^2)}.$$
    We note that the function $\lambda\mapsto \varepsilon(\lambda)$
  for $\lambda> \lambda_1$ is surjective onto $(0,1)$, and $\|e^{\mathcal{A} t}z\|_{L^2(\Omega;\mathbb{R}^2)}\leq \|z\|_{L^2(\Omega;\mathbb{R}^2)}$ for each $z\in L^2(\Omega;\mathbb{R}^2)$ and $t\geq 0$.
     These, along with \eqref{yu-9-4-8} and the arbitrariness of $z$ in it, yields that the statement $(i)$ in Lemma \ref{yu-lemma-9-4-3} holds
     with
     $\Pi_1:=(2D+1)(|\Omega|^{\frac{1}{2}}+1)
    e^{(a^{-1}D^2+a\lambda_1)\big(\frac{t}{1-\theta}+\frac{1}{\theta t}\big)}$.
  Thus, by Lemma~\ref{yu-lemma-9-4-3}, we get that for any $z\in L^2(\Omega;\mathbb{R}^2)$,
  $$\|e^{\mathcal{A} t}z\|_{L^2(\Omega;\mathbb{R}^2)}
\leq
2(2D+1)(|\Omega|^{\frac{1}{2}}+1)
    e^{(a^{-1}D^2+a\lambda_1)\big(\frac{t}{1-\theta}+\frac{1}{\theta t}\big)}
\big( \|\chi_{\mathcal{D}_t}e^{\mathcal{A} t}z\|_{L^1(\Omega;\mathbb{R}^2)}\big)^{1-\theta}
\|z\|^{\theta}_{L^2(\Omega;\mathbb{R}^2)}.$$
  This gives
  \eqref{yu-9-4-20} with $M:=2\max\{(2D+1)(|\Omega|^{\frac{1}{2}}+1), a^{-1}D^2+a\lambda_1\}$.
We complete the proof.
\end{proof}

\section{Proof of Theorem \ref{yu-theorem-8-18-2}}\label{yu-sec-4}

\begin{proof}[Proof of Theorem \ref{yu-theorem-8-18-2}]
The proof of Theorem \ref{yu-theorem-8-18-2} relies on the integral-type interpolation observability inequality \eqref{yu-8-18-1} and a suitable application of the strategy developed in \cite{PW, Luis-Wang-Zhang} for deriving observability from measurable sets.

       By Lemma \ref{yu-lemma-9-3-1}, there exists
       $(x_0,t_0)\in \mathcal{D}$ and
        $R>0$ such that
\begin{equation}\label{yu-8-21-2}
    B_{4R}^{d+1}(x_0,t_0)\subset \Omega\times(0,T),\;\;\mbox{and}\;\;|B_R^{d+1}(x_0,t_0)\cap \mathcal{D}|\geq \frac{1}{2}|B_R^{d+1}|>0.
\end{equation}
   Let  $\mathcal{D}^R:=B_R^{d+1}(x_0,t_0)\cap \mathcal{D}$.
  Then it follows from    \eqref{yu-8-21-2} that
\begin{equation}\label{yu-8-21-3}
    B^d_{4R}(x_0)\subset \Omega, \;\;\mathcal{D}^R\subset B_R^d(x_0)\times(0,T)\;\;\mbox{and}\;\;
    |\mathcal{D}^R|>0.
\end{equation}
   For each $t\in (0,T)$, we denote
   \begin{equation}\label{yu-8-21-4}
    {\mathcal{D}^R}_t:=\{x\in\Omega:(x,t)\in \mathcal{D}^R\}\;\;\mbox{and}\;\;
    E^R:=\Big\{t\in(0,T):|{\mathcal{D}^R}_t|\geq \frac{|\mathcal{D}^R|}{2T}\Big\}.
\end{equation}
By  \eqref{yu-8-21-3} and \eqref{yu-8-21-4}, we may apply  Lemma \ref{yu-lemma-8-19-1} (with $\mathcal{D}$ replaced by $\mathcal{D}^R$) to obtain the following conclusions:
\begin{enumerate}
  \item [$(q_1)$] ${\mathcal{D}^R}_t$ is a measurable subset of $\Omega$ for a.e. $t\in(0,T)$;
  \item[$(q_2)$] for any $f\in L^1(\Omega\times(0,T))$,
      the function $t\mapsto \int_{\Omega}\chi_{{\mathcal{D}^R}_t}(x)|f(x,t)|dx$ is integrable over $(0,T)$;
  \item [$(q_3)$] $E^R$ is a measurable subset of $(0,T)$ and $|E^R|\geq \frac{|\mathcal{D}^R|}{2|B^d_R|}$;
  \item [$(q_4)$] $\chi_{E^R}\chi_{{\mathcal{D}^R}_t}\leq \chi_{\mathcal{D}^R}$ in $\Omega\times(0,T)$.
  \end{enumerate}
  Meanwhile, by \eqref{yu-8-21-3} and  \eqref{yu-8-21-4}, we may apply  Corollary \ref{yu-corollary-8-19-1}
  (with $\mathcal{D}$ replaced by $\mathcal{D}^R$) to obtain
    a constant $D(R,|\mathcal{D}^R|/T)>0$ such that for all $\lambda> \lambda_1$ and $t\in E^R$,
\begin{equation}\label{yu-8-21-5}
\sum_{i=1}^{k_\lambda}|a_i|^2 \leq D(R,|\mathcal{D}^R|/T) e^{D(R,|\mathcal{D}^R|/T) \sqrt{\lambda}} \Big(\Big\|\chi_{{\mathcal{D}^R}_t}\sum_{i=1}^{k_\lambda} a_i e_i\Big\|_{L^1(\Omega;\mathbb{R})}\Big)^2\;\;\mbox{for all}\;\;
\{a_i\}_{i=1}^{k_\lambda}\subset\mathbb{R}.
\end{equation}

The remainder of the proof is divided into two steps:

\vskip 5pt
  \noindent  \emph{Step 1. We  show that there exists a constant $M=M(a,b,T,\mathcal{D})>0$ such that
\begin{equation}\label{yu-8-21-7}
    \|e^{\mathcal{A}T}z\|_{L^2(\Omega;\mathbb{R}^2)}
    \leq M\int_0^T\chi_{E^R}(t)\|\chi_{{\mathcal{D}^R}_t} Be^{\mathcal{A}t}z\|_{L^1(\Omega;\mathbb{R})}dt
    \;\;\mbox{for all}\;\;z\in L^2(\Omega;\mathbb{R}^2).
\end{equation}}
   Here, we note that, by $(q_1)$ and $(q_2)$, the integral on the right-hand side of inequality \eqref{yu-8-21-7} is well defined. Fix an arbitrary \(\beta>0\), and set
  \begin{equation}\label{wang-4.6-2-6}
    \mu:=\sqrt{\frac{\beta+2}{\beta+1}}.
    \end{equation}
    Let  $\ell$ be a density point of $E^R$. By Lemma \ref{a2} (with $E$ replaced by $E^R$),
    there exists a strictly monotonically decreasing
    sequence $\{\ell_m\}_{m=1}^{+\infty}$, given by
        \eqref{b1} with $\mu$ defined in \eqref{wang-4.6-2-6}, satisfying
        \begin{equation}\label{yu-8-21-8}
    \ell_m-\ell_{m+1}\leq3|E^R\cap(\ell_{m+1},\ell_m)|.
\end{equation}
       For each $m\in\mathbb{N}^+$, we define the following two sets:
\begin{equation}\label{yu-8-18-10}
    \widehat{E}_m^R:=E^R\cap (\ell_{m+1},\ell_{m})-\ell_{m+2}\subset(0,\ell_m-\ell_{m+2}),
\end{equation}
where the inclusion follows from  the strict monotonicity of the decreasing sequence
  $\{\ell_m\}_{m=1}^{+\infty}$, and
\begin{equation}\label{yu-8-18-10-bb}
\widehat{\mathcal{D}}^R{}_{t,m}:={\mathcal{D}^R}_{t+\ell_{m+2}},\;\;t\in(0,\ell_m-\ell_{m+2}).
\end{equation}
   Here, the set on the right-hand side of \eqref{yu-8-18-10-bb} is defined in \eqref{yu-8-21-4}.

   We state three facts below:

    First, it  is clear that
\begin{equation}\label{yu-8-18-11}
    \ell < \ell_m < T\;\;\mbox{and}\;\;|\widehat{E}^R_m|=|E^R\cap (\ell_{m+1},\ell_{m})|\;\;\mbox{for all}\;\;m\in\mathbb{N}^+.
\end{equation}

Second, there exists a constant $C_7=C_7(a,b,T,\mathcal{D})>0$ such that  for all $m\in \mathbb{N}^+$, $\theta\in (0,1)$ and $z\in L^2(\Omega;\mathbb{R}^2)$,
\begin{eqnarray}\label{yu-8-18-12}
   \|e^{\mathcal{A}\ell_m}z\|_{L^2(\Omega;\mathbb{R}^2)}
   \leq
   N_4
       \Big(\int_{\ell_{m+1}-\ell_{m+2}}^{\ell_{m}-\ell_{m+2}}
    \chi_{\widehat{E}^R_m}(t)\|
    \chi_{\widehat{\mathcal{D}}^R{}_{t,m}} B e^{\mathcal{A}(t+\ell_{m+2})}z\|_{L^1(\Omega;\mathbb{R})}dt\Big)^{1-\theta}
       \|e^{\mathcal{A}\ell_{m+2}}z\|_{L^2(\Omega;\mathbb{R}^2)}^{\theta}
\end{eqnarray}
where
\begin{eqnarray*}
N_4:=\frac{C_7 e^{C_7\big(\frac{\ell_{m}-\ell_{m+2}}{1-\theta}
+\frac{(\ell_{m+1}-\ell_{m+2})^{-1}}{\theta}\big)}}{|\widehat{E}^R_m|^{4-\theta}}.
\end{eqnarray*}
   Indeed, we fix arbitrarily $m\in \mathbb{N}^+$, $\theta\in (0,1)$ and $z\in L^2(\Omega;\mathbb{R}^2)$.
   From $(q_1)$, \eqref{yu-8-21-5}, and \eqref{yu-8-18-10-bb}, it follows  that for a.e.  $t\in (0,\ell_m-\ell_{m+2})\bigcap \widehat{E}^R_{m}$, $\widehat{\mathcal{D}}^R{}_{t,m}$ is measurable, and for all $\lambda>\lambda_1$,
\begin{equation*}\label{yu-8-21-10}
    \sum_{i=1}^{k_\lambda}|a_i|^2 \leq D(R,|\mathcal{D}^R|/T) e^{D(R,|\mathcal{D}^R|/T) \sqrt{\lambda}} \Big(\Big\|\chi_{\widehat{\mathcal{D}}^R{}_{t,m}}\sum_{i=1}^{k_\lambda} a_i e_i\Big\|_{L^1(\Omega;\mathbb{R})}\Big)^2\;\;\mbox{for all}\;\;
\{a_i\}_{i=1}^{k_\lambda}\subset\mathbb{R}.
\end{equation*}
Combining these with  $(q_2)$, $(q_3)$ and \eqref{yu-8-18-10}, we may apply Theorem  \ref{yu-theorem-8-18-1}
 with the following identifications
\begin{equation*}
T=\ell_{m}-\ell_{m+2};\;E=\widehat{E}_m^R;\;\mathcal{D}_t=
\widehat{\mathcal{D}}^R{}_{t,m};\;S_1=\ell_{m+1}-\ell_{m+2};\;
S_2=\ell_{m}-\ell_{m+2};\; D=D(R,|\mathcal{D}^R|/T)
\end{equation*}
to derive a constant
    $C_8=C_8(a,b,T,\mathcal{D})>0$ such that
\begin{eqnarray*}\label{yu-8-21-6}
\|e^{\mathcal{A}(\ell_m-\ell_{m+2})}z\|_{L^2(\Omega;\mathbb{R}^2)}
     \leq \frac{N_5}{|\widehat{E}_m^R|^3} \Big(\frac{1}{|\widehat{E}_m^R|}
     \int_{\ell_{m+1}-\ell_{m+2}}^{\ell_{m}-\ell_{m+2}}\chi_{\widehat{E}_m^R}(t)\| \chi_{\widehat{\mathcal{D}}^R{}_{t,m}} B e^{\mathcal{A}t}z\|_{L^1(\Omega;\mathbb{R})}dt\Big)^{1-\theta}
     \|z\|_{L^2(\Omega;\mathbb{R}^2)}^{\theta},
\end{eqnarray*}
where
\begin{eqnarray*}
N_5:=C_8 e^{C_8\big(\frac{\ell_m-\ell_{m+2}}{(1-\theta)}+\frac{(\ell_{m+1}-\ell_{m+2})^{-1}}{\theta} \big)}.
\end{eqnarray*}
Replacing $z$ with $e^{\mathcal{A}\ell_{m+2}}z$ in the above inequality yields \eqref{yu-8-18-12}
with $C_7:=C_8$.

Third, it
 follows  directly
  from  \eqref{b1} and \eqref{yu-8-21-8} that
\begin{equation}\label{yu-8-18-14}
\begin{cases}
(\ell_{m+1}-\ell_{m+2})^{-1}=(\mu-1)^{-1}(\ell_1-\ell)^{-1}\mu^{m+1}\leq(\mu-1)^{-1}(\ell_1-\ell)^{-1}\mu^{m+2}\\
(|E^R\cap (\ell_{m+1},\ell_{m})|)^{-1}\leq 3(\ell_1-\ell)^{-1}(\mu-1)^{-1}\mu^m
\end{cases}\;\;\mbox{for all}\;\;m\in\mathbb{N}^+.
\end{equation}

   Combining the above facts \eqref{yu-8-18-11}, \eqref{yu-8-18-12} (with $\theta=1-(1+\beta)^{-1}$ and $\beta$ as defined in \eqref{wang-4.6-2-6}), and \eqref{yu-8-18-14}, together with
    \eqref{yu-8-18-10}, \eqref{yu-8-18-10-bb}, and  Young's inequality, we obtain a constant
       $C_9=C_9(\beta,\ell,\ell_1)>0$ such that for all $\varepsilon>0$, $m\in\mathbb{N}^+$ and $z\in L^2(\Omega;\mathbb{R}^2)$,
\begin{eqnarray*}\label{yu-8-18-13}
    &\;&\|e^{\mathcal{A}\ell_m}z\|_{L^2(\Omega;\mathbb{R}^2)}\nonumber\\
    &\leq&\frac{\big(C_7e^{C_7\big((1+\beta)T+(\beta+1)\beta^{-1}(\mu-1)^{-1}(\ell_1-\ell)^{-1}\mu^{m+2}
    \big)}\big)^{\beta+1}}{\varepsilon^\beta|E^R\cap (\ell_{m+1},\ell_{m})|^{3\beta+4}}\nonumber\\
    &\;&\times\int_{\ell_{m+1}}^{\ell_{m}}\chi_{E^R\cap (\ell_{m+1},\ell_{m})}(t)\|\chi_{{\mathcal{D}^R}_t} B e^{\mathcal{A}t}z\|_{L^1(\Omega;\mathbb{R})}dt
    +\varepsilon\|e^{\mathcal{A}\ell_{m+2}}z\|_{L^2(\Omega;\mathbb{R}^2)}\nonumber\\
    &\leq&\frac{(C_7)^{\beta+1}3^{(3\beta+4)}\mu^{(3\beta+4)m}e^{C_7
    \big((\beta+1)^2T+(\beta+1)^2\beta^{-1}(\mu-1)^{-1}(\ell_1-\ell)^{-1}
    \mu^{m+2}\big)}}
    {\varepsilon^\beta[(\ell_1-\ell)(\mu-1)]^{3\beta+4}}\nonumber\\
    &\;&\times\int_{\ell_{m+1}}^{\ell_{m}} \chi_{E^R\cap (\ell_{m+1},\ell_{m})}(t)\|\chi_{{\mathcal{D}^R}_t} B e^{\mathcal{A}t}z\|_{L^1(\Omega;\mathbb{R})}dt
    +\varepsilon\|e^{\mathcal{A}\ell_{m+2}}z\|_{L^2(\Omega;\mathbb{R}^2)}\nonumber\\
    &\leq&\frac{(C_7)^{\beta+1}e^{C_7(\beta+1)^2T+C_9
    \mu^{m+2}}}
    {\varepsilon^\beta[(\ell_1-\ell)(\mu-1)]^{3\beta+4}}
    \int_{\ell_{m+1}}^{\ell_{m}} \chi_{E^R\cap (\ell_{m+1},\ell_{m})}(t)\|\chi_{{\mathcal{D}^R}_t} B e^{\mathcal{A}t}z\|_{L^1(\Omega;\mathbb{R})}dt
    +\varepsilon\|e^{\mathcal{A}\ell_{m+2}}z\|_{L^2(\Omega;\mathbb{R}^2)}.
\end{eqnarray*}
    This implies that for all $\varepsilon>0$, $m\in\mathbb{N}^+$ and $z\in L^2(\Omega;\mathbb{R}^2)$,
\begin{eqnarray}\label{yu-8-19-1}
    &\;&\varepsilon^\beta e^{-C_9\mu^{m+2}}\|e^{\mathcal{A}\ell_m}
    z\|_{L^2(\Omega;\mathbb{R}^2)}-\varepsilon^{\beta+1} e^{-C_9\mu^{m+2}}\|e^{\mathcal{A}\ell_{m+2}}
    z\|_{L^2(\Omega;\mathbb{R}^2)}\nonumber\\
    &\leq&\frac{(C_7)^{\beta+1}e^{C_7(\beta+1)^2T}}{[(\ell_1-\ell)(\mu-1)]^{3\beta+4}}
    \int_{\ell_{m+1}}^{\ell_{m}} \chi_{E^R\cap (\ell_{m+1},\ell_{m})}(t)\|\chi_{{\mathcal{D}^R}_t} B e^{\mathcal{A}t}z\|_{L^1(\Omega;\mathbb{R})}dt.
\end{eqnarray}
   Meanwhile, for  $m\in \mathbb{N}^+$, we  set
   \begin{equation}\label{yu-8-19-2}
     \varepsilon_m:=e^{-C_9\mu^{m+2}}.
\end{equation}
   A straightforward calculation yields
\begin{equation}\label{yu-8-19-3}
(\varepsilon_m)^\beta e^{-C_9\mu^{m+2}}
=e^{-C_9(\beta+2)\mu^{m}} \quad \text{and} \quad
(\varepsilon_m)^{\beta+1} e^{-C_9\mu^{m+2}}=e^{-C_9(\beta+2)\mu^{m+2}};
\end{equation}
\begin{equation}\label{yu-8-19-4}
e^{-C_9(\beta+2)\mu^{m}}\to0 \quad \text{as } m\to+\infty.
\end{equation}

We now fix arbitrarily   $m\in\mathbb{N}^+$ and $z\in L^2(\Omega;\mathbb{R}^2)$. By   \eqref{yu-8-19-1} \big(with $\varepsilon$
replaced by $\varepsilon_m$ which is defined in \eqref{yu-8-19-2}\big)
   and
     \eqref{yu-8-19-3}, we obtain
     \begin{eqnarray*}\label{yu-8-19-5}
    &\;&e^{-C_9(\beta+2)\mu^{m}}\|e^{\mathcal{A}\ell_m}
    z\|_{L^2(\Omega;\mathbb{R}^2)}-e^{-C_9(\beta+2)\mu^{m+2}}\|e^{\mathcal{A}\ell_{m+2}}
    z\|_{L^2(\Omega;\mathbb{R}^2)}\nonumber\\
    &\leq&\frac{(C_7)^{\beta+1}e^{C_7(\beta+1)^2T}}{[(\ell_1-\ell)(\mu-1)]^{3\beta+4}}
    \int_{\ell_{m+1}}^{\ell_{m}} \chi_{E^R\cap (\ell_{m+1},\ell_{m})}(t)\|\chi_{{\mathcal{D}^R}_t} B e^{\mathcal{A}t}z\|_{L^1(\Omega;\mathbb{R})}dt.
\end{eqnarray*}
    Since $\|e^{\mathcal{A}t}\|_{\mathcal{L}(L^2(\Omega;\mathbb{R}^2))}
    \leq 1$ for all $t\in\mathbb{R}^+$, the above inequality together with \eqref{yu-8-19-4}
    implies that
   \begin{eqnarray*}\label{yu-8-19-6}
   e^{-C_9(\beta+2)\mu^{2}}\|e^{\mathcal{A}\ell_2}z\|_{L^2(\Omega;\mathbb{R}^2)}
    &\leq&\frac{(C_7)^{\beta+1}e^{C_7(\beta+1)^2T}}{[(\ell_1-\ell)(\mu-1)]^{3\beta+4}}
    \sum_{j=1}^{+\infty}
    \int_{\ell_{2j+1}}^{\ell_{2j}}\chi_{E^R\cap (\ell_{2j+1},\ell_{2j})}(t)\|\chi_{{\mathcal{D}^R}_t} B e^{\mathcal{A}t}z\|_{L^1(\Omega;\mathbb{R})}dt\nonumber\\
    &\leq&\frac{(C_7)^{\beta+1}e^{C_7(\beta+1)^2T}}{[(\ell_1-\ell)(\mu-1)]^{3\beta+4}}
    \int_{0}^{T}\chi_{E^R}(t)\|\chi_{{\mathcal{D}^R}_t} B e^{\mathcal{A}t}z\|_{L^1(\Omega;\mathbb{R})}dt.
\end{eqnarray*}
 Combining this with  the inequality $\|e^{\mathcal{A}T}z\|_{L^2(\Omega;\mathbb{R}^2)}\leq
    \|e^{\mathcal{A}\ell_2}z\|_{L^2(\Omega;\mathbb{R}^2)}$, yields \eqref{yu-8-21-7} with
    \begin{eqnarray*}
    M=M(a,b,T,\mathcal{D}):=\frac{(C_7)^{\beta+1}e^{\big(C_7(\beta+1)^2T+C_9(\beta+2)\mu^2\big)}}
    {[(\ell_1-\ell)(\mu-1)]^{3\beta+4}}.
 \end{eqnarray*}
\vskip 5pt
   \noindent \emph{Step 2. We prove  \eqref{yu-8-28-2}.}

    Since  $\mathcal{D}^R\subset \mathcal{D}$,  it follows from the result $(q_4)$ (cf. \emph{Step 1}) and \eqref{yu-8-21-7} that
\begin{equation*}\label{yu-8-22-1}
     \|y(T;y_0)\|_{L^2(\Omega;\mathbb{R}^2)}\leq M\int_{\Omega\times(0,T)}\chi_{\mathcal{D}}(x,t)| By(x,t;y_0)|dxdt\;\;\mbox{for all}\;\;y_0\in L^2(\Omega;\mathbb{R}^2).
\end{equation*}
   Combining this with \eqref{yu-9-2-11} immediately yields \eqref{yu-8-28-2}.

This completes the proof.
\end{proof}

\par
By Proposition \ref{yu-proposition-9-2-2} and the same argument as in the proof of Theorem \ref{yu-theorem-8-18-2}, we obtain the following result:

\begin{proposition}\label{yu-proposition-9-8-1}
Let $\widehat{B}$ be as defined in \eqref{yu-9-2-2}, and let $\mathcal{D}\subset\Omega\times (0,T)$ be a set of positive measure. There exists a constant $N=N(a,b,T,\mu_1,\mu_2,\mathcal{D})>0$ such that
\begin{equation}\label{yu-9-8-5}
\|y(T;y_0)\|_{L^2(\Omega;\mathbb{R}^2)}
\leq N\int_{\Omega\times(0,T)}\chi_{\mathcal{D}}(x,t)|\widehat{B}y(x,t;y_0)|dxdt
\quad\text{for all } y_0\in L^2(\Omega;\mathbb{R}^2).
\end{equation}
\end{proposition}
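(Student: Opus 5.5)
The plan is to run the proof of Theorem \ref{yu-theorem-8-18-2} again, with $B$ replaced by $\widehat{B}$ everywhere. The only ingredient that actually depends on $B$ is the integral-type interpolation inequality, Theorem \ref{yu-theorem-8-18-1}. Proposition \ref{yu-proposition-9-2-2} provides exactly that inequality for $\widehat{B}$: the same form \eqref{yu-9-3-1}, the same structure of the constant $K(E,\theta,S_1,S_2)$, and a constant $M=M(a,b,\mu_1,\mu_2,D)$. Everything else in Section \ref{yu-sec-4} concerns only the geometry of $\mathcal{D}$ and the contractivity of the semigroup.

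\emph{Localization.} Apply Lemma \ref{yu-lemma-9-3-1} in $\Omega\times(0,T)$ to get $(x_0,t_0)$ and $R$ with \eqref{yu-8-21-2}, and set $\mathcal{D}^R:=B_R^{d+1}(x_0,t_0)\cap\mathcal{D}$. Define ${\mathcal{D}^R}_t$ and $E^R$ as in \eqref{yu-8-21-4}. Lemma \ref{yu-lemma-8-19-1} then gives $(q_1)$--$(q_4)$, and Corollary \ref{yu-corollary-8-19-1} gives the spectral inequality \eqref{yu-8-21-5} for $t\in E^R$. None of this involves the observation operator, so it carries over verbatim.

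\emph{Telescoping.} Fix $\beta>0$ and $\mu$ as in \eqref{wang-4.6-2-6}, take a density point $\ell$ of $E^R$, and use Lemma \ref{a2} to get the sequence $\{\ell_m\}$ satisfying \eqref{yu-8-21-8}. Apply Proposition \ref{yu-proposition-9-2-2} on the shifted window with the identifications $T=\ell_m-\ell_{m+2}$, $E=\widehat{E}^R_m$, $S_1=\ell_{m+1}-\ell_{m+2}$, $S_2=\ell_m-\ell_{m+2}$, and the initial datum replaced by $e^{\mathcal{A}\ell_{m+2}}z$. This yields the analogue of \eqref{yu-8-18-12} with $\widehat{B}$ in place of $B$ and constant $C_7=C_7(a,b,T,\mu_1,\mu_2,\mathcal{D})$.

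Then take $\theta=1-(1+\beta)^{-1}$ and apply Young's inequality together with \eqref{yu-8-18-14} to get \eqref{yu-8-19-1} with $\widehat{B}$. With $\varepsilon_m$ as in \eqref{yu-8-19-2}, the identities \eqref{yu-8-19-3} turn \eqref{yu-8-19-1} into a difference inequality between the quantities at $\ell_m$ and $\ell_{m+2}$. Sum over even $m=2j$; the sum telescopes, and the tail vanishes by \eqref{yu-8-19-4} and $\|e^{\mathcal{A}t}\|\le1$. This gives
\[
\|e^{\mathcal{A}T}z\|_{L^2(\Omega;\mathbb{R}^2)}\leq M\int_0^T\chi_{E^R}(t)\|\chi_{{\mathcal{D}^R}_t}\widehat{B}e^{\mathcal{A}t}z\|_{L^1(\Omega;\mathbb{R})}dt .
\]

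\emph{Conclusion.} Use $(q_4)$ and $\mathcal{D}^R\subset\mathcal{D}$ to bound the right-hand side by $M\int_{\Omega\times(0,T)}\chi_{\mathcal{D}}|\widehat{B}y|\,dxdt$, which is \eqref{yu-9-8-5}.

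The main point to check, though it is routine, is that the $\widehat{B}$-dependence enters only through the multiplicative constant $M(a,b,\mu_1,\mu_2,D)$, and that the inequality keeps exactly the form \eqref{yu-8-18-1}. In particular, the exponents $|E\cap[S_1,S_2]|^{-3}$, $\frac{S_2}{1-\theta}$ and $\frac{1}{\theta S_1}$ must be unchanged. This is what the geometric decay/growth bookkeeping in \eqref{yu-8-18-14}--\eqref{yu-8-19-4} relies on, and Proposition \ref{yu-proposition-9-2-2} guarantees it. Consequently the constants $C_7$, $C_9$ and the final $N$ pick up only an additional dependence on $\mu_1,\mu_2$.
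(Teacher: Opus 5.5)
Your proposal is correct and is exactly the paper's argument: the paper obtains this proposition by invoking Proposition~\ref{yu-proposition-9-2-2} in place of Theorem~\ref{yu-theorem-8-18-1} and repeating the proof of Theorem~\ref{yu-theorem-8-18-2} verbatim, which is what you do. As an aside, the proof of Proposition~\ref{yu-proposition-9-2-2} shows that $\widehat{B}e^{\mathcal{A}t}z=Be^{\mathcal{A}t}\varphi$, where $\varphi$ is $\sqrt{\mu_1^2+\mu_2^2}$ times a rotation of $z$ that commutes with the semigroup, so you could also deduce \eqref{yu-9-8-5} directly from Theorem~\ref{yu-theorem-8-18-2} with $N$ replaced by $N/\sqrt{\mu_1^2+\mu_2^2}$.
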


The following is a direct consequence of Proposition \ref{yu-proposition-9-8-1}:

\begin{corollary}\label{yu-corollary-9-8-1}
Let $\mathcal{D}\subset\Omega\times (0,T)$ be a set of positive measure, and let $K$ be a non-zero $2\times 2$ real matrix. There exists a constant $N=N(a,b,T,K,\mathcal{D})>0$ such that \eqref{yu-9-8-5} holds with $\widehat{B}$ replaced by $K$.
\end{corollary}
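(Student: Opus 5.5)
The plan is to reduce Corollary \ref{yu-corollary-9-8-1} to Proposition \ref{yu-proposition-9-8-1} by extracting from $K$ a single nonzero row. Write
\[
K=\left(\begin{array}{cc} k_{11} & k_{12}\\ k_{21} & k_{22}\end{array}\right)\neq 0 .
\]
Then at least one row $(k_{i1},k_{i2})$ is nonzero. Fix such an index $i\in\{1,2\}$ and set $\widehat{B}:=(\mu_1,\mu_2):=(k_{i1},k_{i2})$. Then $|\mu_1|+|\mu_2|\neq 0$, so $\widehat{B}$ is an admissible observation operator in the sense of \eqref{yu-9-2-2}.

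Applying Proposition \ref{yu-proposition-9-8-1} with this $\widehat{B}$ gives a constant $N_0=N(a,b,T,k_{i1},k_{i2},\mathcal{D})>0$ such that
\[
\|y(T;y_0)\|_{L^2(\Omega;\mathbb{R}^2)}\leq N_0\int_{\Omega\times(0,T)}\chi_{\mathcal{D}}(x,t)\,|\widehat{B}y(x,t;y_0)|\,dxdt
\]
for all $y_0\in L^2(\Omega;\mathbb{R}^2)$.

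It then suffices to compare the scalar quantity $|\widehat{B}y(x,t;y_0)|$ with the Euclidean norm $|Ky(x,t;y_0)|$ in $\mathbb{R}^2$. Since $\widehat{B}y$ is one component of the vector $Ky$, we have $|\widehat{B}y(x,t;y_0)|\leq |Ky(x,t;y_0)|$ pointwise. Integrating against $\chi_{\mathcal{D}}$ yields \eqref{yu-9-8-5} with $\widehat{B}$ replaced by $K$ and $N:=N_0$. This constant depends only on $a,b,T,\mathcal{D}$ and the entries of $K$, through the choice of the row index $i$, which is itself determined by $K$.

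I expect no genuine obstacle here. The only point to check is that the norm used for the vector-valued integrand $|Ky|$ dominates each component; this holds for the Euclidean norm and, up to a harmless constant, for any norm on $\mathbb{R}^2$.
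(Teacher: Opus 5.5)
Your proof is correct and is essentially the argument the paper intends when it calls this corollary a direct consequence of Proposition \ref{yu-proposition-9-8-1}. You take a nonzero row of $K$ as $\widehat{B}$, apply the proposition, and then use the pointwise bound $|\widehat{B}y|\leq|Ky|$.
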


\section{Applications}\label{yu-sec-5}
The observability inequality established in Theorem  \ref{yu-theorem-8-18-2} implies $L^\infty$-null controllability, which in turn yields the bang-bang property of time-optimal control problem via a standard variational argument. In this section, we consider these problems.

    Let  $\mathcal{D}\subset \Omega\times(0,T)$ and $\omega\subset\Omega$ be two sets of positive measure in $\mathbb{R}^{d+1}$ and $\mathbb{R}^{d}$, respectively.
     Consider the following controlled coupled parabolic systems with  initial state $v_0:=(v_{0,1},v_{0,2})^\top\in L^2(\Omega;\mathbb{R}^2)$:
\begin{equation}\label{yu-9-8-10}
\begin{cases}
    \partial_tv_1=a\Delta v_1+b\Delta v_2+\chi_{\mathcal{D}}u&\mbox{in}\;\;\Omega\times(0,T),\\
    \partial_tv_2=-b\Delta v_1+a\Delta v_2 &\mbox{in}\;\;\Omega\times(0,T),\\
    (v_1,v_2)^\top=0&\mbox{on}\;\;\partial\Omega\times(0,T),\\
(v_1(0),v_2(0))^\top=v_0 &\mbox{in}\;\;\Omega.
\end{cases}
\end{equation}
    where $u\in L^\infty(\Omega\times(0,T);\mathbb{R})$ is a control, and
    \begin{equation}\label{wang5.2-2-7}
\begin{cases}
    \partial_tv_1=a\Delta v_1+b\Delta v_2+\chi_{\omega}u&\mbox{in}\;\;\Omega\times\mathbb{R}^+,\\
    \partial_tv_2=-b\Delta v_1+a\Delta v_2 &\mbox{in}\;\;\Omega\times\mathbb{R}^+,\\
    (v_1,v_2)^\top=0&\mbox{on}\;\;\partial\Omega\times\mathbb{R}^+,\\
(v_1(0),v_2(0))^\top=v_0 &\mbox{in}\;\;\Omega,
\end{cases}
\end{equation}
    where $u$  is a control taken from the admissible control set
        \begin{equation*}\label{yu-9-10-1}
    \mathcal{U}_{ad}:=\{u\in L^\infty(\Omega\times\mathbb{R}^+;\mathbb{R}):\nu_1\leq u(x,t)\leq \nu_2\;\;\mbox{for a.e.}\;\;(x,t)\in\Omega\times\mathbb{R}^+\}\;\;\mbox{with}\;\;\nu_1<\nu_2.
\end{equation*}

    We denote by $v(\cdot;u,v_0)=(v_1(\cdot;u,v_0),v_2(\cdot;u,v_0))^\top$ the solution
    to
    control systems \eqref{yu-9-8-10} and  \eqref{wang5.2-2-7} corresponding to the initial state
    $v_0$ and  control $u$.
     We note that the control operators in   equations \eqref{yu-9-8-10} and  \eqref{wang5.2-2-7}
     are  $\chi_\mathcal{D}B^\top$ and $\chi_\omega B^\top$, respectively, where $B$ is defined in \eqref{yu-9-2-11}. The dual equation of \eqref{yu-9-8-10} is given by \eqref{yu-8-28-1}. Moreover, the solution
     $ v(t;u,v_0)$ to  \eqref{wang5.2-2-7} admits the representation
    \begin{equation}\label{yu-9-8-12}
   v(t;u,v_0)=e^{\mathcal{A}^*t}v_0+\int_0^te^{\mathcal{A}^*(t-s)}\chi_{\omega}B^\top u(s)ds
    \;\;\mbox{for all}\;\;t\in\mathbb{R}^+,
\end{equation}
    where $\mathcal{A}$ is defined in \eqref{yu-9-2-10}.

\subsection{$L^\infty$-null controllability}
We consider the null controllability of system \eqref{yu-9-8-10}.
The following results are direct consequences of Theorem \ref{yu-theorem-8-18-2}.

\begin{theorem}\label{yu-theorem-9-8-1}
    The following statements hold:
\begin{enumerate}
  \item [$(i)$] It holds that $L>0$, where
\begin{equation}\label{yu-2-28-1}
    L:=\inf_{y_0\in L^2(\Omega;\mathbb{R}^2)\setminus\{0\}}
    \frac{\int_{\Omega\times(0,T)}\chi_{\mathcal{D}}(x,t)|y_1(x,t;y_0)|dxdt}
    {\|y(T;y_0)\|_{L^2(\Omega;\mathbb{R}^2)}}.
\end{equation}
  \item [$(ii)$] For each $v_0\in L^2(\Omega;\mathbb{R}^2)$, there exists a control $u\in L^\infty((0,T)\times\Omega;\mathbb{R})$, satisfying
     \begin{equation*}\label{yu-9-11-4}
    \|u\|_{ L^\infty((0,T)\times\Omega;\mathbb{R})}\leq L^{-1}\|v_0\|_{L^2(\Omega;\mathbb{R}^2)},
\end{equation*}
          such that the solution
    $v(\cdot;u,v_0)$ to  \eqref{yu-9-8-10} satisfies
              $v(T;u,v_0)=0$.
\end{enumerate}
    \end{theorem}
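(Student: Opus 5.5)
Part $(i)$ is an immediate consequence of Theorem~\ref{yu-theorem-8-18-2}. By \eqref{yu-8-28-2}, every $y_0$ with $y(T;y_0)\neq 0$ satisfies
\[
\frac{\int\chi_{\mathcal{D}}|y_1|}{\|y(T;y_0)\|}\geq N^{-1}.
\]
We read $L$ in the usual way: the infimum is over those $y_0$ for which $y(T;y_0)\neq0$. Backward uniqueness for the analytic semigroup $e^{\mathcal{A}t}$ (visible from \eqref{yu-9-2-12}) shows that $y(T;y_0)=0$ only when $y_0=0$, so this reading agrees with the infimum over $y_0\neq 0$. Hence $L\geq N^{-1}>0$.

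For $(ii)$ I would use the standard Hahn--Banach duality argument in $L^1$--$L^\infty$ form.

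\textbf{Duality identity.} First I would verify that the dual system of \eqref{yu-9-8-10} is \eqref{yu-8-28-1}. The adjoint of $\mathcal{A}^*$ is $\mathcal{A}$, and the control operator is $\chi_{\mathcal{D}}B^\top$. For $y_T$ in the range and $y(t)=e^{\mathcal{A}(T-t)}y_T$ (time-reversed), this gives
\[
\langle v(T;u,v_0),y_T\rangle=\langle v_0,e^{\mathcal{A}T}y_T\rangle+\int_0^T\!\!\int_\Omega \chi_{\mathcal{D}}\,u\,(By)\,dx\,dt .
\]
Because the time reversal $t\mapsto T-t$ maps $\mathcal{D}$ to a reflected set, I would either state the observability for the reflected set (Theorem~\ref{yu-theorem-8-18-2} applies to any set of positive measure) or, equivalently, index the dual solution by the initial datum $y_0$ so that $y(T;y_0)$ pairs with $v_0$. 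I will set up the pairing so that it reads
\[
\langle v(T),z\rangle=\langle v_0, y(T;z)\rangle+\int\chi_{\mathcal{D}}\,u\,y_1(\cdot;z),
\]
after renaming the time variable.

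\textbf{Linear functional.} Define $X:=\{\chi_{\mathcal{D}}y_1(\cdot;z):z\in L^2\}\subset L^1(\Omega\times(0,T))$, and on $X$ the functional
\[
\Lambda(\chi_{\mathcal{D}}y_1(\cdot;z)):=-\langle v_0,y(T;z)\rangle .
\]
This is well defined: if $\chi_{\mathcal{D}}y_1(\cdot;z)=0$, then $y(T;z)=0$ by \eqref{yu-8-28-2}. By the definition of $L$,
\[
|\Lambda(\cdot)|\leq \|v_0\|\,\|y(T;z)\|\leq L^{-1}\|v_0\|\,\|\chi_{\mathcal{D}}y_1\|_{L^1}.
\]

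\textbf{Extension and conclusion.} By Hahn--Banach, $\Lambda$ extends to $L^1(\Omega\times(0,T))$ with norm at most $L^{-1}\|v_0\|$. By the duality $(L^1)^*=L^\infty$, it is represented by some $u\in L^\infty$ with $\|u\|_{L^\infty}\leq L^{-1}\|v_0\|$. Plugging this $u$ into the duality identity gives $\langle v(T;u,v_0),z\rangle=0$ for all $z$, hence $v(T)=0$.

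\textbf{Main obstacle.} I expect the only delicate step to be the bookkeeping in the duality identity. The sign conventions differ: \eqref{yu-9-8-10} has $+b$ in the first equation and $-b$ in the second, which is exactly $\mathcal{A}^*$. The time reversal must also be handled correctly, so that the observation set appearing in the identity is genuinely $\mathcal{D}$, or a set to which Theorem~\ref{yu-theorem-8-18-2} equally applies. Everything else is routine.
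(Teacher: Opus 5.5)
Part $(i)$ is the paper's argument, and your backward-uniqueness remark only makes the quotient well defined. For $(ii)$, your Hahn--Banach argument in $L^1$--$L^\infty$ duality is the ``standard duality argument'' the paper cites without proof. The gap is in the step you flagged yourself: the time reflection cannot be renamed away. Pairing the mild solution of \eqref{yu-9-8-10} with $z$ gives
\[
\langle v(T;u,v_0),z\rangle=\langle v_0,y(T;z)\rangle+\int_0^T\!\!\int_\Omega \chi_{\mathcal D}(x,T-t)\,u(x,T-t)\,y_1(x,t;z)\,dx\,dt ,
\]
because the kernel is $Be^{\mathcal A(T-s)}z=y_1(\cdot,T-s;z)$. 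Indexing the dual solution by its initial datum makes $y(T;z)$ pair with $v_0$, but it leaves the observation on the reflected set. No re-indexing can help: for $z\neq 0$, the norm of $s\mapsto e^{\mathcal A(T-s)}z$ increases in $s$, while every forward trajectory $e^{\mathcal As}z'$ decays. So your identity with $\chi_{\mathcal D}\,u\,y_1(\cdot;z)$ holds only after replacing $\mathcal D$ by $\widetilde{\mathcal D}:=\{(x,T-t):(x,t)\in\mathcal D\}$ and $u$ by its reflection. Your ``or, equivalently'' is therefore not an equivalence. Done correctly, your argument runs on $X=\{\chi_{\widetilde{\mathcal D}}y_1(\cdot;z)\}$ and yields some $\tilde u$. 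Setting $u(x,s):=\tilde u(x,T-s)$ then gives $v(T;u,v_0)=0$ with $\|u\|_{L^\infty}\le \widetilde L^{-1}\|v_0\|$. Here $\widetilde L$ is \eqref{yu-2-28-1} with $\mathcal D$ replaced by $\widetilde{\mathcal D}$, and $\widetilde L>0$ by Theorem~\ref{yu-theorem-8-18-2} applied to $\widetilde{\mathcal D}$.

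This is not just a different constant: the bound $L^{-1}$ cannot be reached in general. The same identity shows that any $u$ steering $v_0:=y(T;z)$ to $0$ satisfies $\|y(T;z)\|^2\le \|u\|_{L^\infty}\int\chi_{\widetilde{\mathcal D}}|y_1(\cdot;z)|\,dx\,dt$. So $\widetilde L^{-1}$ is the sharp control cost. Now take $\mathcal D=\Omega\times(0,\epsilon)$ with $\epsilon<T$, and let $z=e^{\mathcal A(T-\epsilon)}z'$. The $\widetilde L$-quotient of $z'$ equals $\int_0^\epsilon\|y_1(t;z)\|_{L^1}dt/\|y(\epsilon;z)\|$. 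By $\|e^{\mathcal At}w\|\le e^{-a\lambda_1 t}\|w\|$ (from \eqref{yu-9-2-12}), this is at most $e^{-a\lambda_1(T-\epsilon)}$ times the $L$-quotient of $z$. Such $z$ are dense, and the $L$-quotient is continuous on $L^2\setminus\{0\}$, so $L\ge e^{a\lambda_1(T-\epsilon)}\widetilde L>\widetilde L$. Hence some $v_0$ need controls of norm strictly larger than $L^{-1}\|v_0\|$. Your proposal, like the paper's one-line appeal to duality (which has the same blind spot), therefore proves $(ii)$ only with $L$ defined through the reflected set. Equivalently, $L$ must be defined through the backward adjoint $\varphi(t)=e^{\mathcal A(T-t)}z$ observed on $\mathcal D$. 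The null-controllability conclusion itself survives.
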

\begin{proof}
    We first prove $(i)$. Let $N>0$ be given such that \eqref{yu-8-28-2} holds, whose existence is guaranteed by Theorem
    \ref{yu-theorem-8-18-2}. Then, we have
    $$
    \frac{\int_{\Omega\times(0,T)}\chi_{\mathcal{D}}(x,t)|y_1(x,t;y_0)|dxdt}
    {\|y(T;y_0)\|_{L^2(\Omega;\mathbb{R}^2)}}\geq N^{-1}
    \;\;\mbox{for any}\;\;y_0\in L^2(\Omega;\mathbb{R}^2)\setminus\{0\}.$$
    By \eqref{yu-2-28-1}, it follows that $L\geq N^{-1}>0$, which proves $(i)$.
\par

    We now prove $(ii)$. From $(i)$, we have
$$
    \|y(T;y_0)\|_{L^2(\Omega;\mathbb{R}^2)}\leq L^{-1}\int_{\Omega\times(0,T)}\chi_{\mathcal{D}}(x,t)|y_1(x,t;y_0)|dxdt
\quad\text{for all } y_0\in L^2(\Omega;\mathbb{R}^2).
$$
    By the standard duality argument (see, for instance, \cite[Lemma 5.1]{Wang-Wang-Zhang}), $(ii)$ is well known. Thus, we omit its proof.
\end{proof}

\subsection{Time-optimal control problem}
 Let
 $\mathcal{S}$ be a target set, which is assumed to be  closed in $L^2(\Omega;\mathbb{R}^2)$.
We fix an arbitrary  $v_0\in L^2(\Omega;\mathbb{R}^2)\setminus\mathcal{S}$ and consider the following time-optimal control problem:
$$
     \mbox{\textbf{(TP)}}\;\; T^*:=\inf_{u\in \mathcal{U}_{ad}}\{T>0:v(T;u,v_0)\in \mathcal{S}\},
$$
   where $v(\cdot;u,v_0)$ stands for the solution to  \eqref{wang5.2-2-7}. In what follows, we investigate the bang-bang property of optimal controls for problem $\textbf{(TP)}$.

\begin{theorem}\label{yu-theorem-9-10-1}
Let $T^*$ and $u^*$ be the optimal time and an optimal control for \textbf{(TP)}, respectively. Then $u^*$
possesses the bang-bang property, i.e.,
     $u^*(x,t)\in\{\nu_1,\nu_2\}$ for a.e. $(x,t)\in\omega\times(0,T^*)$.
     \end{theorem}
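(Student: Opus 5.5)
The argument is the classical one from \cite[Theorem 1.2]{Wang2009SICON}, adapted to the control operator $\chi_\omega B^\top$. It proceeds by contradiction and uses the null controllability of Theorem~\ref{yu-theorem-9-8-1} on a measurable set to improve a non-bang-bang optimal control.

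Assume $u^*$ is not bang-bang. Then there are $\delta>0$ and a measurable set $\mathcal{D}\subset\omega\times(0,T^*)$ with $|\mathcal{D}|>0$ on which $\nu_1+\delta\le u^*\le\nu_2-\delta$. Shrinking $\mathcal{D}$, we may take $\mathcal{D}\subset\omega\times(0,T^*-\tau_0)$ for some $\tau_0>0$. For small $\tau\in(0,\tau_0)$ we shift the control in time and set
\[
u_\tau(x,t):=u^*(x,t+\tau)\quad\text{for }t\ge0 .
\]
The corresponding state reaches $\mathcal{S}$ at time $T^*-\tau$ provided it starts from $v(\tau;u^*,v_0)$ rather than $v_0$. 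The error in the initial state is
\[
w_\tau:=v(\tau;u^*,v_0)-v_0 .
\]
To absorb it, on $(0,T^*-\tau)$ we use the control $u_\tau+\chi_{\mathcal{D}_\tau}h_\tau$, where $\mathcal{D}_\tau:=\mathcal{D}-(0,\tau)$ is the shifted set restricted to $\Omega\times(0,T^*-\tau_0)$. Here $h_\tau$ is chosen by Theorem~\ref{yu-theorem-9-8-1}(ii), applied to system \eqref{yu-9-8-10} with observation set $\mathcal{D}_\tau$ on horizon $T^*-\tau_0$, so that $h_\tau$ steers $-w_\tau$, or rather $v_0-v(\tau;u^*,v_0)$ propagated by $e^{\mathcal{A}^*t}$, to $0$. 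By linearity, \eqref{yu-9-8-12} then gives
\[
v(T^*-\tau;\,u_\tau+\chi_{\mathcal{D}_\tau}h_\tau,\,v_0)=v(T^*;u^*,v_0)\in\mathcal{S}.
\]
The control is extended by zero after time $T^*-\tau_0$; one can equally put $h_\tau$ on $(0,T^*-\tau_0)$ and keep $u_\tau$ afterwards.

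Admissibility is the central estimate, $\|h_\tau\|_{L^\infty}\le L_\tau^{-1}\|w_\tau\|_{L^2}$. Since $w_\tau\to0$ in $L^2$ as $\tau\to0$ by continuity of the mild solution, it suffices to have $\|h_\tau\|_{L^\infty}<\delta$. On $\mathcal{D}_\tau$ we have $u_\tau\in[\nu_1+\delta,\nu_2-\delta]$, so the perturbed control stays in $\mathcal{U}_{ad}$. This contradicts the minimality of $T^*$.

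\textbf{Main obstacle.} The difficulty, which is the subtlety mentioned in Remark~\ref{yu-remark-8-28-1}(iii), is that the observation set $\mathcal{D}_\tau$ moves with $\tau$. We therefore need a lower bound $L_\tau\ge c>0$ that is uniform for small $\tau$. I would get it by going back into the proof of Theorem~\ref{yu-theorem-8-18-2}. There the constant depends on $\mathcal{D}$ only through a ball $B_R^{d+1}(x_0,t_0)$ with $|B_R\cap\mathcal{D}|\ge\frac12|B_R|$, through $|\mathcal{D}^R|$, and through a density point $\ell$ of $E^R$ with its associated $\ell_1$ from Lemma~\ref{a2}. Under the time translation these data translate rigidly: the ball center becomes $(x_0,t_0-\tau)$, the density point becomes $\ell-\tau$, and $\ell_1$ becomes $\ell_1-\tau$. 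The only requirement is $t_0-\tau-4R>0$, which holds for $\tau$ small, and the horizon $T^*-\tau\ge T^*-\tau_0$ stays in a compact range. Every constant $C_7,C_9,M$ therefore stays bounded. The cleanest way to make this rigorous is to apply Step~1 of the proof of Theorem~\ref{yu-theorem-8-18-2} directly to the translated configuration and to use $\|e^{\mathcal{A}T}z\|\le\|e^{\mathcal{A}(\ell_2-\tau)}z\|$.

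If that uniformity turns out to be awkward, I would avoid it. Instead I would use a fixed subset $\widetilde{\mathcal{D}}\subset\mathcal{D}\cap\{t>\tau_0\}$ and place the correction $h$ on $\widetilde{\mathcal{D}}$ in the original time frame, via a duality argument on $(\tau,T^*)$. This relies on semigroup time invariance, so observability on $\widetilde{\mathcal{D}}-(0,\tau)$ is equivalent to observability on $\widetilde{\mathcal{D}}$ over the shifted interval.
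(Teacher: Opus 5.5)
Your argument is correct and has the same skeleton as the paper's proof. In both, one argues by contradiction and shifts the optimal control to $u^*(\cdot+\tau)$. The initial discrepancy $v(\tau;u^*,v_0)-v_0$ (the paper's $v_\delta$) is then absorbed by an $L^\infty$ null control from Theorem~\ref{yu-theorem-9-8-1}(ii), supported on the shifted set where $u^*$ has slack. This control becomes small as $\tau\to0$ once its cost is bounded uniformly in $\tau$.

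You differ from the paper at that last, genuinely delicate point. The paper works with $\mathcal{Q}^R_\delta$ on horizon $T^*-\delta$. It uses the monotonicity $0<L_{\delta_2}\le L_{\delta_1}$ for $0<\delta_1<\delta_2\le\delta_0$, which is a soft consequence of time invariance. Indeed, the observation over $\mathcal{Q}^R_{\delta_1}$ dominates the observation over the translate $\mathcal{Q}^R_{\delta_2}+(\delta_2-\delta_1)$, and the latter equals the $\delta_2$-numerator evaluated at $e^{\mathcal{A}(\delta_2-\delta_1)}y_0$. Hence $L_\delta\ge L_{\delta_0}$, and Theorem~\ref{yu-theorem-8-18-2} is invoked only once, as a black box.

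Your main route instead reopens Step~1 of the proof of Theorem~\ref{yu-theorem-8-18-2} and checks that all its data are translation covariant, so that one constant serves every small $\tau$. This is valid provided you translate one fixed choice of the ball, $E^R$, $\ell$ and $\ell_1$, rather than recomputing them for each $\tau$. The reason is that the threshold $|\mathcal{D}^R|/(2T)$ defining $E^R$ depends on the horizon. Freezing the horizon at $T^*-\tau_0$, as your construction does, makes this automatic. The cost is that your argument relies on the internal structure of that proof rather than on its statement. The paper's monotonicity avoids this, and so does your own fallback with a fixed set on $(\tau,T^*)$, which is essentially the same idea.

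One wording point: only the correction $h_\tau$ should be extended by zero after $T^*-\tau_0$. On $(T^*-\tau_0,T^*-\tau)$ the control must remain $u_\tau$, both for the identity at time $T^*-\tau$ and because $0$ need not belong to $[\nu_1,\nu_2]$.
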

\begin{proof}
    Since $v_0\in L^2(\Omega;\mathbb{R}^2)\setminus\mathcal{S}$, it is clear that $T^*>0$.

We now argue by contradiction and suppose that the bang-bang property does not hold.
    Then, there exist $\varepsilon>0$ and a set $\mathcal{D}\subset \omega\times(0,T^*)$ of positive measure such that
\begin{equation}\label{yu-9-10-2}
    \nu_1+\varepsilon<u^*(x,t)<\nu_2-\varepsilon\;\;\mbox{for all}\;\;(x,t)\in \mathcal{D}.
\end{equation}
  We aim to find $\delta^\sharp\in(0,T^*)$ and $w^\sharp\in \mathcal{U}_{ad}$ such that
\begin{equation}\label{yu-9-11-1}
    v(T^*-\delta^\sharp;w^\sharp,v_0)=v(T^*;u^*,v_0)(\in\mathcal{S}).
\end{equation}
  Once this is achieved, we reach a contradiction to the optimality of $T^*$,
 which completes the proof.

 To prove \eqref{yu-9-11-1}, we first establish some preliminary results.
  By Lemma \ref{yu-lemma-9-3-1}, there exist $R>0$ and $(x_0,t_0)\in \mathcal{D}$ such that
  \begin{equation}\label{wang-5-7-2-8}
   B_{4R}^{d+1}(x_0,t_0)\subset \Omega\times (0,T^*);\;\;|B_R^{d+1}(x_0,t_0)\cap \mathcal{D}|>0.
\end{equation}
We define the following set and its time slices:
\begin{equation*}\label{yu-9-10-3}
    \mathcal{D}^R:=B_R^{d+1}(x_0,t_0)\cap \mathcal{D};\;\;\;  {\mathcal{D}^R}_t:=\{x\in\Omega:(x,t)\in\mathcal{D}^R\}\;\;\mbox{with}\;\;t\in (0,T^*),
   \end{equation*}
and  further define the set
    \begin{equation*}\label{yu-9-10-4}
    E^R:=\left\{t\in(0,T^*):|{\mathcal{D}^R}_t|\geq \frac{|\mathcal{D}^R|}{2T^*}\right\}.
\end{equation*}
By the above definitions  and \eqref{wang-5-7-2-8}, we  apply Lemma \ref{yu-lemma-8-19-1} to obtain
the following properties:
\begin{enumerate}
      \item [$(c_1)$] ${\mathcal{D}^R}_t$ is measurable for a.e. $t\in (0,T^*)$;
      \item[$(c_2)$]
      the function $t\mapsto \int_{\Omega}\chi_{{\mathcal{D}^R}_t}(x)dx$ is integrable over $(0,T^*)$;
      \item [$(c_3)$] $E^R$ is measurable and $|E^R|\geq \frac{|\mathcal{D}^R|}{2|B_R^d|}$.
\end{enumerate}
 By  $(c_3)$, there exists $0<\delta_0<\min\{1,T^*\}$ such that
\begin{equation}\label{yu-9-10-5}
    |E^R_\delta|\geq  \frac{|\mathcal{D}^R|}{4|B_R^d|}\;\;\mbox{for all}\;\;\delta\in(0,\delta_0],\;\;
    \mbox{where}\;\;E^R_\delta:=(E^R-\delta)\cap\mathbb{R}^+.
\end{equation}
    Moreover, for all $0\leq\delta_1<\delta_2\leq \delta_0$ and $\delta\in(0,\delta_0]$,
    \begin{equation}\label{yu-9-11-b-1}
    \mathcal{Q}^R_{\delta_2}\subset\mathcal{Q}^R_{\delta_1}\;\;\mbox{and} \;\; \mathcal{Q}^R_{\delta}\subset\mathcal{D}_\delta:=\{(x,t-\delta)
    \in\Omega\times(0,T^*-\delta):(x,t)\in\mathcal{D}\},
\end{equation}
where
 \begin{equation*}
 \mathcal{Q}^R_\delta:=\{(x,t):t\in E^R_\delta,\;x\in{{\mathcal{D}}^R}_{t+\delta}\},\;\delta\in(0,\delta_0].
  \end{equation*}
   By the definition of $E^R_{\delta}$, $(c_1)$ and $(c_2)$, we have that, for each $\delta\in (0,\delta_0]$, ${\mathcal{D}^R}_{t+\delta}$
 is measurable for a.e. $t\in (0,T^*-\delta)$, and the function  $t\mapsto \int_\Omega\chi_{{\mathcal{D}^R}_{t+\delta}}(x)dx$ is integrable over $(0,T^*-\delta)$. This implies that $\mathcal{Q}^R_\delta$ is measurable for all $\delta\in (0,\delta_0]$.

  We  claim that
\begin{equation}\label{yu-9-11-b-2}
    |\mathcal{Q}^R_\delta|>0\;\;\mbox{for all}\;\;\delta\in(0,\delta_0].
\end{equation}
Indeed, combining  $(c_1)$, $(c_2)$ with \eqref{yu-9-10-5} and the definition of
 $E^R$,
 we obtain
 \begin{equation*}
 |\mathcal{Q}^R_\delta|=\int_0^{T^*-\delta}\int_{\Omega}\chi_{E^R_\delta}(t)
    \chi_{{\mathcal{D}^R}_{t+\delta}}(x)dxdt \geq \frac{|\mathcal{D}^R|^2}{8T^*|B_R^d|}
    \;\;\mbox{for all}\;\;\delta\in(0,\delta_0],
  \end{equation*}
 which yields \eqref{yu-9-11-b-2}.

 We now proceed to construct the desired quantity
  $\delta^\sharp$ and control $w^\sharp$. To this end, we first observe  from  \eqref{yu-9-8-12} that for all
 $\delta\in(0,\delta_0]$ and  $w\in \mathcal{U}_{ad}$,
     \begin{eqnarray}\label{yu-9-11-2}
    &\;&v(T^*;u^*,v_0)-v(T^*-\delta;w,v_0)\nonumber\\
    &=&e^{\mathcal{A}^*(T^*-\delta)}v_\delta
   +\int_0^{T^*-\delta}e^{\mathcal{A}^*(T^*-\delta-s)}\chi_{\omega}B^\top
    [u^*(s+\delta)-w(s)]ds,
\end{eqnarray}
    where
\begin{equation}\label{yu-9-11-3}
    v_\delta:=(e^{\mathcal{A}^*\delta}-I)v_0+\int_0^\delta e^{\mathcal{A}^*(\delta-s)}\chi_{\omega}B^\top u^*(s)ds.
\end{equation}
 For each $\delta\in (0,\delta_0]$, let  $L_\delta$ be defined by \eqref{yu-2-28-1} where $T$ and $\mathcal{D}$ are replaced by $T^*-\delta$
   and $Q_\delta^R$, respectively.
   A direct verification shows that
   \begin{equation}\label{yu-9-11-5}
    0<L_{\delta_2}\leq L_{\delta_1},\;\;\mbox{when}\;\;0< \delta_1<\delta_2\leq \delta_0.
\end{equation}
  For each $\delta\in (0,\delta_0]$, using \eqref{yu-9-11-b-2} and \eqref{yu-9-11-5}, we  apply Theorem \ref{yu-theorem-9-8-1} (with $\mathcal{D}$ and $T$ replaced by $Q^R_\delta$ and $T^*-\delta$,
  respectively) to  obtain
  a control
        $f_\delta\in L^\infty(\Omega\times(0,T^*-\delta);\mathbb{R})$ such that
\begin{equation}\label{yu-9-11-9}
    \|f_\delta\|_{L^\infty(\Omega\times(0,T^*-\delta);\mathbb{R})}\leq
    (L_{\delta_0})^{-1}\|v_\delta\|_{L^2(\Omega;\mathbb{R}^2)}
\end{equation}
    and
\begin{eqnarray}\label{yu-9-11-10}
    e^{\mathcal{A}^*(T^*-\delta)}v_\delta&=& \int_0^{T^*-\delta}e^{\mathcal{A}^*(T^*-\delta-s)}
    \chi_{\mathcal{Q}_\delta^R}(\cdot,s)B^\top f_\delta(s)ds\nonumber\\
    &=&\int_0^{T^*-\delta}e^{\mathcal{A}^*(T^*-\delta-s)}
    \chi_{\omega}B^\top \left(\chi_{\mathcal{Q}_\delta^R}(\cdot,s)f_\delta(s)\right)ds.
\end{eqnarray}
    (Here, we used the inclusion  $\mathcal{Q}_\delta^R\subset \omega\times (0,T^*)$, which follows from \eqref{yu-9-11-b-1} and the assumption $\mathcal{D}\subset \omega\times (0,T^*)$.)

    By \eqref{yu-9-11-3}, there exists  $\delta^\sharp\in(0,\delta_0]$ such that
\begin{equation}\label{yu-9-11-11}
    \|v_{\delta^{\sharp}}\|_{L^2(\Omega;\mathbb{R}^2)}\leq \frac{1}{2}\varepsilon L_{\delta_0}.
\end{equation}
  We then  define
\begin{equation}\label{yu-9-11-12}
    w^\sharp (x,t):=
\begin{cases}
    u^*(x,t+\delta^\sharp)+\chi_{\mathcal{Q}_{\delta^\sharp}^R}(x,t)f_{\delta^{\sharp}}(x,t)
    &\mbox{if}\;\;(x,t)\in\Omega\times(0,T^*-\delta^\sharp],\\
    \frac{\nu_1+\nu_2}{2}&\mbox{if}\;\;(x,t)\in\Omega\times(T^*-\delta^\sharp,+\infty).
\end{cases}
\end{equation}
    Since $u^*$, $\chi_{\mathcal{Q}_{\delta^\sharp}^R}$ and $f_{\delta^{\sharp}}$ are measurable, it is clear that
    $w^\sharp$ is measurable over $\Omega\times\mathbb{R}^+$.

 We now  claim that
\begin{eqnarray}\label{yu-9-11-13}
    \nu_1\leq w^\sharp(x,t)\leq\nu_2 \;\;\mbox{for a.e.}\;\;(x,t)\in \Omega\times\mathbb{R}^+.
\end{eqnarray}
    To prove \eqref{yu-9-11-13}, it suffices to show
   \begin{eqnarray}\label{wang-5-19-2-9}
    w^\sharp (x,t)\in \left[\nu_1+\frac{\varepsilon}{2}, \nu_2-\frac{\varepsilon}{2}\right]\;\;\mbox{for a.e.}\;\;(x,t)\in \mathcal{Q}_{\delta^\sharp}^R,
\end{eqnarray}
since we have
\begin{eqnarray*}
w^\sharp (x,t)=u^*(x,t+\delta^\sharp)\;\;\mbox{for all}\;\;(x,t)\in (\Omega\times(0,T^*-\delta^\sharp])\setminus \mathcal{Q}_{\delta^\sharp}^R;
\end{eqnarray*}
\begin{eqnarray*}
w^\sharp (x,t)=\frac{\nu_1+\nu_2}{2} \;\;\mbox{for all}\;\;(x,t)\in\Omega\times(T^*-\delta^\sharp,+\infty).
\end{eqnarray*}

   To prove \eqref{wang-5-19-2-9}, we first use  \eqref{yu-9-11-b-1} to obtain
   \begin{eqnarray*}
\chi_{\mathcal{Q}_{\delta^\sharp}^R}(x,t)\leq \chi_{\mathcal{D}_{\delta^\sharp}}(x,t)=\chi_{\mathcal{D}}(x,t+\delta^{\sharp})
 \;\;\mbox{for all}\;\;(x,t)\in \Omega\times(0,T^*-\delta^\sharp].
\end{eqnarray*}
   Then by the above observation and  \eqref{yu-9-10-2}, we have
    \begin{eqnarray}\label{wang5-20-2-9}
    \nu_1+\varepsilon<u^*(x,t+\delta^\sharp)
    <\nu_2-\varepsilon\;\;\mbox{for all}\;\;(x,t)\in \mathcal{Q}_{\delta^\sharp}^R.
\end{eqnarray}
   Meanwhile,  by \eqref{yu-9-11-9} and \eqref{yu-9-11-11}, we see
   \begin{eqnarray}\label{wang5-20-2-10}
   -\frac{\varepsilon}{2}\leq f_{\delta^{\sharp}}(x,t)
    \leq \frac{\varepsilon}{2}\;\;\mbox{for a.e.}\;\;(x,t)\in \mathcal{Q}_{\delta^\sharp}^R.
\end{eqnarray}
   By \eqref{wang5-20-2-9}, \eqref{wang5-20-2-10} and \eqref{yu-9-11-12}, we obtain \eqref{wang-5-19-2-9},
   which implies \eqref{yu-9-11-13}.

   Finally,    it follows from the measurability of $w^\sharp$ and \eqref{yu-9-11-13} that $w^\sharp\in \mathcal{U}_{ad}$.
   Moreover, by  \eqref{yu-9-11-2}, \eqref{yu-9-11-3}, \eqref{yu-9-11-10} and \eqref{yu-9-11-12}, we obtain
    \eqref{yu-9-11-1}.
   This completes the proof.
       \end{proof}

\end{document}